\newcommand{\C}{\mathbb{C}}
\newcommand{\K}{\mathbb{K}}
\newcommand{\LL}{\mathbb{L}}
\newcommand{\PP}{\mathbb{P}}
\newcommand{\Q}{\mathbb{Q}}
\newcommand{\Z}{\mathbb{Z}}
\newcommand{\y}{{\scriptstyle\mathscr{Y}}}
\newcommand{\ua}{\underline{a}}
\newcommand{\res}{\mathrm{res}}
\newtheorem{proposition}{Proposition}[section]
\newtheorem*{proposition*}{Proposition}
\newtheorem{lemma}[proposition]{Lemma}
\newtheorem{theorem}[proposition]{Theorem}
\newtheorem{corollary}[proposition]{Corollary}
\theoremstyle{definition}
\newtheorem{example}[proposition]{Example}
\newtheorem{definition}[proposition]{Definition}
\newtheorem{remark}[proposition]{Remark}
\def\l@subsection{\@tocline{2}{0pt}{2.5pc}{5pc}{}}
\numberwithin{equation}{section}
\numberwithin{figure}{section}
\begin{document}

\title{Notes on Three Formulas of Abel}

\author{David A. Cox}

\address{Department of Mathematics \& Statistics, Amherst College, Amherst, MA 01002 USA}
\email{dacox@amherst.edu}

\subjclass[2010]{Primary 01A55, 14K20; Secondary 14Q05, 68W30}

\begin{abstract}
These notes explore three amazing formulas proved by Abel in his 1826 Paris memoir on what we now call Abelian integrals.  We discuss the first two formulas from the point of view of symbolic computation and explain their connection to residues and partial fractions.  The third formula arises from the first two and is related to the genus and lattice points in the Newton polygon.
\end{abstract}
%\date{\today}

%\thanks{I would like to thank the late Harold Edwards for introducing me to Abel's Paris memoir.  I am grateful to Alicia Dickenstein for her encouragement and helpful comments, and I also thank the referee for many useful suggestions.}
\thanks{I  thank the late Harold Edwards for introducing me to Abel's Paris memoir.  I am grateful to Alicia Dickenstein for her encouragement, and I thank the reviewer for many useful suggestions.}

\maketitle

\tableofcontents

%% Section 1
\section{Abel's Paris Memoir}
\label{SecAbelMem}

In October of 1826, Niels Henrik Abel submitted \emph{M\'emoire sur une propri\'et\'e g\'en\'erale d'une classe tr\`es \'etendue de fonctions transcendantes} \cite{A1} to the Paris academy.\footnote{See \cite{DelC} for a facsimile reproduction of Abel's original manuscript.}  This amazing paper contains a number of results that fall under the rubric ``Abel's Theorem.''   The purpose of these notes is to explore one of the more underappreciated versions of Abel's Theorem and its connection to the genus of the associated Riemann surface. %(see Section \ref{AbelHolo} for a careful definition).

Abel starts with an equation written the form
\[
0 = p_0 + p_1 y + p_2 y^2 + \cdots + p_{n-1}y^{n-1} + y^n = \chi y.
\]
This is equation (1) on p.\ 146\footnote{Page numbers for Abel's papers refer to the versions in his collected works \cite[Vol.\ 1]{A5}.} of \cite{A1}.  The $p_i = p_i(x)$ are polynomials in $x$ (``fonctions rationnelles et enti\`eres d'une m\^{e}me quantit\'e variable $x$'').  Abel does not specify a field.  We will work over a computable\footnote{\emph{Computable} means that elements of $K$ have explicit representations where field operations can be done algorithmically.  See the discussion of \emph{effective field} in \cite[pp.\ 5-6]{Mora}} subfield $\K \subseteq \C$, so that $\chi y = \chi(x,y) \in \K[x,y]$.   We assume that $\chi(x,y)$ is irreducible over $\C[x,y]$. 

Given a rational function $f(x,y)$ with coefficients in $\K$, Abel's main object of interest is the indefinite integral
\begin{equation}
\label{AbInt}
\int\! f(x,y)\, dx,
\end{equation}
where $y$ is satisfies $\chi(x,y) = 0$.  These are called \emph{Abelian integrals} because of Abel's memoir.  In 1832, Jacobi used the term ``transcendentes Abelianis''  \cite{Jacobi},\footnote{Abel's memoir was not available when Jacobi wrote \cite{Jacobi} in 1832, but he was aware of its existence because of a footnote in the paper \cite{A3} Abel published in 1828.  Jacobi wrote Legendre in 1829 \cite{Jacobi4} to inquire about the status of Abel's memoir, and in a footnote on p.\ 397 of \cite{Jacobi}, he urged the Paris Academy to publish the memoir as a way to honor Abel, who died in 1829.} and in 1847, he wrote ``\emph{Abel}schen integrale"  \cite{Jacobi3}. In 1857, Riemann used ``Abel’schen Functionen"  \cite{Riemann}.  Abel certainly thought of \eqref{AbInt} as a function.  We will say more about this in Section \ref{Limits}.

In equation (2) on p.\ 147 of \cite{A1}, Abel introduces the auxiliary polynomial
\[
\theta y = q_0 + q_1 y + q_2 y^2 + \cdots + q_{n-1}y^{n-1}.
\]
Among the polynomials $q_i = q_i(x)$, a ``certain nombre" of the coefficients of $x$ are assumed to be indeterminates, which we write as $\ua = a_1, a_2, a_3, \dots$ (Abel used $a,a',a'',\dots$).   Thus $\theta(x,y,\ua) \in \K[x,y,\ua]$.  For simplicity, we often write this as $\theta(x,y)$ or $\theta(y)$.  

In Section 2 of \cite{A1}, Abel states his first main result, which concerns the differential
\begin{equation}
\label{dvdef}
dv = \sum_{i=1}^\mu f(x_i,y_i)\, dx_i
\end{equation}
and the corresponding sum of integrals
\[
v = \sum_{i=1}^\mu \int \! f(x_i,y_i)\, dx_i
\]
(see \cite[(8) on p. 148 and (12) on p.\ 149]{A1}).  The sum is over all solutions 
\[
(x_1,y_1),\dots,(x_\mu,y_\mu)
\] 
of $\chi(x,y) = \theta(x,y,\ua) = 0$ for which $dx_i \ne 0$.  We will see in Section \ref{APW} that $x_i$ is an algebraic function of $\ua = a_1,a_2,\dots$, so that $dx_i$ is defined.  Thus the sum is over solutions where $x_i$ is a nonconstant function of $\ua$ and hence transcendental over $\K$.   

Abel claims the following: 
\begin{itemize}
\item[(1)] $dv$ is a rational differential of $\ua$ \cite[p.\ 148]{A1}.
\item[(2)] $v$ is a ``fonction alg\'ebrique et logarithmique'' of $\ua$ \cite[p.\ 149]{A1}.   
\end{itemize}
He gives two proofs that $dv$ is rational in \cite{A1}. The first uses the theory of symmetric functions and is sketched on \cite[p.\ 148]{A1}, with more details in \cite{A4}.  

Abel's second proof for $dv$ is completely different and is a major focus of these notes.  The proof occupies Section 4 of his memoir.  Abel begins as follows:
\begin{quote}
We showed above how one can always form the rational differential $dv$; but as the indicated method will be in general very long, and for functions a little complicated, almost impractical, I am going to give another, by which one will obtain immediately the expression of the function $v$ in all possible cases. \cite[p.\ 150]{A1}
\end{quote}
In equation (36) of \cite[p.\ 158]{A1}, he derives the astonishing formula for $dv$:\footnote{\label{f3} In Abel's collected works \cite[Vol.\ 2, pp.\ 295--296]{A5}, Sylow observes that the formula \eqref{Abel36} is not quite correct. See Sections \ref{SecRoweAbel} and \ref{rewrite} for more details.} 
\begin{equation}
\label{Abel36}
dv = -\text{\large$\varPi$} \frac{F_2x}{\theta_1x} \sum \frac{f_1(x,y)}{\chi'y} \frac{\delta \theta y}{\theta y}
+ \sum{}\rule{0pt}{12pt}'\nu \frac{d^{\nu-1}}{dx^{\nu-1}} \bigg\{\frac{F_2x}{\theta_1^{(\nu)}x} \sum \frac{f_1(x,y)}{\chi'y} \frac{\delta \theta y}{\theta y}\bigg\}.
\end{equation}
There is a \emph{lot} of notation going on here, all of which will be eventually explained. The functions $F_2(x), \theta_1(x), \theta^{(\nu)}_1(x), f_1(x,y)$ that appear in \eqref{Abel36} are polynomials with coefficients in $\overline{\K}$, the algebraic closure of $\K$.  Also, $\chi'y = \frac{\partial}{\partial y} \chi(x,y)$ and $\delta \theta y$ is the differential of $\theta(x,y,\ua)$ with respect to the indeterminates $\ua = a_1,a_2,\dots$, i.e.,
\begin{equation}
\label{deltathetadef}
\delta \theta y = \delta \theta(x, y,\ua) = \tfrac{\partial\theta}{\partial a_1}(x, y,\ua)\,da_1 +  \tfrac{\partial\theta}{\partial a_2}(x, y,\ua)\,da_2 + \cdots. 
\end{equation}
Once we understand the symbol {\large $\varPi$} and the sums involved in \eqref{Abel36}, we will see that $dv$ is indeed a rational differential in $\ua$.

The only place where the indeterminates $\ua$ appear in \eqref{Abel36} is in $\theta y$ and $\delta\theta y$, which leads to an integrated version of the formula:
\begin{equation}
\label{Abel37}
v = C-\text{\large$\varPi$} \frac{F_2x}{\theta_1x} \sum \frac{f_1(x,y)}{\chi'y}  \log \theta y
+ \sum{}\rule{0pt}{12pt}'\nu \frac{d^{\nu-1}}{dx^{\nu-1}} \bigg\{\frac{F_2x}{\theta_1^{(\nu)}x} \sum \frac{f_1(x,y)}{\chi'y}  \log \theta y\bigg\},
\end{equation}
where $C$ is a constant of integration (see \cite[(37) on p.\ 159]{A1}).  This is ``the expression of the function $v$'' promised by Abel in the above quote.

After proving \eqref{Abel37}, Abel asks when $v$ reduces to a constant. This puts conditions on the polynomial $f_1(x,y)$.  He defines $\gamma$ to be the ``number of arbitrary constants'' in $f_1(x,y)$ and gives the following formula for $\gamma$ \cite[(62) on p.\ 168]{A1}:
\begin{equation}
\label{Abel62}
\gamma = \begin{cases} 
\displaystyle n'\mu' \Big( \frac{n' m' -1}2\Big) + n''\mu''\Big( n' m' +  \frac{n'' m''-1}2\Big)\ + &\\[5pt]
\displaystyle n'''\mu'''\Big( n' m' + n'' m'' +  \frac{n''' m'''-1}2\Big) + \cdots  + &\\[5pt]
\displaystyle n^{(\varepsilon)}\mu^{(\varepsilon)}\Big( n' m' + \cdots + n^{(\varepsilon-1)} m^{(\varepsilon - 1)} +  \frac{n^{(\varepsilon)} m^{(\varepsilon-1)}}2\Big)&\\[5pt]
\displaystyle -\ \frac{n'(m'+1)}2 - \cdots - \frac{n^{(\varepsilon)} (m^{(\varepsilon)} +1)}2 + 1.
\end{cases}
\end{equation}
Again, there is more notation to untangle. From a modern point of view, this formula for $\gamma$ counts lattice points in the interior of the Newton polygon of $\chi(x,y)$ and, as we will explain carefully in Section \ref{AbelHolo}, is  closely related to the genus.

\medskip

Equations \eqref{Abel36}, \eqref{Abel37}, \eqref{Abel62} are the  ``three formulas'' in the title of these notes.

\medskip

After the Paris Academy finally published Abel's memoir in 1841,\footnote{The fascinating story of Abel's manuscript---including the way it was handled by the Paris Academy and why it wasn't in his 1839 collected works---is described in the biographies of Abel by Ore \cite[Chapter 20]{Ore} and Stubhaug \cite[pp.\ 548--552]{Stub}. See also Del Centina \cite[pp.\ 87--103]{DelC}.}  various people tried to understand Abel's proofs,
%and simplify Abel's derivation of \eqref{Abel36} and \eqref{Abel37}, 
including Boole  \cite{B} in 1857, Rowe \cite{R} in 1881, Brill and Noether \cite{BN} in 1894, Baker \cite{BakerBook} in 1897, and Forsyth \cite{F} in 1918.   Modern treatments of Abel's Theorem that mention \eqref{Abel36} and \eqref{Abel37} include  Houzel \cite{Houzel} and Kleiman \cite{K1}, and Cooke \cite{Cooke} considers the hyperelliptic case treated by Abel \cite{A3} in 1828. The last three references discuss many other articles about Abel's Theorem.

One thing to keep in mind is that Abel's Paris memoir contains several results that share the name ``Abel's Theorem.''  
The version involving \eqref{Abel36} and \eqref{Abel37} is what Kleiman calls \emph{Abel's Elementary Function Theorem} in \cite{K1}.  Other versions of Abel's Theorem will appear briefly in Section \ref{OtherAbelThms} and are described nicely in \cite{K1}.  In many ways, these other versions constitute the real depth of Abel's memoir and explain why people studied it so carefully.  Nevertheless, our hope  is to convince you that Abel's formulas \eqref{Abel36} and \eqref{Abel37}, along with \eqref{Abel62}, represent some significant mathematics in terms of both how they are proved and what they imply.

%% Section 2
\section{Our Approach to Abel's Formulas}
\label{SecApproach}

Formulas \eqref{Abel36} and \eqref{Abel37} are a bit overwhelming at first glance.  For the modern reader, however, they are relatively easy to understand in terms of residues, as will be explained in Sections \ref{AMV} and \ref{AvMV}.  One goal of these notes is to prove these formulas and understand what they mean in terms of symbolic computation and residues.    

But rather than just jump into the proof, let us say a bit more about how people have reacted to Abel's memoir.
We begin with two quotes from the papers of Boole and Rowe:
\begin{quote}
{\small\scshape Boole}:\ As presented in the writings of {\scshape Abel}, and of those who immediately followed in his steps, the doctrine of the comparison of transcendents is repulsive from the complexity of the formul\ae\ in which its general conclusions are embodied. \cite[p.\ 746]{B}

\smallskip

\noindent {\small\scshape Rowe}:\ The generality and power of this memoir are well known, but its form is not attractive. \cite[p.\ 713]{R}

\end{quote}
One reason for saying ``repulsive" and ``not attractive'' is that Abel's memoir begins with
\eqref{Abel36} and \eqref{Abel37}.  The complexity of these formulas is indeed overwhelming, not to mention \eqref{Abel62} and the many other formulas that appear in \cite{A1}.  Abel's memoir is not easy reading.

For some commentators, another challenge of \cite{A1} is that while it implicitly involves key ideas in the theory of algebraic curves, there ``is no geometry in Abel’s work'' in the words of Kleiman \cite[p.\ 403]{K1}.  Although contour integrals are hinted at in \cite{A1} (see Section \ref{Limits}), his methods are almost entirely algebraic.
%Cooke  \cite[p.\ 412]{Cooke} notes  that ``absolutely no knowledge of contour integrals or the residue theorem is even remotely hinted at'' in Abel's work,\footnote{This quote refers refers to Abel's 1828 Crelle paper \cite{A3} on the hyperelliptic case.   Cooke's comments apply equally well to Abel's Paris memoir \cite{A1} since both papers use the same methods. }
%even though ``their implicit presence is obvious to a modern mathematician.” 
In his 1857 treatise \emph{Theorie der Abel’schen Functionen} \cite{Riemann}, Riemann studied the integrals \eqref{AbInt} from the point of view of Riemann surfaces, though as noted by Edwards, Riemann's approach 
\begin{quote}
\dots\ gave no insight into Abel's \emph{formulas} which, for Abel at least, represented the true substance of the memoir.  After Riemann, the theory of Abelian functions became firmly embedded within the burgeoning theory of functions of a complex variable. \cite[p.\ 202]{Edwards}
\end{quote}
For us, things like contour integrals and Riemann surfaces are less relevant since Abel's approach is closer to symbolic computation than to geometry or complex analysis.  

Abel's first proof that $dv$ is a rational differential uses the theory of symmetric functions.  Although Dieudonn\'e admires Abel's result, he also says that the proof is “hardly more than an exercise in the theory of the symmetric functions of the roots of a polynomial” \cite[p.\ 20]{Dieudonne}.

These notes will argue that the symbolic aspects of Abel's formulas for $dv$ and $v$ go much deeper.   In the sections that follow, we will prove modern versions of Abel's formulas \eqref{Abel36} and \eqref{Abel37} in a way that emphasizes the role of symbolic computation.  In fact, one way to think about them is that they are not just formulas; rather, they are \emph{symbolic algorithms embedded within formulas}.

Although we will not provide detailed implementations of the algorithms involved (that is not the goal of these notes), Sections \ref{Abeldv} and \ref{Abelv} will present complete proofs of modern versions \eqref{Abel36} and \eqref{Abel37}, along with informal descriptions of the algorithms and examples of what the computations look like in practice.  
Section~\ref{AbelHolo} will describe how Abel used his formulas to study questions related to holomorphic differentials and the genus, though he did not think in these terms.  We will give a complete proof of \eqref{Abel62} and describe its relation to counting lattice points and the genus.  

Section \ref{OtherAbelThms} will explain how Abel's memoir was inspired by the classic addition theorems for elliptic integrals. 
His generalizations of these results are encapsulated in versions of Abel's Theorem in which the genus plays a prominent role.  Finally, Appendix~\ref{AbelPf} is for readers interested in how Abel proved  \eqref{Abel36} and \eqref{Abel37} and how Boole and Rowe used residues to recast his proof into a more understandable form.

This will not be easy---some hard work will be required---but at the end, you will see that Abel's complicated formulas \eqref{Abel36}, \eqref{Abel37}, and \eqref{Abel62} are both beautiful and fully understandable.   As we approach the 200th anniversary of the submission of his memoir \cite{A1} to the Paris Academy, it is important to give Abel's formulas the respect they deserve.

%This hard work will make it clear that Abel's complicated formulas \eqref{Abel36}, \eqref{Abel37}, and \eqref{Abel62} are both beautiful and fully understandable.   As we approach the 200th anniversary of the submission of his memoir \cite{A1} to the Paris Academy, it is important to give Abel's formulas the respect they deserve.

%and we wrap things up in Section \ref{Concl}. %For interested readers, Appendix~\ref{rewrite} contains a detailed discussion of 

%% Section 3
\section{Abel's Formula for \emph{dv}}
\label{Abeldv}

The goal of this section is to state and prove a modern version of Abel's formula for $dv = \sum_{i=1}^\mu f(x_i,y_i)\, dx_i$. We also give examples of how to use the formula.%  Examples will be given in Section \ref{AEx}.

\subsection{A Modern Version of Abel's Formula for \emph{dv}}
\label{AMV}

Abel's original version of his formula \eqref{Abel36} for $dv$ uses lots of notation and states that
\begin{equation}
\label{ATModern0}
dv = -\text{\large$\varPi$} \frac{F_2x}{\theta_1x} \sum \frac{f_1(x,y)}{\chi'y} \frac{\delta \theta y}{\theta y}
+ \sum{}\rule{0pt}{12pt}'\nu \frac{d^{\nu-1}}{dx^{\nu-1}} \bigg\{\frac{F_2x}{\theta_1^{(\nu)}x} \sum \frac{f_1(x,y)}{\chi'y} \frac{\delta \theta y}{\theta y}\bigg\}.
\end{equation}
Recast in modern terms, Abel's formula expresses $dv$ as a sum of residues
\begin{equation}
\label{ATModern}
dv = \sum_{j=1}^\alpha \res_{\beta_j}\hskip-1pt \Big(\sum_{\ell=1}^n f(x,y^{(\ell)}) \hskip1pt\frac{\delta \theta(y^{(\ell)})}{\theta(y^{(\ell)})}\Big) -  \text{\large$\varPi$}\Big( \sum_{\ell=1}^n f(x,y^{(\ell)}) \hskip1pt\frac{\delta \theta(y^{(\ell)})}{\theta(y^{(\ell)})} \Big),
\end{equation}
where  we find it more convenient to put the {\large$\varPi$} term at the end.  In \eqref{ATModern}, the $y^{(\ell)}$ are the roots of $\chi(x,y)$, the operator ${-}\text{\large$\varPi$}$ can be regarded as a residue at $\infty$, and the $\beta_j$ are the roots of an explicitly constructed polynomial $f_2(x)F_0(x)$.  All of this will be explained in Section \ref{APW}.  Recall that $\delta\theta(y^{(\ell)})$ was defined earlier in \eqref{deltathetadef}.

Note that the same sum over the roots $y^{(\ell)}$ appears twice in \eqref{ATModern}.  By the theory of symmetric functions, this sum is rational in $x$, which we write as
\begin{equation}
\label{Sxdef}
S(x) =  \sum_{\ell=1}^n f(x,y^{(\ell)})\hskip1pt \frac{\delta \theta(y^{(\ell)})}{\theta(y^{(\ell)})}.
\end{equation}
The proof of Lemma \ref{AbelLem} will describe Abel's clever method for computing $S(x)$.  Using $S(x)$, we can rewrite \eqref{ATModern} as
\begin{equation}
\label{ATModernS}
dv = \sum_{j=1}^\alpha \res_{\beta_j}\hskip-1pt(S(x))  - \text{\large$\varPi$}(S(x)).
\end{equation}
We will prove  \eqref{ATModern} and \eqref{ATModernS} in Section \ref{APf}.
The examples in Section \ref{AEx} will show that Abel's formula  \eqref{ATModernS} is easy to understand and
straightforward to compute.

However, getting from Abel's original formula \eqref{ATModern0} to the modern version \eqref{ATModern}  takes some work.  There is a \emph{lot} of notation to unwind, as will be explained in Section \ref{rewrite}.

\subsection{Preliminary Work}
\label{APW}
Here we explain the notation needed to understand \eqref{ATModern} and prove some preliminary results needed for the proof.

Recall that $\chi(y) = \chi(x,y) \in \K[x,y]$ and $\theta(y) = \theta(x,y,\ua) \in \K[x,y,\ua]$, where $\ua =a_1, a_2, \dots$.   Set $\chi'(y) = \frac{\partial\chi}{\partial y}(x,y)$.  In \eqref{ATModern} and \eqref{Sxdef}, we have the roots $y^{(1)},\dots,y^{(n)}$ of $\chi(x,y)$, where $\chi(x,y)$ is regarded as a polynomial in $y$ with coefficients in $\K[x]$.  
%We also have the roots $y^{(1)},\dots,y^{(n)}$ of $\chi(x,y) \in \K[x,y]$ that appear in \eqref{ATModern} and \eqref{Sxdef}.  
These roots can be constructed by standard methods of symbolic computation (see, for example, \cite[Chapter 5]{Mora}), meaning that they are computable elements of a finite extension of $\K(x)$ that is a splitting field of the irreducible polynomial $\chi(x,y) \in\K(x)[y]$.  

\subsubsection*{Solutions of $\chi(x,y) =\theta(x,y,\underline{a}) = 0$}  To find the solutions, we first eliminate $y$ using a resultant.
As above, $y^{(1)},\dots,y^{(n)}$ are the roots of $\chi(x,y) \in \K[x,y]$.  Since $\chi(x,y)$ is monic in $y$, the Poisson formula for the resultant (see \cite[(1.4)]{UAG}) implies that
\[
\mathrm{Res}(\chi(x,y),\theta(x,y,\ua),y) = \prod_{\ell=1}^n \theta(x,y^{(\ell)},\ua) \in \K[x,\ua].
\]
Following Abel \cite[(3) on p.\ 147]{A1}, we write this as
\[
r(x) = r(x,\ua) =  \prod_{\ell=1}^n \theta(x,y^{(\ell)}).
\]

Since $\chi(x,y)$ is monic in $y$, properties of the resultant imply that the roots of $r(x)$ are precisely the $x$-coordinates of solutions of the system $\chi(x,y) = \theta(x,y) = 0$.   Thus, if $(x_0,y_0)$ is a solution of the system, then $r(x_0,\ua) = 0$, so that $x_0$ is an algebraic function of $\ua$.  %As we will see below, this guarantees that $dx_0$ is defined.  
Furthermore, all roots of $r(x)$ arise in this way.

However, recall that the sum $\sum_{i=1}^\mu f(x_i,y_i)\,dx_i$ uses only solutions $(x_i,y_i)$ for which %$dx_i \ne 0$, i.e.,
$x_i$ is transcendental over $\K$.   This leads to the factorization
\begin{equation}
\label{Efact}
r(x,\ua) = F_0(x) F(x,\ua),
\end{equation}
where all roots of $F_0(x)$ (resp.\ no roots of $F(x) = F(x,\ua)$) are algebraic over $\K$. This gives the polynomial $F_0(x)$ that appears in the discussion following \eqref{ATModern}.  Note that the $x_i$'s appearing in $dv = \sum_{i=1}^\mu f(x_i,y_i)\, dx_i$ are precisely the roots of $F(x)$.

 Here is an example where $F_0(x)$ has positive degree.

\begin{example}
\label{Rowe718Ex}
For the field $\K = \Q$, let $\chi(x,y) = y^2 - (1 + x^4)$ and $\theta(x,y,a_1,a_2) = y - (1+a_1 x + a_2 x^2)$.  Then $y^{(1)}, y^{(2)} = \pm \sqrt{1+x^4}$, so that 
\begin{align*}
r(x,a_1,a_2) &= \theta(x,y^{(1)},a_1,a_2) \theta(x,y^{(2)},a_1,a_2) = (1+a_1 x + a_2 x^2)^2 - (1+x^4)\\
&= x\big(2 a_1 + (a_1^2+2a_2) x + 2 a_1 a_2 x^2 + (a_2^2-1) x^3\big).
\end{align*}
Thus
\[
%\begin{align*}
F_0(x) = x,\ %&= x\\
F(x,a_1,a_2) = (a_2^2 -1)x^3 + 2a_1a_2 x^2 + (a_1^2 + 2a_2)x + 2a_1.\eqno\Diamond
%\end{align*}
\]
\end{example}

Abel knew that $F_0(x)$ can acquire roots when the indeterminates $\ua = a_1,a_2,\dots$ are allowed to satisfy linear relations.  Rowe's 1881 paper on Abel's formula includes the following example  \cite[p.\ 718]{R}.

\begin{example}
\label{Rowe718Ex2}
Let $\K$, $\chi(x,y)$ and $\theta(x,y,a_1,a_2)$ be as in Example \ref{Rowe718Ex}, and assume that
 $(1,\sqrt2)$ is a solution of $\chi(x,y) = \theta(x,y,a_1,a_2) = 0$.  Then $\theta(1,\sqrt2,a_1,a_2) = 0$, which implies that
\[
a_1 + a_2 = \sqrt2 - 1.
\]
Also, $r(1,a_1,a_2) = 0$ since $(1,\sqrt2)$ is a solution.  This implies that $r(x,a_1,a_2) = F_0(x) F(x,a_1,a_2)$, where
\[
%\begin{align*}
F_0(x) = x(x-1), \
F(x,a_1,a_2) = (a_2^2 -1)x^2 + (2a_1a_2 + a_2^2 -1)x -2a_1.
%\end{align*}
\]
(This corrects some minor errors in \cite[p.\ 718]{R}.)\hfill$\Diamond$
\end{example}

Up to now, we have regarded the $a_i$ as being algebraically independent.  To allow for the behavior illustrated in Example \ref{Rowe718Ex2}, we will assume that a subset of $\ua = a_1,a_2,\dots$ is algebraically independent and that the remaining $a_i$'s are linearly related to the independent ones.  These relations may require a finite extension of $\K$, as  in Example \ref{Rowe718Ex2}.  Replacing $\K$ with a suitable finite extension, we may assume that the relations have coefficients in $\K$ and that $F_0(x) \in \K[x]$, $F(x) \in \K[x,\ua]$.

We next consider $F(x) = F(x,\ua)$ in more detail.  Since the roots of $r(x)$ are the $x$-coordinates of the solutions of  $\chi(x,y) = \theta(x,y) = 0$, the definition of $F(x)$ shows that its roots are $x_1,\dots,x_\mu$, where $(x_i,y_i)$, $i = 1,\dots,\mu$, are the solutions with $x_i$ transcendental over $\K$.  We will make the following important assumption
\begin{equation}
\label{Assume1}
\text{$F(x)$ and $F'(x)$ have no common factors, i.e., $F'(x_i) \ne 0$ for $i = 1,\dots,\mu$}.
\end{equation}
(Here, $F'(x) = \frac{\partial}{\partial x} F(x,\ua)$.) This  is needed for the proof given in Section \ref{APf}.  

Abel implicitly assumes \eqref{Assume1} at the beginning of his proof since $F'(x_i)$ appears in the denominator in multiple places \cite[p.\ 151]{A1}.\footnote{In Abel's 1828 Crelle paper \cite{A3} on hyperelliptic Abelian integrals (where he mentions his Paris memoir), his {\it Th\'eor\`eme} II allows $F(x)$ to have multiple roots, though no proof is given.}   
Rowe explicitly assumes  \eqref{Assume1} in his 1881 treatment of Abel's formula \cite[p.\ 720]{R}.
%the partial fraction expansion of $1/F(\alpha)$ given in \cite[(28) on p.\ 154]{A1}, which is valid only when all roots of $F(x)$ have multiplicity one.

Assumption \eqref{Assume1} has the following nice consequence.

\begin{proposition}
\label{yjunique}
Let $(x_i,y_i)$, $i = 1,\dots,\mu$, be as above.  If \eqref{Assume1} holds, then %for all $i$, 
\[
y-y_i = \gcd(\chi(x_i,y), \theta(x_i,y)), \quad i = 1,\dots,\mu.
\]
Furthermore, $x_1,\dots,x_\mu$ are distinct, and for all $i$, $y_i$ can be expressed as a rational function in $x_i$ with coefficients in $\K(\ua)$.
\end{proposition}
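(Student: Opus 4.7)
My plan is to establish the three claims in the order stated, using two main tools: the factorization $r(x)=F_0(x)F(x)$ together with the squarefreeness of $F$ furnished by \eqref{Assume1}, and the Euclidean algorithm for $\chi,\theta$ regarded as polynomials in $y$ over $\K(\ua)(x)$. The distinctness of $x_1,\ldots,x_\mu$ is immediate: $\gcd(F,F')=1$ in $\K(\ua)[x]$ says $F(x)$ is squarefree, so its roots are pairwise distinct.

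For the gcd claim, I would reason via the multiplicity of $x_i$ as a root of $r(x)$. The factorization $r(x)=F_0(x)F(x)$ separates the $\K$-algebraic roots (in $F_0$) from the $\K$-transcendental roots (in $F$), so $F_0$ and $F$ share no roots; in particular $F_0(x_i)\ne 0$, and combined with $F'(x_i)\ne 0$ this makes $x_i$ a simple root of $r(x)$. The standard identification
\[
\mathrm{mult}_{x_i}\bigl(r(x)\bigr) \;=\; \sum_{(x_i,y_0)\in V(\chi)\cap V(\theta)} \mathrm{mult}_{(x_i,y_0)}(\chi,\theta)
\]
then forces the right-hand sum to equal $1$. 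Since every summand is a positive integer, there is a unique common root $y_0=y_i$ of $\chi(x_i,y)$ and $\theta(x_i,y)$, and the local intersection multiplicity there is $1$. The inequality
\[
\mathrm{mult}_{(x_i,y_i)}(\chi,\theta) \;\ge\; \deg_y \gcd\bigl(\chi(x_i,y),\theta(x_i,y)\bigr),
\]
obtained by quotienting the local ring $\overline{\K(\ua)}[[x-x_i,y-y_i]]/(\chi,\theta)$ further by $(x-x_i)$ and applying Weierstrass preparation to $\chi(x_i,y)$ and $\theta(x_i,y)$, caps the gcd degree at $1$; together with $(y-y_i)\mid\gcd$ this gives $\gcd(\chi(x_i,y),\theta(x_i,y))=y-y_i$.

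For the rational expression of $y_i$, I would run the Euclidean algorithm on $\chi$ and $\theta$ as polynomials in $y$ over $\K(\ua)(x)$. Because $\chi$ is irreducible over $\C[x,y]$ and does not involve $\ua$, it is irreducible in $\K(\ua)(x)[y]$; since $\deg_y\theta<\deg_y\chi$, the generic gcd is $1$. Reading off the subresultant of $y$-degree $1$ from the remainder sequence gives a polynomial $A(x)y+B(x)$ with $A,B\in\K(\ua)[x]$ whose specialization at $x=x_i$ must be a nonzero scalar multiple of $y-y_i$ (necessarily $A(x_i)\ne 0$, else the specialized gcd would have degree $0$ or $\ge 2$); this yields $y_i=-B(x_i)/A(x_i)\in\K(\ua)(x_i)$. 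The principal obstacle is the sharp gcd equality in the middle step: ramification of $V(\chi)\to\{x\text{-line}\}$ above $x_i$ could a priori produce a higher-multiplicity common factor of $\chi(x_i,y)$ and $\theta(x_i,y)$ at $y_i$, but any such behavior would push the intersection multiplicity past $1$, contradicting the simple root of $r(x)$ that \eqref{Assume1} guarantees.
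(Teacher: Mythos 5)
Your argument is correct, but it reaches the two nontrivial claims by a genuinely different route than the paper. For the gcd, the paper never passes through local intersection multiplicities at the solution itself: it first uses the resultant--multiplicity identification (citing \cite[Theorem 1.1]{DC}, exactly your ``standard identification,'' which needs $\chi$ monic in $y$) only to write $F(x)=c\prod_i(x-x_i)^{m_i}$ and conclude from \eqref{Assume1} that the $x_i$ are distinct; it then observes that the roots $y_i^{(1)},\dots,y_i^{(n)}$ of $\chi(x_i,y)$ are distinct because the discriminant lies in $\K[x]$ and $x_i$ is transcendental over $\K$, so a second root $y_i^{(\ell)}$ of $\theta(x_i,y)$ would create a second solution over $x_i$, contradicting distinctness; the gcd is then read off from the factorization of $\chi(x_i,y)$ into distinct linear factors. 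Your version instead extracts from the simple root of $r$ at $x_i$ that the total intersection multiplicity over $x_i$ is $1$ and caps the gcd degree by the local multiplicity; that is fine, with two small caveats: your ``immediate'' distinctness of the $x_i$ is not purely squarefreeness of $F$ (two distinct solutions sharing an $x$-coordinate must be ruled out via the same multiplicity identity, which you do invoke), and the inequality should be read as bounding the multiplicity of $y_i$ in the gcd, which suffices only because you have already shown $y_i$ is the unique common root. For the last claim the paper simply runs the Euclidean algorithm on the specialized pair $\chi(x_i,y),\theta(x_i,y)$ over the field $\K(x_i,\ua)$, so no specialization theorem is needed; your route through the generic first subresultant $A(x)y+B(x)$ requires the subresultant specialization property (which you should cite --- it is the shape-lemma circle of ideas in \cite{DC}), but in exchange it yields a single uniform formula $y_i=-B(x_i)/A(x_i)$ valid for all $i$, whereas the paper's argument is more elementary and closer to Abel's own ``important principle.''
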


\begin{proof}
We first show that  $x_1,\dots,x_\mu$ are distinct.
Let $m_i \ge 1$ be the multiplicity of $(x_i,y_i)$ as a solution of $\chi(x,y) = \theta(x,y) = 0$.  Since $\chi(x,y)$ is monic in $y$, Theorem 1.1 of \cite{DC} implies that the resultant $r(x)$ is, up to a constant, the product of $(x-\gamma_1)^{m_\gamma}$ for all solutions $\gamma = (\gamma_1,\gamma_2)$ of the system, where $m_\gamma$ is again the multiplicity ($\chi(x,y)$ being monic in $y$ guarantees that there are no solutions at $\infty$ in the sense of \cite{DC}).
Since $F(x)$ collects the factors of $r(x)$ where $\gamma_1$ is transcendental over $\K$, it follows that
\[
F(x) = c \prod_{i=1}^\mu (x-x_i)^{m_i}
\]
for some constant $c$.  If $x_i = x_j$ for $i\ne j$, $F(x)$ has a root of multiplicity $m_i + m_j \ge 2$, which contradicts \eqref{Assume1}.  Thus $x_1,\dots,x_\mu$ must be distinct.
%However, all roots of $F(x)$ have multiplicity one by \eqref{Assume1}, which forces $x_1,\dots,x_\mu$ to be distinct.
 
We now compute the $\gcd$.   Let $y_i = y_i^{(1)},\dots, y_i^{(n)}$ be the roots of $\chi(x_i,y) = 0$.  The $y_i^{(\ell)}$ are distinct since the discriminant of $\chi(x,y) \in \K[x,y]$ with respect to $y$ lies in $\K[x]$ and $x_i$ is transcendental over $\K$.  If $\theta(x_i,y_i^{(\ell)}) = 0$ for some $\ell \ge 2$, then since $\chi(x_i,y_i^{(\ell)}) = 0$, we would have solutions $(x_i,y_i) \ne (x_i,y_i^{(\ell)})$ of $\chi(x,y) = \theta(x,y) = 0$.  But we  showed above that the solutions have distinct $x$-coordinates.  Thus $\theta(x_i,y_i^{(\ell)}) \ne 0$ for $\ell \ge 2$, which implies
\[
\gcd(\chi(x_i,y),\theta(x_i,y)) = \gcd((y-y_i^{(1)})\cdots (y-y_i^{(n)}),\theta(x_i,y)) = y-y_i^{(1)} = y-y_i
\]
since the factors $y-y_i^{(1)},\dots y-y_i^{(n)}$ are distinct and only the first divides $\theta(x_i,y)$.  The final assertion of the proposition follows by computing the $\gcd$ via the Euclidean Algorithm over the field $\K(x_i,\ua)$ (remember that $\theta(x_i,y) \in \K[x_i,y,\ua]$).
\end{proof}

Abel \cite[p.\ 148]{A1} says that the value of $y_i$ can be expressed as a rational function of $x_i$ and $\ua$.  According to Cooke \cite[pp.\ 407--408]{Cooke}, Abel was using the ``important principle'' that when $y_i$ is the only solution of $\chi(x_i,y) = \theta(x_i,y) =0$, the gcd of $\chi(x_i,y)$ and $\theta(x_i,y)$ gives the desired representation of $y_i$.  Proposition \ref{yjunique} justifies this principle in our situation.

\subsubsection*{The Rational Function $f(x,y)$} Next consider the rational function $f(x,y)$ that appears in $dv = \sum_{i=1}^\mu f(x_i,y_i)\, dx_i$.  Since $\chi(x_i,y_i) = 0$, we only need $f(x,y)$ modulo $\chi(x,y)$.  Following Abel, we represent $f(x,y)$ in the form
\begin{equation}
\label{fxyform}
f(x,y) = \frac{f_1(x,y)}{f_2(x) \chi'(y)} 
\end{equation}
for polynomials $f_1(x,y)$ and $f_2(x)$ with coefficients in $\K$.   This defines the polynomial $f_2(x)$ mentioned in the discussion following \eqref{ATModern}.  The presence of $\chi'(y)$ in the denominator will be useful.  

Abel proves \eqref{fxyform} in \cite[pp.\ 151--152]{A1}.  For a modern proof, consider the extension $\K(x) \subseteq  \K(x)[y]/\langle \chi(x,y)\rangle$ of degree $n$.  Then, regarding $\chi'(y) f(x,y)$ as an element of the larger field, we can write 
\[
\chi'(y) f(x,y) = b_0(x) + b_1(x) y + \cdots + b_{n-1}(x) y^{n-1},\quad b_0(x),\dots,b_{n-1}(x) \in \K(x).
\]
The desired representation follows after dividing by $\chi'(y)$ and letting $f_2(x) \in \K[x]$ be the least common denominator of the rational functions $b_0(x),\dots,b_{n-1}(x)$.  

\subsubsection*{Differentials and Differential Fields}
%\subsubsection*{Differentials, Differential Fields, and Logarithms} 
%Abel and his contemporaries used informal notions of differentials and logarithms.
Abel and his contemporaries used informal notions of differentials.  We will use the modern language of differential forms and differential fields to make this precise.  

For our computable field $\K$, we have the field of rational functions $\K(x,\ua) = \K(x, a_1, a_2, \dots)$, where $h \in \K(x,\ua)$ gives the rational $1$-form
\[
dh = \frac{\partial h}{\partial x} \hskip1pt dx+ \frac{\partial h}{\partial a_1} \hskip1pt da_1 +\frac{\partial h}{\partial a_2} \hskip1pt da_2 + \cdots. 
\]
By the theory of differential fields (see \cite{Bronstein}), the commuting derivations $\frac{\partial}{\partial x}$ and $\frac{\partial}{\partial a_i}$ extend uniquely to any algebraic extension of $\K(x,\ua)$ by \cite[Theorem 3.2.2]{Bronstein} and still commute.  For example, if $(x_0,y_0)$ is a solution of $\chi(x,y) = \theta(x,y,\ua) = 0$, then $x_0$ is algebraic over $\K(\ua)$ by the paragraph preceding \eqref{Efact}, so that $\frac{\partial x_0}{\partial a_i}$ and hence $dx_0$ are all defined.  In particular, we now have a precise meaning for Abel's differential form
\[
dv = \sum_{i=1}^\mu f(x_i,y_i)\, dx_i
\]
from \eqref{dvdef}.  We will give an explicit formula for $dx_i$ in \eqref{dxj}.  

\subsubsection*{Residues}
We recall some standard facts about residues and define the operator {\large$\varPi$} used in \eqref{ATModern0} and \eqref{ATModern}.   We begin with residues.  As is well known, if $g(x)$ has a meromorphic Laurent series expansion in powers of $x - \beta$, say
\begin{equation}
\label{gxexp}
g(x) = \frac{a_{-M}}{(x-\beta)^M} + \cdots + \frac{a_{-1}}{x-\beta} + a_0 + a_1 (x-\beta) + a_2 (x-\beta)^2 +\cdots,
\end{equation}
then
\begin{equation}
\label{resbetadef}
\res_\beta(g(x)) = a_{-1} = \text{coefficient of $\frac{1}{x-\beta}$ in the expansion at $x = \beta$}. 
\end{equation}
Turning to $x = \infty$, suppose that $g(x)$ has a meromorphic Laurent series expansion in powers of $u=1/x$, say
\begin{align*}
g(x) = g(1/u) &= \frac{b_{-M}}{u^M} + \cdots  + \frac{b_{-1}}{u} + b_0 + b_1u + b_2 u^2 + \cdots\\
&= b_{-M} x^M + \cdots + b_{-1}x + b_0 + \frac{b_1}{x} + \frac{b_2}{x^2} + \cdots.
\end{align*}
This is an expansion in descending powers of $x$.  Following Abel \cite[p.\ 155]{A1}, we define
\begin{equation}
\label{C1xdef}
\text{\large$\varPi$}(g(x)) = b_1 = \text{coefficient of $\dfrac1x$ in the expansion of $g(x)$ at $x = \infty$.}
\end{equation}

In terms of differential forms, let $\omega = g(x)\,dx$.  For $\beta$ as above, 
\[
\res_\beta(\omega) = \res_\beta(g(x)) = a_{-1} 
\]
since $x-\beta$ is a local coordinate at $x = \beta$ and $d(x-\beta) = dx$.  On the other hand, $u = 1/x$ is a local coordinate at $x =\infty$, so that 
\[
\omega = g(1/u)\, d(1/u) = g(1/u)\cdot \frac{-1}{\ u^2}\, du.
\]
Hence
\[
\res_\infty(\omega) = \text{coefficient of $u^{-1}$ in the expansion of $-u^{-2}g(1/u)$},
\]
which easily seen to equal $-b_1$ in the above expansion of $g(x)$ in $u = 1/x$. Thus 
\begin{equation}
\label{PiRes}
\res_\infty(\omega) = \res_\infty(g(x)\,dx) = -b_1 = -\text{\large$\varPi$}(g(x)).
\end{equation}
This justifies our comment that ${-}\text{\large$\varPi$}$ can be regarded as a residue at $\infty$.

We  next recall  some well-known methods for computing residues.  First consider $g(x)$ as in \eqref{gxexp} and let $\nu \ge M$.  Then
\begin{equation}
\label{computeres}
\res_\beta(g(x)) = \frac1{(\nu-1)!} \frac{d^{\nu-1}}{dx^{\nu-1}}\big( (x-\beta)^\nu g(x)\big)\Big|_{x=\beta}.
\end{equation}
This formula will enable us to interpret various terms in \eqref{Abel36} as residues, though Abel did not think in terms of residues.   Notice that in \eqref{computeres}, $\nu$ is greater than or equal to the order of $\beta$ as a pole of $g(x)$.

Another useful formula is that if $h(x)$ is a polynomial with $\beta$ as a simple root and $g(x)$ has an expansion in nonnegative powers of $x-\beta$, then
\begin{equation}
\label{computeres1}
\begin{aligned}
\res_\beta\Big(\frac{g(x)}{h(x)}\Big) &= \res_\beta\Big(\frac{g(\beta) + g'(\beta)(x-\beta) + \cdots}{h'(\beta)(x-\beta) + \cdots}\Big)\\ &=
 \res_\beta\Big(\frac{g(\beta)}{h'(\beta)} \cdot \frac1{x-\beta} + \cdots\Big) =
\frac{g(\beta)}{h'(\beta)}.
\end{aligned}
\end{equation}

\subsubsection*{The Global Residue Theorem for $\PP^1$} This important result states that for any rational function $\varphi(x)$, the sum of the residues of the differential form $\varphi(x)\,dx$ is zero, i.e., 
\begin{equation}
\label{GRT}
\sum_{\beta \in \PP^1} \res_\beta(\varphi(x)\,dx) = 0.
\end{equation}
Serre gives simple proof using partial fractions in \cite[Lemma 3 on p.\ 21]{Serre}. The Global Residue Theorem will play a key role in our proof of Abel's formula.  Here is an easy corollary of \eqref{GRT}.

\begin{corollary}
\label{BooleCor}
If $H(x) = a_m x^m + \cdots$ is a nonconstant polynomial with simple roots $x_1,\dots,x_m$
and $H_1(x)$ is any polynomial of degree $\le m-2$, then
\[
\sum_{j=1}^m \frac{x_j^{m-1}}{H'(x_j)} = \frac{1}{a_m}  \ \,\text{and}\  \,\, \sum_{j=1}^m \frac{H_1(x_j)}{H'(x_j)} = 0.
\]
\end{corollary}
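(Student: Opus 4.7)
The plan is to obtain both identities as immediate consequences of the Global Residue Theorem \eqref{GRT} applied to carefully chosen rational differentials on $\PP^1$, using the simple-pole residue formula \eqref{computeres1} at the finite poles and the definition \eqref{PiRes} of $\res_\infty$.

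For the first identity, I would apply \eqref{GRT} to $\varphi(x) = x^{m-1}/H(x)$. Since the roots $x_1,\dots,x_m$ of $H(x)$ are simple, formula \eqref{computeres1} gives
\[
\res_{x_j}\!\Big(\tfrac{x^{m-1}}{H(x)}\,dx\Big) = \tfrac{x_j^{m-1}}{H'(x_j)}.
\]
For the residue at $\infty$, I would expand $\varphi(x)$ in powers of $1/x$: since $H(x) = a_m x^m(1 + O(1/x))$, we get $\varphi(x) = \frac{1}{a_m\,x}\bigl(1 + O(1/x)\bigr)$, so the coefficient of $1/x$ is $1/a_m$, that is $\text{\large$\varPi$}(\varphi(x)) = 1/a_m$. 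By \eqref{PiRes}, $\res_\infty(\varphi(x)\,dx) = -1/a_m$. Summing all residues and invoking \eqref{GRT} yields $\sum_{j=1}^m \frac{x_j^{m-1}}{H'(x_j)} = \frac{1}{a_m}$.

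For the second identity, I would apply \eqref{GRT} to $\varphi(x) = H_1(x)/H(x)$ with $\deg H_1 \le m-2$. The same argument via \eqref{computeres1} gives $\res_{x_j}(\varphi(x)\,dx) = H_1(x_j)/H'(x_j)$. At infinity, $\varphi(x) = O(1/x^2)$, so its Laurent expansion in $1/x$ has no $1/x$ term, hence $\text{\large$\varPi$}(\varphi(x)) = 0$ and $\res_\infty(\varphi(x)\,dx) = 0$. The Global Residue Theorem then gives $\sum_{j=1}^m \frac{H_1(x_j)}{H'(x_j)} = 0$.

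There is no real obstacle here; the only point that requires a bit of care is tracking the sign convention in \eqref{PiRes} and confirming the expansions at $\infty$, in particular that the hypothesis $\deg H_1 \le m-2$ is exactly what is needed to kill the coefficient of $1/x$ in the second case. The structure of this little argument is, of course, a preview of how \eqref{GRT} will be used to tame Abel's formula in the main proof.
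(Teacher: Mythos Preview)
Your proof is correct and follows essentially the same approach as the paper: apply the Global Residue Theorem to $x^{m-1}/H(x)$ (and to $H_1(x)/H(x)$), compute the finite residues via \eqref{computeres1}, and identify the contribution at $\infty$ via \eqref{PiRes} and the expansion in $1/x$. The paper in fact omits the second assertion as ``equally easy,'' so your write-up is slightly more explicit there.
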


\begin{proof}
The only possible poles of the differential form $\omega = \dfrac{x^{m-1}}{H(x)}\hskip.5pt dx$ are $x_1,\dots,x_m$ and $\infty$, so by \eqref{GRT},
\[
0 = \sum_{j=1}^m \res_{x_j}(\omega) + \res_\infty(\omega) = \sum_{j=1}^m \frac{x_j^{m-1}}{H'(x_j)} - \text{\large$\varPi$}\Big(\frac{x^{m-1}}{H(x)}\Big),
 \]
where the second equality uses \eqref{computeres1} and \eqref{PiRes}.  Then we are done since by the definition of {\large$\varPi$},
\[
\text{\large$\varPi$}\Big(\frac{x^{m-1}}{H(x)}\Big) = \text{\large$\varPi$}\Big(\frac{x^{m-1}}{a_m x^m + \cdots }\Big) = \frac{1}{a_m}.
\]
The proof of the second assertion is equally easy and hence omitted.
\end{proof}

The formulas in Corollary \ref{BooleCor} were well known in Abel's time.  He calls them ``formules connues''  \cite[p.\ 153]{A1}.   We will say more about Corollary \ref{BooleCor} in Section~\ref{APf}.

\subsection{Proof of Abel's Formula for \emph{dv}}
\label{APf}
Here is what we have so far:
\begin{itemize}
\item The polynomials $\chi(x,y) \in \K[x,y]$ and $\theta(x,y,\ua) \in \K[x,y,\ua]$.
\item The roots $y^{(\ell)}$, $\ell = 1,\dots,n$, of $\chi(x,y)$.
\item The solutions $(x_i,y_i)$, $i = 1,\dots,\,\mu$, of $\chi(x,y) = \theta(x,y,\ua) = 0$ with $dx_i \ne 0$.
\item The resultant $r(x) = \prod_{\ell=1}^n \theta(x,y^{(\ell)},\ua) = F_0(x) F(x,\ua)$, where $F_0(x) \in \K[x]$ and $F(x) = F(x,\ua) \in \K[x,\ua]$.
\item The rational function $f(x,y) = \dfrac{f_1(x,y)}{f_2(x)\chi'(y)}$.
\item The residue operators $\res_\beta$ \strut and {\large$\varPi$} from \eqref{resbetadef} and \eqref{C1xdef}.
\end{itemize}
The goal is to prove the version of Abel's formula for $dv = \sum_{i=1}^n f(x_i,y_i)\,dx_i$ stated in \eqref{ATModernS}, namely
\[
dv = \sum_{j=1}^\alpha \res_{\beta_j}\hskip-1pt(S(x)) - \text{\large$\varPi$}(S(x)),
\]
where $\beta_1,\dots,\beta_\alpha$ are the roots of $f_2(x)F_0(x)$ and
\[
S(x) =  \sum_{\ell=1}^n f(x,y^{(\ell)})\hskip1pt \frac{\delta \theta(y^{(\ell)})}{\theta(y^{(\ell)})}.
\]

The first step of the proof is to interpret $f(x_i,y_i)\,dx_i$ as a residue.

\begin{lemma}
\label{fxiyiSxi}
With the above setup, assume \eqref{Assume1}.   Then
\[
f(x_i,y_i)\,dx_i = -\res_{x_i}(S(x)),\quad i = 1,\dots,\mu.
\]
\end{lemma}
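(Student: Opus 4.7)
The plan is to isolate the unique summand of $S(x)$ that has a pole at $x=x_i$, compute its residue by \eqref{computeres1}, and then match the resulting expression against a formula for $dx_i$ obtained by implicit differentiation of the defining system $\chi=\theta=0$.

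First, I would reorder the roots so that $y^{(1)}(x)$ is the analytic branch of $\chi(x,y)=0$ satisfying $y^{(1)}(x_i)=y_i$. Proposition~\ref{yjunique} says that $y_i$ is the \emph{only} common root of $\chi(x_i,y)$ and $\theta(x_i,y)$, so for $\ell\ge 2$ the value $\theta(x_i,y^{(\ell)}(x_i))$ is nonzero and the corresponding summands of $S(x)$ are regular at $x_i$. Writing $\psi(x):=\theta(x,y^{(1)}(x),\ua)$, the only possibly singular summand of $S(x)$ at $x_i$ is therefore $f(x,y^{(1)})\,\delta\theta(y^{(1)})/\psi(x)$. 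The factor $f(x,y^{(1)})$ is itself regular at $x_i$: $\chi'(y_i)\ne 0$ because the $n$ values $y^{(\ell)}(x_i)$ are distinct (as shown inside the proof of Proposition~\ref{yjunique}), and $f_2(x_i)\ne 0$ since $x_i$ is transcendental over $\K$ while $f_2(x)\in\K[x]$.

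Second, I would show that $\psi$ has a \emph{simple} zero at $x_i$. By assumption~\eqref{Assume1} combined with the multiplicity argument in the proof of Proposition~\ref{yjunique}, $x_i$ is a root of $F(x)$, and hence of $r(x)=F_0(x)F(x)=\prod_\ell\theta(x,y^{(\ell)}(x))$, of multiplicity exactly one. Since only the $\ell=1$ factor of this product vanishes at $x_i$, the full multiplicity is carried by $\psi$, forcing $\psi'(x_i)\ne 0$. Formula~\eqref{computeres1} then gives
\[
\res_{x_i}(S(x)) \;=\; \frac{f(x_i,y_i)\,\delta\theta(x_i,y_i,\ua)}{\psi'(x_i)}.
\]

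Third, I would compute $dx_i$ by implicit differentiation. Writing $\chi_x,\chi_y,\theta_x,\theta_y$ for the relevant partial derivatives, applying $d$ (in the $\ua$ variables) to $\chi(x_i,y_i)=0$ gives $\chi_y\,dy_i=-\chi_x\,dx_i$ at $(x_i,y_i)$, since $\chi$ is independent of $\ua$; substituting into the differential of $\theta(x_i,y_i,\ua)=0$ produces
\[
\bigl(\theta_x\chi_y-\theta_y\chi_x\bigr)(x_i,y_i)\,dx_i \;=\; -\,\chi_y(x_i,y_i)\,\delta\theta(x_i,y_i,\ua).
\]
On the other hand, differentiating $\chi(x,y^{(1)}(x))\equiv 0$ yields $y^{(1)\prime}(x_i)=-\chi_x/\chi_y$ at $(x_i,y_i)$, and the chain rule applied to $\psi(x)=\theta(x,y^{(1)}(x),\ua)$ then gives $\psi'(x_i)=(\theta_x\chi_y-\theta_y\chi_x)/\chi_y$ at $(x_i,y_i)$. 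Dividing the previous display by $\chi_y(x_i,y_i)$ therefore yields $dx_i=-\delta\theta(x_i,y_i,\ua)/\psi'(x_i)$, and combining with the residue formula above produces $\res_{x_i}(S(x))=-f(x_i,y_i)\,dx_i$.

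The main obstacle I foresee is conceptual rather than computational: recognizing that the denominator $\psi'(x_i)$ coming from the residue calculation on the algebraic branch $y^{(1)}(x)$ coincides with the Jacobian-style factor $(\theta_x\chi_y-\theta_y\chi_x)/\chi_y$ arising from implicit differentiation of the system $\chi=\theta=0$. Once these two quantities are identified, the single minus sign in the statement of the lemma is exactly the one introduced by the implicit-differentiation substitution, and the rest of the argument is bookkeeping.
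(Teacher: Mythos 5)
Your argument is correct, and its route differs from the paper's in both of its main steps. The paper (following Abel) computes $dx_i$ by applying $\delta$ to the resultant relation $F(x_i,\ua)=0$, giving $dx_i=-\delta r(x_i)/(F_0(x_i)F'(x_i))$, and then expands $\delta r$ via the product $r(x)=\prod_\ell\theta(x,y^{(\ell)})$; the residue is then read off after writing $S(x)$ over the common denominator $F_0(x)F(x)$, so that everything stays symmetric in the roots $y^{(\ell)}$ and rational in $x$ over $\K[\ua]$, and \eqref{computeres1} is applied at $x_i$ as a simple root of $F$. You instead work branchwise: you isolate the single summand of $S(x)$ that is singular at $x_i$, show that $\psi(x)=\theta(x,y^{(1)}(x),\ua)$ has a simple zero there because $r$ does and the other factors do not vanish, and obtain $dx_i$ by implicit differentiation of the system $\chi=\theta=0$, identifying the Jacobian factor $(\theta_x\chi_y-\theta_y\chi_x)/\chi_y$ with $\psi'(x_i)$. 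The two are reconciled by $r'(x_i)=F_0(x_i)F'(x_i)=\psi'(x_i)\prod_{k\ge2}\theta(x_i,y_i^{(k)})$, so your residue and the paper's agree. What the paper's version buys is that it never needs local expansions of the individual (non-symmetric) branches $y^{(\ell)}(x)$ at the transcendental point $x_i$, staying entirely within the symbolic framework Abel used; what yours buys is transparency --- the simple pole, the single surviving branch, and the minus sign all appear for visibly geometric reasons --- at the modest cost of having to note (as you do) that $\chi'(y_i)\ne 0$ makes the branch $y^{(1)}$ unramified at $x_i$ so that $\psi$ and the chain-rule computation make sense there, with the derivations handled by the paper's differential-field setup.
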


\begin{remark}
For $\res_{x_i}(S(x))$, we regard the $da_i$ appearing in $\delta \theta(y^{(\ell)})$ as new variables, so that $\res_{x_i}(S(x))$ is computed by regarding $S(x)$ as a rational function.  
\end{remark}

\begin{proof}
We follow Abel, who begins by noting that $F(x_i,\ua) = 0$ implies 
\[
0 = d F(x_i,\ua) = F'(x_i,\ua) \,dx_i + \delta F(x_i,\ua),
\]
which can be written more simply as
\[
0 =  F'(x_i) \,dx_i + \delta F(x_i).
\]
Since $r(x) = F_0(x) F(x)$ and $F_0(x) \in \K[x]$ does not involve $\ua$, multiplying by $F_0(x)$ gives
\[
0 = F_0(x_i) F'(x_i) \,dx_i + \delta r(x_i),
\]
so that
\begin{equation}
\label{dxj}
dx_i =  -\frac{\delta r(x_i)}{F_0(x_i) F'(x_i)}.
\end{equation}
Note that $F_0(x_i) \ne 0$ by the definition of $F_0(x)$, and $F'(x_i) \ne 0$ by assumption \eqref{Assume1}.  Thus the denominator is nonvanishing.  

For the numerator of \eqref{dxj}, note that $r(x) = \prod_{\ell=1}^n \theta(x,y^{(\ell)})$ implies
\[
\delta r(x) = \sum_{\ell = 1}^n \Big(\prod_{k\ne\ell}  \theta(x,y^{(k)}) \Big)\hskip1pt \delta\theta(x,y^{(\ell)}).
\]
Evaluating this at $x_i, y_i= y_i^{(1)}, y_i^{(2)},\dots,y_i^{(n)}$ gives
\[
\delta r(x_i) = \sum_{\ell = 1}^n \Big(\prod_{k\ne\ell}  \theta(x_i,y_i^{(k)}) \Big)\hskip1pt \delta\theta(x_i,y_i^{(\ell)})
=\Big(\prod_{k=2}^n  \theta(x_i,y_i^{(k)}) \Big)\hskip1pt \delta \theta(x_i,y_i^{(1)}),
\]
where the second equality follows from the fact that $\theta(x_i,y_i^{(1)}) = \theta(x_i,y_i) =0$ is a factor of $\prod_{k\ne\ell}  \theta(x_i,y_i^{(k)}) $ for $\ell \ge 2$. Hence \eqref{dxj} can be written in the form
\[
dx_i = -\frac{1}{F_0(x_i) F'(x_i)}
\Big(\prod_{k=2}^n  \theta(x_i,y_i^{(k)}) \Big)
\hskip1pt\delta \theta(x_i,y_i^{(1)}).
\]
This is the third display on \cite[p.\ 151]{A1}.  Now multiply both sides by our rational function $f(x_i,y_i) = f(x_i,y_i^{(1)})$ to obtain
\begin{equation}
\label{fxiyidxi}
\begin{aligned}
f(x_i,y_i)\,dx_i &= -\frac{1}{F_0(x_i) F'(x_i)} f(x_i,y_i^{(1)}) 
\Big(\prod_{k=2}^n  \theta(x_i,y_i^{(k)}) \Big)\hskip1pt 
\delta \theta(x_i,y_i^{(1)})\\
&= -\frac{1}{F_0(x_i) F'(x_i)} \sum_{\ell = 1}^n f(x_i,y_i^{(\ell)}) 
\Big(\prod_{k\ne\ell}  \theta(x_i,y_i^{(k)}) \Big)\hskip1pt 
\delta \theta(x_i,y_i^{(\ell)}).
\end{aligned}
\end{equation}
As above, the last line follows since $\prod_{k\ne\ell}  \theta(x_i,y_i^{(k)}) = 0$ for $\ell \ge 2$.  

To relate this to $S(x)$, note that
\begin{equation}
\label{Sxformula}
\begin{aligned}
S(x) &=  \sum_{\ell=1}^n f(x,y^{(\ell)})\hskip1pt \cdot   \frac{\prod_{k\ne\ell}  \theta(x,y^{(k)})}{\prod_{k\ne\ell}  \theta(x,y^{(k)})}  \cdot \frac{\delta \theta(x,y^{(\ell)})}{\theta(x,y^{(\ell)})} \\
&= \frac{1}{F_0(x)F(x)} \sum_{\ell = 1}^n f(x,y^{(\ell)}) \Big(\prod_{k\ne\ell}  \theta(x,y^{(k)}) \Big)\hskip1pt \delta \theta(x,y^{(\ell)})\\
&= \frac{\displaystyle  \frac{1}{F_0(x)} \sum_{\ell = 1}^n f(x,y^{(\ell)}) \Big(\prod_{k\ne\ell}  \theta(x,y^{(k)}) \Big)\hskip1pt \delta \theta(x,y^{(\ell)})}{F(x)}
\end{aligned}
\end{equation}
Since $x_i$ is a simple root of $F(x)$ and the numerator of the above expression for $S(x)$ is defined at $x_i$, 
%(with $y^{(\ell)}$ replaced with $y_i^{(\ell)}$), 
\eqref{computeres1} implies that
\[
\res_{x_i}(S(x)) = \frac{\displaystyle  \frac{1}{F_0(x_i)} \sum_{\ell = 1}^n f(x_i,y_i^{(\ell)}) \Big(\prod_{k\ne\ell}  \theta(x_i,y_i^{(k)}) \Big)\hskip1pt \delta \theta(x_i,y_i^{(\ell)})}{F'(x_i)} = -f(x_i,y_i)\,dx_i,
\]
where the last equality follows from \eqref{fxiyidxi}.  This proves the lemma.
\end{proof}

Adding up \eqref{fxiyiSxi} for $i = 1,\dots,\mu$, we obtain
\begin{equation}
\label{SumR}
dv = \sum_{i=1}^\mu f(x_i,y_i)\,dx_i = -\sum_{i=1}^\mu \res_{x_i}(S(x)).
\end{equation}
The basic idea is that Abel's formula \eqref{ATModernS} follows from \eqref{SumR} and the Global Residue Theorem.  This requires knowing the poles of $S(x)$, which leads to the next lemma.

\begin{lemma}
\label{AbelLem}
There is a polynomial differential form $R(x)$ such that
\[
S(x) = \frac{R(x)}{f_2(x) F_0(x) F(x)},
\]
where $f_2(x)$ comes from the representation $f(x,y) = \dfrac{f_1(x,y)}{f_2(x)\chi'(y)}$.
\end{lemma}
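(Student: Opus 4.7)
The plan is to begin from the expression for $S(x)$ already obtained in the proof of Lemma \ref{fxiyiSxi}. Substituting $f(x,y) = f_1(x,y)/(f_2(x)\chi'(y))$ into \eqref{Sxformula} gives immediately
\[
S(x) = \frac{1}{f_2(x)\,F_0(x)\,F(x)} \sum_{\ell=1}^n \frac{f_1(x,y^{(\ell)})}{\chi'(y^{(\ell)})}\,\Big(\prod_{k\ne\ell} \theta(x,y^{(k)})\Big)\,\delta\theta(x,y^{(\ell)}),
\]
so I would take the numerator sum as the candidate $R(x)$ and spend the rest of the proof showing it is a polynomial $1$-form in $x$ with coefficients in $\K[x,\ua]$. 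Symmetry in the roots $y^{(\ell)}$ places the sum in $\K(x,\ua)$, but this by itself does not rule out denominators coming from the factors $\chi'(y^{(\ell)})$.

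My strategy is to linearize the awkward product $\prod_{k\ne\ell}\theta(x,y^{(k)})$ using a Bezout identity. Since $\chi(x,y)$ is irreducible in $\C[x,y]$ and $\theta(x,y,\ua)$ has $y$-degree at most $n-1$, the polynomials $\chi$ and $\theta$ are coprime in $\K(x,\ua)[y]$, and the standard fact that $r(x,\ua)$ lies in the ideal $(\chi,\theta) \subset \K[x,y,\ua]$ supplies $A,B \in \K[x,y,\ua]$ with
\[
A(x,y)\,\chi(x,y) + B(x,y)\,\theta(x,y) = r(x,\ua).
\]
Evaluating at $y = y^{(\ell)}$, where $\chi$ vanishes, gives $B(x,y^{(\ell)})\,\theta(x,y^{(\ell)}) = r(x)$, so that $B(x,y^{(\ell)}) = \prod_{k\ne\ell}\theta(x,y^{(k)})$. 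Collecting the $da_i$ terms, one finds $R(x) = \sum_i R_i(x)\,da_i$ with
\[
R_i(x) = \sum_{\ell=1}^n \frac{P_i(y^{(\ell)})}{\chi'(y^{(\ell)})}, \qquad P_i(y) := f_1(x,y)\,B(x,y)\,\frac{\partial\theta}{\partial a_i}(x,y) \in \K[x,y,\ua].
\]

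The final step is the classical Lagrange identity, another of Abel's ``formules connues'' in the spirit of Corollary \ref{BooleCor}: for any $P(y) \in \K[x,\ua][y]$,
\[
\sum_{\ell=1}^n \frac{P(y^{(\ell)})}{\chi'(y^{(\ell)})} = \text{coefficient of } y^{n-1} \text{ in } P(y) \bmod \chi(x,y),
\]
which is immediate from Lagrange interpolation applied to $P(y) \bmod \chi(y)$. Because $\chi$ is monic in $y$, the reduction stays inside $\K[x,\ua][y]$, making the right-hand side a polynomial in $x$ and $\ua$. Hence each $R_i(x) \in \K[x,\ua]$, and $R(x)$ is a polynomial differential form as asserted.

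The principal obstacle is the first substitution: one must recognize that a Bezout identity is needed in order to rewrite $\prod_{k\ne\ell}\theta(x,y^{(k)})$ as the evaluation $B(x,y^{(\ell)})$ of a single polynomial in $y$, after which the Lagrange identity finishes matters essentially mechanically. An alternative (more elementary but subtler) route would be to clear denominators across all $\ell$ and argue by antisymmetric/symmetric considerations that the resulting symmetric numerator is divisible by $\mathrm{disc}(\chi)$; the Bezout route is cleaner and better matches Abel's own style of computation.
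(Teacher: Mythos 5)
Your proposal is correct, and its skeleton matches the paper's: you start from the same expression \eqref{Sxformula}, turn each summand into a polynomial in $y^{(\ell)}$ with coefficients in $\K[x,\ua]$ divided by $\chi'(y^{(\ell)})$, and finish with the classical identity that $\sum_\ell P(y^{(\ell)})/\chi'(y^{(\ell)})$ equals the coefficient of $y^{n-1}$ in $P \bmod \chi$. The differences lie in the two supporting steps. For the product $\prod_{k\ne\ell}\theta(x,y^{(k)})$, the paper simply observes (symmetric functions, $\chi$ monic in $y$) that it lies in $\K[x,y^{(\ell)},\ua]$, then reduces $\Omega_\ell$ mod $\chi$, splits off the $\big(y^{(\ell)}\big)^{n-1}$-coefficient $R(x)$, and invokes the two ``formules connues'' of Corollary \ref{BooleCor}, which it proves by residues; you instead produce an explicit polynomial $B \in \K[x,y,\ua]$ via the resultant Bezout identity $A\chi + B\theta = r$ and use $B(x,y^{(\ell)}) = \prod_{k\ne\ell}\theta(x,y^{(k)})$, then prove the sum identity by Lagrange interpolation (legitimate, since irreducibility of $\chi$ in characteristic zero gives distinct roots $y^{(\ell)}$). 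Your division by $\theta(x,y^{(\ell)})$ needs $r \ne 0$, i.e.\ coprimality of $\chi$ and $\theta$ over $\K(x,\ua)$, but this is harmless: it is already implicit in the definition of $S(x)$ and in \eqref{Sxformula}. What your route buys is a single computable cofactor $B$ (Sylvester matrix) and no need for the paper's automorphism argument that $R$, $R^{(1)}$ are independent of $\ell$, since you sum over $\ell$ from the outset; what the paper's route buys is the explicit description \eqref{Sx1} of $R(x)$ as the top coefficient of $\Omega_1$, which is what the examples in Section \ref{AEx} actually compute. Both arguments identify the same polynomial form $R(x)$, so the lemma follows as stated.
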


\begin{proof}
If we combine the second line of \eqref{Sxformula} with the formula for $f(x,y)$ in the statement of the lemma, we obtain
\[
S(x) =  \frac{\displaystyle \sum_{\ell = 1}^n \frac{f_1(x,y^{(\ell)})}{\chi'(y^{(\ell)})}\, \prod_{k\ne \ell}\theta(x,y^{(k)}) \,\delta \theta(x,y^{(\ell)})}{f_2(x) F_0(x) F(x)}.
\]
To analyze this, fix $\ell$ and consider the differential form in $da_1, da_2, \dots$ defined by
\[
\Omega_\ell =   f_1(x,y^{(\ell)})\, \prod_{k\ne \ell}\theta(x,y^{(k)}) \,\delta \theta(x,y^{(\ell)}).
\]
Then
\begin{equation}
\label{SOmega}
S(x) = \frac{\displaystyle \sum_{\ell = 1}^n \frac{\Omega_\ell}{\chi'(y^{(\ell)})}}{f_2(x) F_0(x) F(x)}.
\end{equation}

Note that $\prod_{k\ne \ell}\theta(x,y^{(k)})$ is a polynomial in $\K[x,y^{(\ell)},\ua]$ since $y^{(1)},\dots,y^{(n)}$ are the roots of $\chi(x,y) \in \K[x,y]$, which is monic in $y$.  It follows that $\Omega_\ell$ can be written as a differential form with coefficients in $\K[x,y^{(\ell)},\ua]$, and since $\chi(x,y^{(\ell)}) = 0$ and $\chi(x,y)$ has degree $n$ in $y$, we can assume that $\Omega_\ell$ has degree at most $n-1$ in $y^{(\ell)}$, still with coefficients in $\K[x,y^{(\ell)},\ua]$, again because $\chi(x,y)$ is monic in $y$. Thus we can write
\[
\Omega_\ell = R(x) \big(y^{(\ell)}\big)^{n-1} + R^{(1)}(x,y^{(\ell)}),
\]
where $R(x)$ has coefficients in $\K[x,\ua]$ and $R^{(1)}(x,y^{(\ell)})$ has coefficients in $\K[x,y^{(\ell)}),\ua]$ and has degree $\le n-2$ in $y^{(\ell)}$.  The automorphism taking $y^{(\ell)}$ to $y^{(\ell')}$ shows that $R$ and $R^{(1)}$ do not depend on $\ell$. 

The notation $R$ and $R^{(1)}$ is due to Abel, as is the following analysis of the numerator of \eqref{SOmega} (see \cite[pp.\ 152--153]{A1}):  
\begin{align*}
\sum_{\ell=1}^n \frac{\Omega_\ell}{\chi'(y^{(\ell)})} &= \sum_{\ell=1}^n \frac{R(x) \big(y^{(\ell)}\big)^{n-1} + R^{(1)}(x,y^{(\ell)})}{\chi'(y^{(\ell)})}\\
&=R(x) \sum_{\ell=1}^n \frac{\big(y^{(\ell)}\big)^{n-1}}{\chi'(y^{(\ell)})} + \sum_{\ell=1}^n \frac{R^{(1)}(x,y^{(\ell)})}{\chi'(y^{(\ell)})}.
\end{align*}
Using the standard formulas (``formules connues'' \cite[p.\ 153]{A1})
\begin{equation}
\label{AbelLemPf}
\begin{aligned}
\sum_{\ell=1}^n \frac{\big(y^{(\ell)}\big)^{n-1}}{\chi'(y^{(\ell)})} &= 1\\
\sum_{\ell=1}^n \frac{R^{(1)}(x,y^{(\ell)})}{\chi'(y^{(\ell)})}  &= 0 \ \ (\text{since $R^{(1)}(x,Y)$ has degree $\le n-2$ in $Y$),}
\end{aligned}
\end{equation}
Abel concludes that
\[
\sum_{\ell=1}^n \frac{\Omega_\ell}{\chi'(y^{(\ell)})}= R(x) \cdot 1 + 0 = R(x).
\]
The lemma follows by substituting this into \eqref{SOmega}.
\end{proof} 

\begin{remark}
The  formulas \eqref{AbelLemPf} are proved in Corollary \ref{BooleCor}.   There are two interesting things to note here:
\begin{itemize}
\item The algorithm for computing the global residue of a rational function described in \cite[pp.\ 7--8]{CD} is \emph{precisely} the method used by Abel in 1826.
\item The vanishing result on the second line of \eqref{AbelLemPf} is noted by Jacobi in his 1835 paper \cite{Jacobi2}, which begins ``Of the theorems which are given in the elements of algebra, there is scarcely anything more useful in the most diverse equations'' than this vanishing result.  In \cite{Jacobi2}, Jacobi proves a similar vanishing for two equations in two unknowns, a result now called the \emph{Euler-Jacobi Vanishing Theorem}.  See \cite[1.5.5]{CD} for details and applications.
\end{itemize}
\end{remark}

We can now prove Abel's formula \eqref{ATModernS} for $dv$.

\begin{theorem}
\label{AThmdv}
With the above setup, assume \eqref{Assume1}.  Then
\[
dv = \sum_{i=1}^\mu f(x_i,y_i)\,dx_i = \sum_{j=1}^\alpha \res_{\beta_j}\hskip-1pt(S(x)) - \text{\large$\varPi$}(S(x)),
\]
where $\beta_1,\dots,\beta_\alpha$ are the roots of $f_2(x)F_0(x)$.
\end{theorem}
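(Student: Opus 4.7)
The plan is to assemble the theorem from the two lemmas and the Global Residue Theorem, with no new calculation beyond bookkeeping. First I would sum the identity of Lemma \ref{fxiyiSxi} over $i=1,\dots,\mu$ to obtain
\[
dv \;=\; \sum_{i=1}^\mu f(x_i,y_i)\,dx_i \;=\; -\sum_{i=1}^\mu \res_{x_i}(S(x)),
\]
which is just \eqref{SumR}. So the theorem reduces to the identity
\[
-\sum_{i=1}^\mu \res_{x_i}(S(x)) \;=\; \sum_{j=1}^\alpha \res_{\beta_j}(S(x)) - \text{\large$\varPi$}(S(x)).
\]

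Next I would use Lemma \ref{AbelLem} to locate the finite poles of $S(x)$. Writing $S(x) = R(x)/(f_2(x)F_0(x)F(x))$, every finite pole must be a root of $f_2(x)F_0(x)F(x)$. By the definition of the $\beta_j$ and the factorization $F(x) = c\prod_{i=1}^\mu(x-x_i)$ established in the proof of Proposition \ref{yjunique}, these roots are contained in $\{\beta_1,\dots,\beta_\alpha\}\cup\{x_1,\dots,x_\mu\}$. Here the disjointness is not essential: if some $x_i$ happens to coincide with a $\beta_j$ the residue is still a single well-defined number counted once, so the two sums on the right never double-count a point.

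Now I would apply the Global Residue Theorem \eqref{GRT}. Since $S(x)$ is a $1$-form in $da_1,da_2,\dots$ whose coefficients are rational functions of $x$ and $\underline{a}$, I apply \eqref{GRT} to each such coefficient separately, treating the $da_i$ as formal parameters. This yields
\[
\sum_{j=1}^\alpha \res_{\beta_j}(S(x)) + \sum_{i=1}^\mu \res_{x_i}(S(x)) + \res_\infty(S(x)\,dx) \;=\; 0,
\]
and by \eqref{PiRes} the last term equals $-\text{\large$\varPi$}(S(x))$. Rearranging and combining with \eqref{SumR} gives the claimed formula.

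The only place requiring care — and the step I expect to be the main obstacle — is making sure the partition of poles into $\{\beta_j\}$, $\{x_i\}$, and $\infty$ really accounts for every residue of $S(x)$ and never double-counts. Lemma \ref{AbelLem} takes care of the finite side, but one should verify that nothing cancels in a way that would introduce hidden poles (it cannot, because $S(x)$ is defined in the first place as a sum of rational functions) and that the $\beta_j$'s coming from $f_2(x)F_0(x)$ really exhaust the finite residues outside the $x_i$'s. Once this bookkeeping is in place, the rest of the argument is a one-line application of the Global Residue Theorem.
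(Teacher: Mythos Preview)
Your approach is exactly the paper's: sum Lemma~\ref{fxiyiSxi}, invoke Lemma~\ref{AbelLem} to pin down the possible poles, and apply the Global Residue Theorem \eqref{GRT} coefficient-by-coefficient in the $da_i$.

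One point needs correction. You write that disjointness of $\{\beta_j\}$ and $\{x_i\}$ is ``not essential'' because ``the residue is still a single well-defined number counted once.'' That is backwards: if some $x_i$ coincided with some $\beta_j$, then in your displayed equation
\[
\sum_{j=1}^\alpha \res_{\beta_j}(S(x)) + \sum_{i=1}^\mu \res_{x_i}(S(x)) + \res_\infty(S(x)\,dx) = 0
\]
that single residue would be added \emph{twice}, and the identity would fail. Disjointness is essential for the bookkeeping, and the paper supplies it: the $\beta_j$ are roots of $f_2(x)F_0(x)\in\K[x]$ and hence algebraic over $\K$, while the $x_i$ are by construction transcendental over $\K$ (this is why they appear in $F(x,\ua)$ rather than $F_0(x)$). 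So the two sets cannot meet. Once you replace your parenthetical with this observation, the proof is complete and matches the paper's.
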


\begin{proof}
The proof uses the Global Residue Theorem  \eqref{GRT}.  As noted in \eqref{SumR}, Lemma \ref{fxiyiSxi} implies that
\[
dv = \sum_{i=1}^\mu f(x_i,y_i)\,dx_i = -\sum_{i=1}^\mu \res_{x_i}(S(x)).
\]
In order to apply the Global Residue Theorem, we regard  the differential form $S(x)$ as a rational function, which we do as above by thinking of $d\ua = da_1, da_2, \dots$ as variables. Applying the  Global Residue Theorem to 
$S(x)\,dx$ gives
\[
\sum_{\beta \in \PP^1} \res_\beta( S(x)\,dx) = 0.
\]
%Let $\beta_1,\dots,\beta_\alpha$ be the roots of $f_2(x) F_0(x)$. These are algebraic over $\K$ and hence distinct from the roots $x_1,\dots,x_\mu$ of $F(x)$, which are transcendental over $\K$ since $dx_i \ne 0$. 
The roots $\beta_1,\dots,\beta_\alpha$ of $f_2(x) F_0(x)$ are algebraic over $\K$ and hence distinct from the roots $x_1,\dots,x_\mu$ of $F(x)$, which are transcendental over $\K$ since $dx_i \ne 0$. By Lemma~\ref{AbelLem},  the only possible poles of $S(x)\,dx$ are $\beta_1,\dots,\beta_\alpha,x_1,\dots,x_\mu,\infty$ (some may fail to be poles since we do not assume that $R(x)$ is relatively prime to $f_2(x) F_0(x)F(x)$). Then
\begin{align*}
0  &= \sum_{j=1}^\alpha \res_{\beta_j}(S(x)\,dx) + \res_\infty(S(x)\,dx) + \sum_{i=1}^\mu \res_{x_i}(S(x)\,dx) \\
 &= \sum_{j=1}^\alpha \res_{\beta_j}(S(x))  - \text{\large$\varPi$}(S(x)) + \sum_{i=1}^\mu \res_{x_i}(S(x)),
\end{align*}
where the second line uses \eqref{PiRes}.  When we combine this with the above expression for $dv$, the theorem follows immediately.
\end{proof}

%% Old Section 4
%% Now Section 3.4
\subsection{Examples}% of Abel's Formula for $dv$}
\label{AEx}

Here we give examples of how to use the formula
\begin{equation}
\label{61Sx}
\sum_{i=1}^\mu f(x_i,y_i)\,dx_i = \sum_{j=1}^\alpha \res_{\beta_j}(S(x))  - \text{\large$\varPi$}(S(x)) 
\end{equation}
from Theorem \ref{AThmdv}, where $\beta_1,\dots,\beta_\alpha$ are the roots of $f_2(x)F_0(x)$.

The first step in computing \eqref{61Sx} is to compute $S(x)$.  Recall that
\[
S(x) = \frac{R(x)}{f_2(x)F_0(x)F(x)}. 
\]
To find $R(x)$, we follow the proof of Lemma \ref{AbelLem} and write
\begin{equation}
\label{Sx1}
\Omega_1 =   f_1(x,y^{(1)})\, \prod_{k\ne 1}\theta(x,y^{(k)}) \,\delta \theta(x,y^{(1)}) =  R(x) \big(y^{(1)}\big)^{n-1} + R^{(1)}(x,y^{(1)}),
\end{equation}
where $R^{(1)}$ has degree $\le n-2$ in $y^{(1)}$.   (The proof of the lemma explains why we only need $\Omega_1$.)

The next step is to compute the roots $\beta_1,\dots,\beta_\alpha$ of $f_2(x)F_0(x)$ by symbolic methods.   From here, one could find the Laurent expansions of $S(x)$ at these roots and also at $\infty$, and then compute \eqref{61Sx}.

%\begin{remark}
There is also an approach that avoids Laurent expansions.  Let $\nu_j$ be the  multiplicity of $\beta_j$ as a root of $f_2(x)F_0(x)$.  Since $R(x)$ is a polynomial differential form and $f_2(x)F_0(x)$ and $F(x)$ have no roots in common, $\nu_j$ is greater than or equal to the order of $\beta_j$ as pole of 
%Namely, if $\beta_j$ is a root of $f_2(x)F_0(x)$ of multiplicity $\nu_j$, then
\[
S(x) =  \frac{R(x)}{f_2(x)F_0(x)F(x)}.% = \frac1{(x-\beta_j)^{\nu_j}} G_{\beta_j}(x), 
\]
%where $G_{\beta_j}(x)$ is a rational function defined at $\beta_j$ (remember that $f_2(x)F_0(x)$ and $F(x)$ have no roots in common).  
Thus, by \eqref{computeres}, we have
\[
\res_{\beta_j}(S(x)) %=  \frac1{(\nu_j-1)!} \frac{d^{\nu_j-1}}{dx^{\nu_j-1}} G_{\beta_j}(x)\Big|_{x=\beta_j} 
=  \frac1{(\nu_j-1)!} \frac{d^{\nu_j-1}}{dx^{\nu_j-1}} \big((x-\beta_j)^{\nu_j} S(x) \big)\Big|_{x=\beta_j}.
\]
%\end{remark}

We now present examples of \eqref{61Sx} where $\chi(x,y)$ and $\theta(x,y,\ua)$ are given by
\[
\chi(x,y) = y^2 - x^3 - 1 \ \text{ and } \ \theta(x,y,a,b,c) = x^2+ax+b+cy.
\]
Let the roots of $\chi(x,y)$ be $y^{(1)} = \y$ and $y^{(2)} = -\y$, where $\y^2 = x^3+1$.  Then 
\begin{align*}
r(x) &= \theta(y^{(1)})\theta(y^{(2)}) = \theta(\y)\theta(-\y)\\
&= (x^2+ax+b+c\y)(x^2+ax+b-c\y) = (x^2+ax+b)^2 - c^2(x^3+1).
\end{align*}
Note that $F_0(x) = 1$, so $r(x) = F(x)$.  The solutions of $\chi(x,y) = \theta(x,y) = 0$ are 
\[
(x_1,y_1),\ (x_2,y_2),\ (x_3,y_3),\ (x_4,y_4), 
\]
where $x_1,x_2, x_3, x_4$ are the roots of $r(x)$ and $y_i = -(x_i^2+ax_i+b)/c$ for $1 \le i \le 4$.  

For a rational function of the form
\[
f(x,y) = \frac{f_1(x,y)}{f_2(x) \chi'(y)} = \frac{f_1(x,y)}{f_2(x) \cdot 2y},
\]
we can compute $\sum_{i=1}^4 f(x_i,y_i)\,dx_i$ using \eqref{61Sx}. We illustrate how this works for two choices of $f(x,y)$.  

\begin{example}
\label{NewEx1a}
Consider
\[
f(x,y) = \frac{1}{xy} =  \frac{2}{x\cdot 2y} \ \text{ with } \ f_1(x,y) = 2,\  f_2(x) = x.
\]
Here, $f_2(x) F_0(x) = x$ and \eqref{61Sx} imply
\begin{equation}
\label{NE2}
\sum_{i=1}^4  \frac{dx_i}{x_i y_i} =  \res_0(S(x))-\text{\large$\varPi$}(S(x)).
\end{equation}
To find $S(x)$, we first compute the rational differential form $R(x)$.  By \eqref{Sx1}, 
\begin{equation}
\label{Omega1Ex}
\begin{aligned}
\Omega_1 &= f_1(x,y) \hskip1pt \theta(x,y^{(2)}) \hskip1pt \delta \theta(x,y^{(1)}) =  2 \hskip1pt \theta(x,-\y) \hskip1pt \delta \theta(x,\y) \\
&= 2 (x^2+ax + b - c\y)(x\,da + db + \y\,dc)\\
%&= 2\big( (x^2+ax + b )(x\,da + db)\\ &\quad + (-c(x\,da + db) + (x^2+ax + b)\hskip1pt dc)\y - c \hskip1pt dc \hskip1pt \y^2\big).
&= 2(x^2+ax + b )(x\,da + db)\\ &\quad + 2\big({-}c(x\,da + db) + (x^2+ax + b)\hskip1pt dc\big)\y - 2c \hskip1pt dc \hskip1pt \y^2.
\end{aligned}
\end{equation}
Since $\y^2 = x^3 + 1$ and $\y = y^{(1)}$, we obtain
\[
\Omega_1 =   \underbrace{2\big({-}c(x\,da + db) + (x^2+ax + b)\hskip1pt dc\big)}_{\displaystyle R(x)}y^{(1)} + \cdots.
\]
Thus
\[
R(x) = 2(-cx\,da - c\, db + (x^2+ax+b)\hskip1pt dc).
\]
Here, $S(x) = \frac{R(x)}{xr(x)}$.  Then $x = 0$ is a simple pole of $S(x)$, so by \eqref{computeres}, the residue is
\[
\res_0(S(x)) = \frac{R(0)}{r(0)} = \frac{2(-c\hskip1pt 0\,da - c\, db + (0^2+a\hskip.5pt 0+b)\hskip1pt dc)}{(0^2+0x+b)^2-c^2(0^3+1)} 
= \frac{-2c\,db + 2b\,dc}{b^2-c^2}.
\]
The $\text{\large$\varPi$}$ term is also easy since
\[
\text{\large$\varPi$}(S(x)) = \text{\large$\varPi$}\Big(\frac{2(-cx\,da - c\, db + (x^2+ax+b)\hskip1pt dc)}{x((x^2+ax+b)^2-c^2(x^3+1))}\Big) = \text{\large$\varPi$}\Big(\frac{2x^2 \, dc + \cdots}{x^5+ \cdots} \Big)= 0.
\]
It follows from \eqref{NE2} that 
\[
\sum_{i=1}^4 \frac{dx_i}{x_iy_i} =  \frac{-2c\,db + 2b\,dc}{b^2-c^2}.
\]
One can check that the differential form on the right is closed.  In fact, it is exact provided that logarithms are allowed:
\[
\frac{-2c\,db + 2b\,dc}{b^2-c^2} = \delta\log\Big(\frac{b+c}{b-c}\Big),
\]
so that
\[
\sum_{i=1}^4 \int \frac{dx_i}{x_iy_i} = \log\Big(\frac{b+c}{b-c}\Big) + C.
\]
We will see in Example \ref{NewEx3a} that when we compute $\sum_{i=1}^4 \int\!\frac{dx_i}{x_iy_i}$ using Abel's formula for $v$, the logarithm appears in a completely natural way.\hfill$\Diamond$
\end{example}

\begin{example}
\label{NewEx1b}
Next, consider
\[
f(x,y) = \frac{1}{x^2y} =  \frac{2}{x^2\cdot 2y} \ \text{ with } \ f_1(x,y) = 2,\  f_2(x) = x^2.
\]
Then $f_2(x) F_0(x) = x^2$ and \eqref{61Sx} imply that
\begin{equation}
\label{NE3}
\sum_{i=1}^4 \frac{dx_i}{x_i^2y_i} = \res_0(S(x))-\text{\large$\varPi$}(S(x)).
\end{equation}
Here, $R(x)$ is unchanged from Example \ref{NewEx1a}, and since $S(x) = \frac{R(x)}{x^2 r(x)}$, it is easy to see that the $\text{\large$\varPi$}$ term vanishes.  So we need only find the residue at $x = 0$. 
Since $x = 0$ is a double pole of $S(x)=\frac{R(x)}{x^2 r(x)}$, \eqref{computeres} tells us that the residue is
\[
\res_0\Big(\frac{R(x)}{x^2 r(x)} \Big) = \frac{d}{dx}\frac{R(x)}{r(x)}\Big|_{x=0} = \frac{R'(0)r(0) - R(0)r'(0)}{r(0)^2},
\]
and then \eqref{NE3} leads without difficulty to the formula
\[
\sum_{i=1}^4 \frac{dx_i}{x_i^2y_i} = \frac{-2c(b^2-c^2)\hskip1pt da + 4abc\,db - 2a(b^2+c^2)\hskip1pt dc}{(b^2-c^2)^2}.
\]
It is less obvious that 
\[
\sum_{i=1}^4 \int\!\frac{dx_i}{x_i^2y_i} =  -\frac{2ac}{b^2-c^2} + C,
\]
but this will become clear in Example \ref{NewEx3b}.\hfill$\Diamond$
\end{example}

 %% Old Section 5
 %% Now Section 4
\section{Abel's Formula for $v$}
\label{Abelv}

The goal of this section is to state and prove a modern version of Abel's formula for $v = \sum_{i=1}^\mu \int\! f(x_i,y_i)\, dx_i$.  Compared to our discussion of $dv$ in Section \ref{Abeldv}, this will take more thought because of the presence of logarithms.  Examples will be given in Section \ref{AbelExv}, and Section \ref{Limits} will discuss $v$ as a function on the Riemann surface of $\chi(x,y) = 0$.

\subsection{A Modern Version of Abel's Formula for \emph{v}}
\label{AvMV}

In one sense, a modern version of Abel's Formula for $v$ is easy to state.  Recall from \eqref{ATModern} that
\[
dv = \sum_{j=1}^\alpha \res_{\beta_j}\hskip-1pt \Big(\sum_{\ell=1}^n f(x,y^{(\ell)}) \hskip1pt\frac{\delta \theta(y^{(\ell)})}{\theta(y^{(\ell)})}\Big)  - \text{\large$\varPi$}\Big( \sum_{\ell=1}^n f(x,y^{(\ell)}) \hskip1pt\frac{\delta \theta(y^{(\ell)})}{\theta(y^{(\ell)})} \Big) ,
\]
where the $\beta_j$ are the roots of $f_2(x) F_0(x)$.  As usual, $\delta$ is the differential with respect to the indeterminates $\ua = a_1, a_2, \dots$.  Since the $a_i$ only appear in the $\theta$ terms and
\[
\frac{\delta \theta(y^{(\ell)})}{\theta(y^{(\ell)})} = \delta \log\theta(y^{(\ell)}),
\]
we see that, at least formally, the integrated version of the above formula for $dv$ is given by
\begin{equation}
\label{Line3}
v =   \sum_{j=1}^\alpha \res_{\beta_j}\hskip-1pt \Big(\sum_{\ell=1}^n f(x,y^{(\ell)}) \hskip1pt\log\theta(y^{(\ell)})\Big)
- \text{\large$\varPi$}\Big( \sum_{\ell=1}^n f(x,y^{(\ell)}) \hskip1pt \log\theta(y^{(\ell)})\Big)  + C,
\end{equation}
where $C$ is a constant of integration. This can be regarded as a modern version of Abel's original formula 
\[
v = C-\text{\large$\varPi$} \frac{F_2x}{\theta_1x} \sum \frac{f_1(x,y)}{\chi'y}  \log \theta y
+ \sum{}\rule{0pt}{12pt}'\nu \frac{d^{\nu-1}}{dx^{\nu-1}} \bigg\{\frac{F_2x}{\theta_1^{(\nu)}x} \sum \frac{f_1(x,y)}{\chi'y}  \log \theta y\bigg\}.
\]
from \eqref{Abel37}.  It is first main result of Abel's Paris memoir \cite[(37) on p.\ 159]{A1}.

Making all of this rigorous will take some work.  First, we need to give an algorithmic meaning to the expression
\begin{equation}
\label{Thetalog}
\sum_{j=1}^\alpha \res_{\beta_j}\hskip-1pt \Big(\sum_{\ell=1}^n f(x,y^{(\ell)}) \hskip1pt\log\theta(y^{(\ell)})\Big)
 - \text{\large$\varPi$}\Big( \sum_{\ell=1}^n f(x,y^{(\ell)}) \hskip1pt \log\theta(y^{(\ell)})\Big) 
 \end{equation}
 and second, with this algorithmic meaning in mind, we need to prove that
 \begin{equation}
 \label{dThetalog}
 dv = \delta\Big(\! \sum_{j=1}^\alpha \res_{\beta_j}\hskip-1pt \Big(\sum_{\ell=1}^n f(x,y^{(\ell)}) \hskip.5pt\log\theta(y^{(\ell)})\Big)-\text{\large$\varPi$}\Big( \sum_{\ell=1}^n f(x,y^{(\ell)}) \hskip.5pt \log\theta(y^{(\ell)})\Big)  \!\Big).
 \end{equation}
Since $dv = \sum_{i=1}^\mu f(x_i,y_i)\,dx_i$ is rational differential in the indeterminates $\ua$ and $\delta$ is the differential with respect to $\ua$, \eqref{dThetalog} makes it clear what \eqref{Line3} means.

\subsection{Puiseux Series and Logarithms}
\label{AvPW}

To make sense of \eqref{Thetalog}, we need residues and logs.  For residues, the series expansions are more complicated because they often involve fractional exponents.  Given $\beta \in \overline{\K}$ (the algebraic closure of $\K$), it is well known that the field 
\[
\bigcup_{h=1}^\infty \overline{\K}((x-\beta)^{1/h}))
\]
is algebraically closed (see, for example, \cite[Chapter IV, Theorem 3.1]{Walker}).  Elements of this field are series in $x-\beta$ with fractional exponents and coefficients in $\overline{\K}$, where the exponents have bounded denominators and only finitely many are negative.  These are commonly called \emph{Puiseux series} in $x-\beta$.  Puiseux series in $1/x$ are defined similarly.  

 It follows that $\chi(x,y)$ splits completely in this field, so that the roots $y^{(\ell)}$ can be written as Puiseux series in $x-\beta$ or $1/x$.  Algorithms for computing these Puiseux series are described by Walker \cite[Chapter IV, \S3]{Walker} and (in full detail) Edwards \cite[Essays 4.4 and 8.4--8.7]{Edwards}.  

The version of Abel's formula for $v$ given in Theorem \ref{justifyint} uses the logarithms of certain nonzero elements $c \in \overline{\K}[\ua]$.  Recall that $\overline{\K}[\ua]$ has commuting derivations $\frac{\partial}{\partial a_i}$.  By \cite[Section 3.2]{Bronstein}, there is a differential extension of $\overline{\K}(\ua)$ containing an element $L$ such that
\[
\frac{\partial L}{\partial a_i} = \frac1c \frac{\partial c}{\partial a_i}
\]
for all $i$.  The discussion following \cite[Definition 5.1.3]{Bronstein} allows us to write $L = \log(c)$, so that the above equation can be written
\begin{equation}
\label{logprop}
\frac{\partial \log(c)}{\partial a_i} = \frac1c \frac{\partial c}{\partial a_i}
\end{equation}
for all $i$.  Note that the derivations $\frac{\partial}{\partial a_i}$ continue to commute in this extension.

Turning to Abel's formula for $v$, the idea is to introduce only logs needed to compute the final answer.   A naive reading of \eqref{Thetalog} would be to start from
\begin{equation}
\label{naiveShat}
\widehat{S}(x) =  \sum_{\ell=1}^n f(x,y^{(\ell)})\log \theta(y^{(\ell)}).
\end{equation}
and  then interpret  \eqref{Thetalog}  to mean
\[
\sum_{j=1}^\alpha \res_{\beta_j}(\widehat{S}(x))  - \text{\large$\varPi$}(\widehat{S}(x)) ,
\]
where $\beta_1,\dots,\beta_\alpha$ are the roots of $f_2(x)F_0(x)$.   The challenge is that  $\log \theta(y^{(\ell)})$ can be poorly behaved at $\beta_j$ and $\infty$.  It may fail to have Puiseux expansions in the usual sense, and even when it does, the expansions will contain additional logs. 

Since we want to minimize the number of logs that appear, we will dispense with $\log \theta(y^{(\ell)})$.  Instead, for each $\beta_j$, we will replace it with a Puiseux series denoted $\log_{\beta_j}\! \theta(y^{(\ell)})$.   There will be a similar Puiseux series $\log_{\infty} \!\theta(y^{(\ell)})$ at $\infty$.

To define these series, take $\beta \in \overline{\K}$.  Since $y^{(\ell)}$ is integral over $x$ (remember that $\chi(x,y)$ is monic in $y$), its Puiseux expansion in $x - \beta$ has coefficients in $\overline{\K}$ and
no negative exponents.  For the polynomial $\theta(x,y,\ua)$, the indeterminates $\ua$ are some of the coefficients of $\theta(x,y,\ua)$ with respect to $x$ and $y$.  It follows that the Puiseux expansion of $\theta(y^{(\ell)}) = \theta(x,y^{(\ell)},\ua) $ in $x - \beta$ has no negative exponents and coefficients in 
\[
\overline{\K}[\ua]_{\le1} = \{\lambda_0 + \lambda_1 a_1 +  \lambda_2 a_2 + \cdots \mid \lambda_i \in \overline{\K}\}.
\]
Thus the Puiseux expansion is $\theta(y^{(\ell)}) =  c_{\beta,\ell}\,(x-\beta)^{e_{\beta,\ell}} + \cdots$, where $c_{\beta,\ell} \in \underline{\K}[\ua]_{\le1}$, $e_{\beta,\ell} \ge 0$ in $\Q$, and the omitted terms all involve higher powers (possibly fractional) of $x-\beta$.  This enables us to write 
\[
\theta(y^{(\ell)}) = c_{\beta,\ell}\hskip1.5pt (x-\beta)^{e_{\beta,\ell}}\hskip.5pt (1 + W_{\beta,\ell}),
\]
where $W_{\beta,\ell}$ is a Puiseux series involving only positive powers of  $x-\beta$.  In a similar way, for $\beta = \infty$, $\theta(y^{(\ell)})$ can be written
\[
\theta(y^{(\ell)}) = c_{\infty,\ell}\hskip1.5pt x^{e_{\infty,\ell}}\hskip.5pt (1 + W_{\infty,\ell}),
\]
where $W_{\infty,\ell}$ is now a Puiseux series involving only positive powers of  $\frac1x$. 

\begin{definition}
\label{logxi} 
For $\theta(y^{(\ell)}) = c_{\beta,\ell}\hskip1.5pt (x-\beta)^{e_{\beta,\ell}}\hskip.5pt (1 + W_{\beta,\ell})$ as above, we define
\[
\log_\beta \theta(y^{(\ell)}) = \log(c_{\beta,\ell}) + \sum_{k=1}^\infty \frac{(-1)^{k-1} W_{\beta,\ell}^k}{k}.
\]
Similarly, when $\theta(y^{(\ell)}) = c_{\infty,\ell}\hskip1.5pt x^{e_{\infty,\ell}}\hskip.5pt (1 + W_{\infty,\ell})$, we define
\[
\log_\infty \theta(y^{(\ell)})  = \log(c_{\infty,\ell}) + \sum_{k=1}^\infty \frac{(-1)^{k-1} W_{\infty,\ell}^k}{k}.
\]
\end{definition}

The logarithms $\log(c_{\beta,\ell})$ and $\log(c_{\infty,\ell})$ in Definition \ref{logxi} are from \eqref{logprop}.  Hence
\[
\delta\log(c_{\beta,\ell}) = \sum_i \frac{\partial \log(c_{\beta,\ell})}{\partial a_i} \hskip1pt da_i = \sum_i \frac1{c_{\beta,\ell}} \frac{\partial c_{\beta,\ell}}{\partial a_i} \hskip1pt da_i = \frac1{c_{\beta,\ell}} \sum_i \frac{\partial c_{\beta,\ell}}{\partial a_i} \hskip1pt da_i = \frac{\delta c_{\beta,\ell} }{ c_{\beta,\ell} },
\]
%constructed as follows.  For the field $\overline{\K}(\ua)$, the differential $\delta$ is built from the derivations $\frac{\partial}{\partial a_i} : \overline{\K}(\ua) \to \overline{\K}(\ua)$.   The $1$-form 
%\[
%\omega = \frac{\delta c_{\beta,\ell} }{ c_{\beta,\ell} }= \frac{1}{c_{\beta,\ell}}  \frac{\partial c_{\beta,\ell}}{\partial a_1}   \hskip.5pt da_1 +  \frac{1}{c_{\beta,\ell}}  \frac{\partial c_{\beta,\ell}}{\partial a_2}   \hskip.5pt da_2 + \cdots
%\]
%is easily seen to be closed. Since $c_{\beta,\ell} \in \underline{\K}[\ua]_{\le1} \subseteq \overline{\K}(\ua)$, Theorem 8 of \cite{CK} implies that there is a differential extension where $\omega$ is exact, i.e., the extension contains an element $L_{\beta,\ell}$ such that $\omega = \delta L_{\beta,\ell}$.  Thus 
%\[
%\delta L_{\beta,\ell} = \frac{\delta c_{\beta,\ell} }{ c_{\beta,\ell} }.
%\]
%This allows us to write $L_{\beta,\ell} =  \log(c_{\beta,\ell})$, with the result that
%\[
%\delta\log(c_{\beta,\ell} ) = \frac{\delta c_{\beta,\ell} }{ c_{\beta,\ell} },
%\]
and similarly for $\log(c_{\infty,\ell})$.  Note that $\log_\beta \theta(y^{(\ell)})$ and $\log_\infty \theta(y^{(\ell)})$ are Puiseux series whose constant terms involve logs and whose other coefficients lie in $\overline{\K}(\ua)$.

The reader may wonder why Definition \ref{logxi} ignores $(x-\beta)^{e_{\beta,\ell}}$ and $x^{e_{\infty,\ell}}$. One reason is that in spite of missing these factors, $\log_\beta \theta(y^{(\ell)})$ and $\log_\infty \theta(y^{(\ell)})$ still behave correctly with respect to $\delta$, as shown by the following lemma. 

\begin{lemma}
\label{dlogxitheta} 
$\displaystyle{\delta\log_\beta \theta(y^{(\ell)}) = \frac{\delta\theta(y^{(\ell)})}{ \theta(y^{(\ell)})}}$ and \,$\displaystyle{\delta\log_\infty \theta(y^{(\ell)}) = \frac{\delta\theta(y^{(\ell)})}{ \theta(y^{(\ell)})}}$.
\end{lemma}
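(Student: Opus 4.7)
The plan is to verify both identities by direct computation from Definition \ref{logxi}, exploiting two key facts: (i) the operator $\delta$ acts only on the indeterminates $\underline{a}$ and ignores $x$ (and hence ignores $\beta$ and the exponents $e_{\beta,\ell}, e_{\infty,\ell}$, which are rational numbers determined by the leading exponent of the Puiseux expansion), and (ii) on a formal Puiseux series with coefficients in $\overline{\K}[\underline{a}]$, the derivation $\delta$ acts coefficient-by-coefficient. I will treat the $\beta$ case in detail, since the $\infty$ case is identical after replacing $x-\beta$ by $1/x$.

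First I would compute the right-hand side. Starting from $\theta(y^{(\ell)}) = c_{\beta,\ell}\,(x-\beta)^{e_{\beta,\ell}}(1+W_{\beta,\ell})$, applying $\delta$ to the product, and using that the factor $(x-\beta)^{e_{\beta,\ell}}$ is annihilated by $\delta$, yields
\[
\delta\theta(y^{(\ell)}) = (\delta c_{\beta,\ell})\,(x-\beta)^{e_{\beta,\ell}}(1+W_{\beta,\ell}) + c_{\beta,\ell}\,(x-\beta)^{e_{\beta,\ell}}\,\delta W_{\beta,\ell}.
\]
Dividing by $\theta(y^{(\ell)})$ gives
\[
\frac{\delta\theta(y^{(\ell)})}{\theta(y^{(\ell)})} = \frac{\delta c_{\beta,\ell}}{c_{\beta,\ell}} + \frac{\delta W_{\beta,\ell}}{1+W_{\beta,\ell}}.
\]

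Next I would compute the left-hand side. Applying $\delta$ term by term to the series in Definition \ref{logxi}, the first term contributes $\delta\log(c_{\beta,\ell}) = \delta c_{\beta,\ell}/c_{\beta,\ell}$ by the displayed identity immediately preceding the lemma (which comes from \eqref{logprop}). For the remaining series, since $\delta$ commutes with the formal sum and obeys the Leibniz rule,
\[
\delta\sum_{k=1}^\infty \frac{(-1)^{k-1} W_{\beta,\ell}^k}{k} = \sum_{k=1}^\infty (-1)^{k-1} W_{\beta,\ell}^{k-1}\,\delta W_{\beta,\ell} = \frac{\delta W_{\beta,\ell}}{1+W_{\beta,\ell}},
\]
where the last equality is the formal geometric-series identity $1/(1+W) = \sum_{k\ge 0}(-1)^k W^k$ in the ring of Puiseux series in $x-\beta$. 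Adding the two pieces gives exactly the expression computed above, proving the first identity. The same argument with $W_{\infty,\ell}$ in place of $W_{\beta,\ell}$ proves the second.

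The only potential subtlety, and really the main point to check, is that $\delta$ genuinely kills the factor $(x-\beta)^{e_{\beta,\ell}}$; this is why the exponent is allowed to be any rational number. Since the indeterminates $\underline{a}$ appear only linearly in the coefficients of $\theta(x,y,\underline{a})$, the exponent $e_{\beta,\ell}$ is the order of the leading (nonzero, as a polynomial in $\underline{a}$) term of the Puiseux expansion and is therefore a fixed rational number independent of $\underline{a}$. This is precisely the reason Definition \ref{logxi} can safely omit the $(x-\beta)^{e_{\beta,\ell}}$ and $x^{e_{\infty,\ell}}$ factors, as the remark preceding the lemma anticipates.
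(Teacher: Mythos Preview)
Your proof is correct and follows essentially the same approach as the paper's: both arguments differentiate the series from Definition \ref{logxi} term by term to obtain $\dfrac{\delta c_{\beta,\ell}}{c_{\beta,\ell}} + \dfrac{\delta W_{\beta,\ell}}{1+W_{\beta,\ell}}$, and both use that $(x-\beta)^{e_{\beta,\ell}}$ is constant with respect to $\delta$ to identify this with $\dfrac{\delta\theta(y^{(\ell)})}{\theta(y^{(\ell)})}$. The only cosmetic difference is that you compute the two sides separately and match them, whereas the paper starts from the left-hand side and algebraically rewrites it into the right-hand side.
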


\begin{proof}  For simplicity, let $W = W_{\beta,\ell}$.  Then
\begin{align*}
\delta\log_\beta \theta(y^{(\ell)}) &= \delta\Big( \!\log(c_{\beta,\ell}) +\sum_{k=1}^\infty \frac{(-1)^{k-1} W^k}{k}\Big)
= \frac{\delta c_{\beta,\ell} }{ c_{\beta,\ell} } + \sum_{k=1}^\infty (-1)^{k-1} W^{k-1}\,\delta W\\
&= \frac{\delta c_{\beta,\ell} }{ c_{\beta,\ell} } + \frac{\delta W}{1{+}W} =  \frac{\delta c_{\beta,\ell} }{ c_{\beta,\ell} } + \frac{\delta(1{+}W)}{1{+}W} = \frac{(\delta c_{\beta,\ell})(1{+}W) + c_{\beta,\ell} \,\delta(1{+}W)}{c_{\beta,\ell} (1{+}W)}\\
&= \frac{\delta(c_{\beta,\ell} (1{+}W))}{c_{\beta,\ell} (1{+}W)} \cdot \frac{(x-\beta)^{e_{\beta,\ell}}}{(x-\beta)^{e_{\beta,\ell}}} \!=\!  \frac{\delta\big(c_{\beta,\ell} \,(x-\beta)^{e_{\beta,\ell}}(1{+}W)\big)}{c_{\beta,\ell}\,(x-\beta)^{e_{\beta,\ell}} (1{+}W)}  \!=\!  \frac{\delta\theta(y^{(\ell)})}{ \theta(y^{(\ell)})} ,
\end{align*}
where the second equality of the last line follows because $(x-\beta)^{e_{\beta,\ell}}$ is constant with respect to $\delta$.  The proof for $\delta\log_\infty \theta(y^{(\ell)})$ is similar and hence omitted.
\end{proof}

Another way to think about Definition \ref{logxi} is to naively assume the properties of logarithms and write
\begin{align*}
\log \theta(y^{(\ell)}) &= \log\big( c_{\beta,\ell}\hskip1.5pt (x-\beta)^{e_{\beta,\ell}}\hskip.5pt (1 + W_{\beta,\ell})\big)\\
&= \log( c_{\beta,\ell}) + e_{\beta,\ell} \log (x-\beta) + \log(1+W_{\beta,\ell})\\ 
&=  \log( c_{\beta,\ell}) + e_{\beta,\ell} \log (x-\beta) +  \sum_{k=1}^\infty \frac{(-1)^{k-1} W_{\beta,\ell}^k}{k},
\end{align*}
which agrees with $\log_\beta \theta(y^{(\ell)})$ except for the term $e_{\beta,\ell} \log (x-\beta)$.  This term is unwanted since it requires adding another log and unneeded since it is constant with respect to $\delta$.   %(The above expansion, sometimes called a \emph{generalized Puiseux series}, can be challenging to evaluate numerically over $\C$, as shown in \cite{S}).

\subsection{Proof of Abel's Formula for \emph{v}}
\label{AvPf}

We begin with the algorithmic meaning of \eqref{Thetalog}.  First modify \eqref{naiveShat} by defining 
\[
\widehat{S}_\beta =  \sum_{\ell=1}^n f(x,y^{(\ell)})\log_\beta \theta(y^{(\ell)}),
\]
where $\log_\beta \theta(y^{(\ell)})$ is from Definition \ref{logxi} and $f(x,y^{(\ell)})$ is computed using the Puiseux series for $y^{(\ell)}$ at $x = \beta$.  Thus $\widehat{S}_\beta$ is a Puiseux series in $x-\beta$, where as many terms as needed can be computed algorithmically.  In a similar way, define
\[
\widehat{S}_\infty =  \sum_{\ell=1}^n f(x,y^{(\ell)})\log_\infty \theta(y^{(\ell)})
\]
as a Puiseux series in $\frac1x$.  Then we interpret  \eqref{Thetalog}  to mean
\[
 \sum_{j=1}^\alpha \res_{\beta_j}(\widehat{S}_{\beta_j}(x))-\text{\large$\varPi$}(\widehat{S}_\infty(x)).
\]
Only finitely many terms of the Puiseux series are needed to compute $\res_{\beta_j}(\widehat{S}_{\beta_j}(x))$ and $\text{\large$\varPi$}(\widehat{S}_\infty(x))$.  Thus  \eqref{Thetalog} now has a rigorous algorithmic meaning.   It follows that Abel's formula for $v = \sum_{i=1}^\mu \int\! f(x_i,y_i)\,dx_i$ will be proved once we prove \eqref{dThetalog}, which we rewrite as
\begin{equation}
\label{dThetalog2}
dv =  \delta\Big(\sum_{j=1}^\alpha \res_{\beta_j}(\widehat{S}_{\beta_j}(x)) \hskip-1pt -\text{\large$\varPi$}(\widehat{S}_\infty(x))\Big).
\end{equation}
This leads to the following theorem.

\begin{theorem}
\label{justifyint}
Assuming \eqref{Assume1}, $v = \sum_{i=1}^\mu \int\! f(x_i,y_i)\,dx_i$ is given by
\begin{align*}
v &= \sum_{j=1}^\alpha \res_{\beta_j}\hskip-1pt \Big(\sum_{\ell=1}^n f(x,y^{(\ell)}) \hskip1pt\log_{\beta_j}\theta(y^{(\ell)})\Big)
- \text{\large$\varPi$}\Big( \sum_{\ell=1}^n f(x,y^{(\ell)}) \hskip1pt \log_\infty\theta(y^{(\ell)})\Big) 
 + C\\
 &=  \sum_{j=1}^\alpha \res_{\beta_j}(\widehat{S}_{\beta_j}(x)) - \text{\large$\varPi$}(\widehat{S}_\infty(x)) +  C.
\end{align*}
\end{theorem}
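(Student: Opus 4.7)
The plan is to prove the differentiated identity \eqref{dThetalog2} and conclude by antidifferentiating with respect to $\ua$; any $\delta$-closed discrepancy becomes the constant of integration $C$. By Theorem \ref{AThmdv} we have $dv = \sum_{j=1}^\alpha \res_{\beta_j}(S(x)) - \text{\large$\varPi$}(S(x))$, so everything reduces to establishing, for each $\beta_j$,
$$\delta\,\res_{\beta_j}(\widehat{S}_{\beta_j}(x)) = \res_{\beta_j}(S(x)) \quad\text{and}\quad \delta\,\text{\large$\varPi$}(\widehat{S}_\infty(x)) = \text{\large$\varPi$}(S(x)).$$

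The argument has two steps. First, I would commute $\delta$ past $\res_{\beta_j}$ and $\text{\large$\varPi$}$. The local coordinate $x - \beta_j$ is $\delta$-constant because $\beta_j \in \overline{\K}$ is algebraic over $\K$, and $1/x$ is $\delta$-constant at $\infty$; extracting the coefficient of $(x-\beta_j)^{-1}$ (respectively $1/x$) from a Puiseux expansion is therefore a coefficient-by-coefficient $\overline{\K}$-linear operation that commutes with each derivation $\partial/\partial a_i$ making up $\delta$. Second, I would identify $\delta \widehat{S}_\beta(x)$ with the Puiseux expansion of $S(x)$ at $\beta$. The Puiseux coefficients of $y^{(\ell)}$ at $\beta$ lie in $\overline{\K}$, hence so do the Puiseux coefficients of $f(x, y^{(\ell)})$, and these are all $\delta$-constant because $\ua$ does not appear in $\chi$ or in $f$. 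Applying $\delta$ termwise to $\widehat{S}_\beta(x) = \sum_\ell f(x, y^{(\ell)}) \log_\beta \theta(y^{(\ell)})$ and invoking Lemma \ref{dlogxitheta} gives
$$\delta \widehat{S}_\beta(x) = \sum_{\ell=1}^n f(x, y^{(\ell)})\,\frac{\delta\theta(y^{(\ell)})}{\theta(y^{(\ell)})},$$
which is exactly the Puiseux expansion at $\beta$ of the rational function $S(x)$ defined in \eqref{Sxdef}. The same argument works at $\infty$ using $\log_\infty$.

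The main obstacle is technical rather than conceptual: one must verify that the termwise $\delta$-derivative of the formal Puiseux series $\widehat{S}_{\beta_j}$ really does match the Puiseux expansion of the rational function $S(x)$ at $\beta_j$, and similarly at $\infty$. This amounts to checking that on the Puiseux field over $\overline{\K}[\ua]$ enlarged by the logarithm symbols $\log(c_{\beta,\ell})$ of Definition \ref{logxi}, both Puiseux expansion and $\delta$ behave as expected (a ring homomorphism and a commuting derivation, respectively) because $\delta(x-\beta)=0$. Once this bookkeeping is in place, combining the two steps yields \eqref{dThetalog2}. Antidifferentiating in $\ua$ then gives $v = \sum_{j=1}^\alpha \res_{\beta_j}(\widehat{S}_{\beta_j}(x)) - \text{\large$\varPi$}(\widehat{S}_\infty(x)) + C$, as desired.
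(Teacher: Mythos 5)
Your proposal is correct and follows essentially the same route as the paper: reduce to the differentiated identity \eqref{dThetalog2} via Theorem \ref{AThmdv}, commute $\delta$ with $\res_{\beta_j}$ and {\large$\varPi$} because $\delta$ acts coefficientwise on Puiseux series in the $\delta$-constant local coordinates, and then identify $\delta\widehat{S}_{\beta_j}(x)$ and $\delta\widehat{S}_\infty(x)$ with $S(x)$ termwise using Lemma \ref{dlogxitheta} and the fact that $\ua$ does not appear in $f(x,y^{(\ell)})$. The technical bookkeeping you flag is exactly what the paper handles (implicitly) in the same way.
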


\begin{proof}
As noted above, it suffices to prove \eqref{dThetalog2}.  By Theorem \ref{AThmdv}, we know that
\[
dv =\sum_{j=1}^\alpha \res_{\beta_j}(S(x)) -\text{\large$\varPi$}(S(x)).
\]
Comparing this to \eqref{dThetalog2}, we see that the theorem will follow once we prove that
\[
\delta\hskip1pt \res_{\beta_j}(\widehat{S}_{\beta_j}(x)) = \res_{\beta_j}(S(x)) \ \text{ and }\ \delta\hskip1pt \text{\large$\varPi$}(\widehat{S}_\infty(x)) = \text{\large$\varPi$}(S(x)).
\]
Since $\delta$ is the differential with respect to $\ua = a_1,a_2, \dots$, it operates on a Puiseux series in $x-\beta_j$ or $\frac1x$ by applying $\delta$ to the coefficients.  It follows that
\[
\delta\hskip1pt \res_{\beta_j}(\widehat{S}_{\beta_j}(x)) = \res_{\beta_j}(\delta \hskip1pt \widehat{S}_{\beta_j}(x)) \ \text{ and }\ \delta\hskip1pt \text{\large$\varPi$}(\widehat{S}_\infty(x)) =  \text{\large$\varPi$}(\delta\hskip1pt \widehat{S}_\infty(x)).
\]
We now have a straightforward computation:
\begin{align*}
\delta \hskip1pt \widehat{S}_{\beta_j}(x)) &= \delta\Big( \sum_{\ell=1}^n f(x,y^{(\ell)})\log_{\beta_j} \theta(y^{(\ell)}) \Big) = \sum_{\ell=1}^n f(x,y^{(\ell)}) \hskip1pt \delta\log_{\beta_j} \theta(y^{(\ell)})\\
&= \sum_{\ell=1}^n f(x,y^{(\ell)}) \frac{\delta\theta(y^{(\ell)})}{ \theta(y^{(\ell)})}= S(x).
\end{align*}
On the first line, the second equality follows since the indeterminates $\ua$ do not appear in $ f(x,y^{(\ell)})$.  On the second line, the first equality is Lemma \ref{dlogxitheta}, and the second equality is the definition of $S(x)$ given in \eqref{Sxdef}.  The equality $\delta\hskip1pt \text{\large$\varPi$}(\widehat{S}_\infty(x)) = \text{\large$\varPi$}(S(x))$ is proved similarly, so we are done.
\end{proof}

\begin{remark}
Theorem \ref{justifyint} gives an algorithm for computing $\sum_{i=1}^\mu \int\! f(x_i,y_i)\,dx_i$ and shows that the only logs that appear in the final answer (if any) are logs of linear combinations of 1 and $\ua = a_1,a_2,\dots$ with coefficients in $\overline{\K}$.  
\end{remark}

%% Old Section 6
%% Now Section 4.4
\subsection{Examples}% of Abel's Formula for $v$}
\label{AbelExv}

We return to Examples \ref{NewEx1a} and \ref{NewEx1b} and use Theorem \ref{justifyint} to get integral formulas.
As in Section \ref{AEx}, we have $\chi(x,y) = y^2 - x^3 - 1$ and $\theta(x,y,a,b,c) = x^2+ax+b+cy$.   Here, $y^{(1)} = \y$ and $y^{(2)} = -\y$ where $\y^2 = x^3+1$.   Also recall that $F_0(x) = 1$ and $F(x) = r(x) = (x^2+ax+b)^2 - c^2(x^3+1)$.  Our goal is to compute
\[
\sum_{i=1}^4\int\!  f(x_i,y_i)\,dx_i 
\]
for the two choices of $f(x,y)$ featured in  Examples \ref{NewEx1a} and \ref{NewEx1b}.

\begin{example}
\label{NewEx3a}
First consider $f(x,y) = \frac{1}{xy} =  \frac{2}{x\cdot 2y}$, so $f_2(x) = x$.  It follows that
\[
\sum_{i=1}^4 \int \frac{dx_i}{x_i y_i} = \res_0(\widehat{S}_0(x)) -\text{\large$\varPi$}(\widehat{S}_\infty(x)) + C.%,
\]
%where
%\begin{align*}
%\widehat{S}_0(x) &=  \frac{1}{x\y}\log_0 \theta(\y)  + \frac{1}{x(-\y)}\log_0 \theta(-\y)\\
%\widehat{S}_\infty(x) &=  \frac{1}{x\y}\log_\infty \theta(\y)  + \frac{1}{x(-\y)}\log_\infty \theta(-\y).
%\end{align*}

We begin with $\widehat{S}_0(x)$, which uses the Puiseux expansion
\[
\y = \sqrt{x^3+1} = 1 + \tfrac12 x^3 - \tfrac18 x^6 + \tfrac1{16}x^9 - \cdots.
\]
Thus
\begin{align*}
\theta(\y) &= x^2 + ax + b + c\big(1 + \tfrac12 x^3 - \tfrac18 x^6+ \cdots) = b+c + ax + x^2 +  \tfrac{c}2 x^3 - \cdots\\
&= (b+c)\Big(1 + \frac{a}{b+c} x + \frac{1}{b+c}x^2 + \cdots\Big).
\end{align*}
It follows that
\[
\frac{1}{x\y}\log_0 \theta(\y) = \frac{\log(b+c) + (\frac{a}{b+c} x + \frac{1}{b+c}x^2 + \cdots) - \cdots}{x(1 + \frac12x^3 + \cdots)} = \frac{\log(b+c)}x + \cdots.
\]
In a similar way, 
\[
\theta(-\y) = x^2 + ax + b - c\big(1 + \tfrac12 x^3 - \tfrac18 x^6+ \cdots) = b-c + ax + x^2 +  \tfrac{c}2 x^3 - \cdots
\]
leads to
\[
\frac{1}{x(-\y)}\log_0 \theta(-\y) = -\frac{\log(b-c)}x + \cdots.
\]
Adding these gives 
\begin{equation}
\label{S0xEx}
\widehat{S}_0(x) = \frac{1}{x\y}\log_0 \theta(\y) + \frac{1}{x(-\y)}\log_0 \theta(-\y) = \frac{\log(b+c) - \log(b-c)}x + \cdots.
\end{equation}

%$\widehat{S}_0(x)$, from which we obtain
%\[
%\sum_{i=1}^4 \int\! \frac{dx_i}{x_i y_i} = \res_0(\widehat{S}_0(x)) +C = \log(b+c) - \log(b-c) + C.
%\]

The next step is to compute $\widehat{S}_\infty(x)$.  
%\[
%\widehat{S}_\infty(x) %= \sum_{\ell=1}^2 f(x,y^{(\ell)}) \log_\infty \theta(y^{(\ell)}) 
%= \frac{1}{x\y}\log_\infty \theta(\y)  + \frac{1}{x(-\y)}\log_\infty \theta(-\y)\big).
%\]
The Puiseux expansion of $\y$ in $\frac1x$ is easy to compute since $\y^2 = x^3+1$ allows us to assume that
\begin{align*}
\y &= \sqrt{x^3+1} = x^{3/2}\sqrt{1+\frac1{x^3}} =  x^{3/2}\Big(1 + \frac{1}{2x^3} - \frac1{8x^6} + \cdots\Big)\\
&= x^{3/2} + \frac{1}{2x^{3/2}} - \frac1{8x^{9/2}} - \cdots.
\end{align*}
 Then
\begin{align*}
\theta(\y) &= x^2 + a x + b + c\y = x^2 + a x + b + c\Big(x^{3/2} + \frac{1}{2x^{3/2}} - \frac1{8x^{9/2}} - \cdots\Big)\\
&= x^2 + c x^{3/2} + ax + b + \frac{c}{2x^{3/2}} - \cdots\\
&= x^2(1+W),\ W = \frac{c}{x^{1/2}} + \frac{a}{x} + \frac{b}{x^2} + \frac{c}{2x^{7/2}} - \cdots,
\end{align*}
so that by Definition \ref{logxi}, 
\begin{align*}
\log_\infty \theta(\y) &= W -\frac12 W^2 + \cdots = \Big(\frac{c}{x^{1/2}} + \frac{a}{x} +\cdots\Big) - \frac12 \Big(\frac{c}{x^{1/2}} + \frac{a}{x} +\cdots\Big) ^2 + \cdots\\
&= \frac{c}{x^{1/2}} + \frac{a-\frac12 c^2}{x} + \cdots.
\end{align*}
Hence
\[
\frac{1}{x\y} \log_\infty \theta(\y) = \frac{\frac{c}{x^{1/2}} + \frac{a-\frac12 c^2}{x} + \cdots}{x^{5/2}\big(1 + \frac{1}{2x^3} - \frac1{8x^6} - \cdots\big)} = \frac{c}{x^3} + \cdots.
\]
In a similar way, one computes that
\[
\frac{1}{x(-\y)} \log_\infty \theta(-\y) = \frac{c}{x^3} + \cdots,
\]
from which we conclude that 
\begin{equation}
\label{SinfxEx}
\widehat{S}_\infty(x) =  \frac{1}{x\y} \log_\infty \theta(\y) + \frac{1}{x(-\y)} \log_\infty \theta(-\y) = \frac{2c}{x^3} + \cdots.
\end{equation}

Using \eqref{S0xEx} and \eqref{SinfxEx}, we obtain
\begin{align*}
\sum_{i=1}^4 \int \frac{dx_i}{x_i y_i} &= \res_0(\widehat{S}_0(x)) -\text{\large$\varPi$}(\widehat{S}_\infty(x)) + C\\
&= \big(\hskip-1pt\log(b+c) - \log(b-c)\big) - 0 + C =  \log\Big(\frac{b+c}{b-c}\Big) + C,
\end{align*}
where we regard $\log\big(\frac{b+c}{b-c}\big)$ as shorthand for $\log(b+c) - \log(b-c)$ (remember that we are being conservative about introducing logs). %, we see that
%\[
%\sum_{i=1}^4 \int\! \frac{dx_i}{x_i y_i} =  \log\Big(\frac{b+c}{b-c}\Big) + C.
%\]
This agrees with Example \ref{NewEx1a}, except that now we know exactly where the logarithm comes from.\hfill$\Diamond$
\end{example}

\begin{example}
\label{NewEx3b}
Finally, consider $f(x,y) = \frac{1}{x^2y} =  \frac{2}{x^2\cdot 2y}$, so $f_2(x) = x^2$.  Since the $\text{\large$\varPi$}$ term again vanishes, we have
\[
\sum_{i=1}^4 \int \frac{dx_i}{x_i^2 y_i} = \res_0(\widehat{S}_0(x))+ C,
\]
where
\[
\widehat{S}_0(x) =  \frac{1}{x^2\y}\log_0 \theta(\y)  + \frac{1}{x^2(-\y)}\log_0 \theta(-\y).
\]
Using the computations from Example \ref{NewEx3a}, we obtain
\begin{align*}
\frac{1}{x^2\y}\log_0 \theta(\y) &= \frac{\log(b+c) + (\frac{a}{b+c} x + \frac{1}{b+c}x^2 + \cdots) - \cdots}{x^2(1 + \frac12x^3 + \cdots)}\\ &= \frac{\log(b+c)}{x^2} + \frac{a}{b+c} \frac1x + \cdots,
\end{align*}
and similarly
\[
\frac{1}{x^2(-\y)}\log_0 \theta(-\y) = -\frac{\log(b-c)}{x^2} - \frac{a}{b-c} \frac1x + \cdots.
\]
Adding these gives $\widehat{S}_0(x)$.  Hence the residue at $x = 0$ is 
\[
\res_0(\widehat{S}_0(x)) = \frac{a}{b+c}  - \frac{a}{b-c}  = -\frac{2ac}{b^2-c^2},
\]
so that
\[
\sum_{i=1}^4 \int\! \frac{dx_i}{x_i^2y_i} =  -\frac{2ac}{b^2-c^2} + C.
\]
Since $\delta$ is the differential with respect to $a,b,c$, applying $\delta$ gives
\[
\sum_{i=1}^4  \frac{dx_i}{x_i^2y_i} = \frac{-2c}{b^2-c^2}\hskip1pt da + \frac{4abc}{(b^2-c^2)^2}\hskip1pt \,db -
\frac{2a(b^2+c^2)}{(b^2-c^2)^2}\hskip1pt dc,
\]
which agrees with what we found in Example \ref{NewEx1b}.\hfill$\Diamond$
\end{example}

\subsection{Indefinite Integrals and Limits of Integration}
\label{Limits} 
Let us say more about the integrals in the sum $v = \sum_{i=1}^\mu \int\! f(x_i,y_i)\,dx_i$.  

For the most part, Abel regards these as indefinite integrals, i.e., antiderivatives (see \cite[p.\ 145]{A1}).  We say ``most part" because Abel mentions limits of integration exactly once in \cite{A1},  where he says that $dv = \sum_{i=1}^\mu f(x_i,y_i)\,dx_i$ gives $v = \sum_{i=1}^\mu \int\! f(x_i,y_i)\,dx_i$ ``by integrating between certain limits" \cite[p.\ 149]{A1}.  Definite integrals appear in many of Abel's papers, though ones with a variable upper limit of integration  are rare.  An example occurs in his great paper on elliptic functions, where he writes \cite[p.\ 266]{A2}
\[
\alpha = \int_0^x \frac{dx}{\sqrt{(1-c^2x^2)(1+e^2x^2)}}.
\]
%(These days, we tell our calculus students not to do this.  Things were more relaxed in Abel's era.)
(This makes $\alpha$ a function of $x$; the inverse function studied by Abel in \cite{A2} is the first published appearance of an elliptic function in the modern sense.) See \cite[p.\ 403]{K1} for further comments about how Abel thought about Abelian integrals.

We can use limits of integration to give an analytic definition of $v$ as follows.  Consider the Riemann surface $S$ associated to the algebraic curve $\chi(x,y) = 0$.  One can show that the differential form $f(x,y) \, dx$ represents a rational differential form on $S$ (and all such forms arise in this way).  Fix a base point $O \in S$.  Given another point $P \in S$, pick a path $\Gamma$ from $O$ to $P$ that avoids the poles of $f(x,y) \, dx$.  Then $\int_\Gamma \hskip-.5pt f(x,y)\,dx$ is defined, and if we vary $P$ slightly, the integral becomes a holomorphic function of $P$.  Informally, we write the integral as $\int_O^P\! f(x,y)\,dx$ since changing $\Gamma$ to a different path from $O$ to the initial choice of $P$ changes the function by a constant.  

With this in mind, recall that the $(x_i,y_i)$ are solutions of $\chi(x,y) = \theta(x,y,\ua) = 0$.  They are smooth points of $\chi(x,y) = 0$ since $x_i$ is transcendental over $\K$ and hence give points on $S$. %, which enables us to write $\int\! f(x_i,y_i)\,dx_i$ as $\int_O^{(x_i,y_i)}\! f(x,y)\,dx$.  Thus
Using $\int_O^{(x_i,y_i)}\! f(x,y)\,dx$ instead of $\int\! f(x_i,y_i)\,dx_i$ gives the function
\begin{equation}
\label{vLimits}
v = \sum_{i=1}^\mu \int_O^{(x_i,y_i)}\! f(x,y)\,dx.
\end{equation}
But $x_i$ and $y_i$ are algebraic functions of the indeterminates $\ua = a_1, a_2, \dots$, so that if we vary the $a_i$ slightly (Abel expresses this by saying that the values of the $a_i$ ``are contained within certain limits'' \cite[p.\ 149]{A1}), the points  $(x_i,y_i)$ will also vary slightly, with the result that \eqref{vLimits} is a holomorphic function of the $a_i$.  This approach to $v$ is used by Brill and Noether \cite[p.\ 322]{BN} in 1894, Baker \cite[p.\ 210]{BakerBook} in 1897, and Forsyth \cite[p.\ 585]{F} in 1918 in their discussions of Abel's Theorem.  Also, in 1857, Boole wrote his version of \eqref{vLimits} as $v = \sum \int^{x_i}\! X\,dx$ \cite[p.\ 745]{B}.

%The treatment by Forsyth \cite[pp.\ 579--586]{F} is especially useful since it leads to a version of \eqref{Line3} 
The treatment by Forsyth \cite[pp.\ 579--586]{F} is especially useful since he proves
\[
v =   \sum_{j=1}^\alpha \res_{\beta_j}\hskip-1pt \Big(\sum_{\ell=1}^n f(x,y^{(\ell)}) \hskip1pt\log\theta(y^{(\ell)})\Big)
- \text{\large$\varPi$}\Big( \sum_{\ell=1}^n f(x,y^{(\ell)}) \hskip1pt \log\theta(y^{(\ell)})\Big)  + C,
\]
where $v$ is as above and the right-hand side uses the analytic interpretation of $\log\theta(y^{(\ell)})$ (he uses 
Boole's $\Theta$ operator from Definition \ref{Thetadef}). 
%where $v$ is as above and the right-hand side uses the analytic interpretation of  the $y^{(\ell)}$ and the various logs (his version uses Boole's $\Theta$ operator from Definition \ref{Thetadef}).  
As noted by Forsyth, 
``To evaluate the right-hand side, only algebraic expansions are necessary; and the result will be some function of the
parameters'' \cite[p.\ 585]{F}.  These ``algebraic expansions" are (up to a constant) the Puiseux series used in Theorem \ref{justifyint}.  The result is that the version of Abel's formula  for $v$ stated in Theorem \ref{justifyint} gives an explicit formula for $v$ from \eqref{vLimits} as a rational and logarithmic function of the $a_i$.

We should also mention that the discussion of Abel's Theorem in Dieudonn\'e \cite{Dieudonne} and Kleiman \cite{K1} uses \eqref{vLimits}.  The interpretation of $v$ given in \eqref{vLimits} is needed in order to understand some of the deeper aspects of Abel's memoir.

\section[test]{Abel's Formula for $\gamma$}
\label{AbelHolo}
Abel's formula for $v = \sum_{i=1}^\mu \int\! f(x_i,y_i)\,dx_i$ states (in modern form) that
\[
v = \sum_{j=1}^\alpha \res_{\beta_j}\hskip-1pt \Big(\sum_{\ell=1}^n f(x,y^{(\ell)}) \log \theta(y^{(\ell)})\Big) -  \text{\large$\varPi$}\Big( \sum_{\ell=1}^n f(x,y^{(\ell)})\log \theta(y^{(\ell)})\Big) + C.
\]
He proves his version of the formula in Section~4 of \cite{A1}.  In the next section, he notes that in this equation, 
\begin{quote}
the second member [the right-hand side] is in general a function of the quantities $a, a', a''$, etc.  If one assumes that it is equal to a constant, this will generally result in certain relations among the quantities; but there are also certain cases for which the second member reduces to a constant, regardless of the values of the quantities $a, a', a''$, etc.  Let's seek out these cases:  \ \cite[p.\ 164]{A1}
\end{quote}

The cases Abel proposes to study lead to conditions on the differential $f(x,y)\,dx$, and %(see Section~\ref{Abelgamma}).  
the integer $\gamma$ appearing in \eqref{Abel62} counts the number of ``arbitrary constants'' that appear when $f(x,y)\,dx$ satisfies these conditions.  In the 19th century, the dimension of a vector space was described by the minimum number of constants needed to express a general element of the space.   It follows that $\gamma$ is the dimension of a vector space of differential forms that satisfy Abel's conditions.  

From a modern perspective, $v = \sum_{i=1}^\mu \int\! f(x_i,y_i)\,dx_i$ is guaranteed to be constant when $f(x,y)\, dx$ is a holomorphic differential on the Riemann surface $S$ associated to $\chi(x,y) = 0$.  Recall that $f(x,y)\, dx$ is \emph{holomorphic} on $S$ if it has no poles on $S$.  When $f(x,y)\,dx$ is holomorphic, one can prove that $dv =  \sum_{i=1}^\mu f(x_i,y_i)\,dx_i = 0$ by the methods of \cite[Essay 9.8]{Edwards}.  It follows that $v = \sum_{i=1}^\mu \int\! f(x_i,y_i)\,dx_i$ is constant.  A more elementary proof that $v$ is constant is sketched in \cite[p.\ 406]{K1}.  

The genus of the Riemann surface $S$ is the dimension of the vector space of holomorphic differentials on $S$.  In Theorem \ref{gammagenus}, we will prove that for most choices of $\chi(x,y)$, 
%holomorphic differentials are precisely the differentials that satisfy Abel's conditions, so that 
Abel's $\gamma$ is the genus of the Riemann surface $S$.   To prove Theorem~\ref{gammagenus}, we begin with Abel's conditions on $f(x,y)\,dx$ (Section~\ref{Abelgammasec}) and then follow Baker \cite{Baker} to interpret Abel's formula \eqref{Abel62} for $\gamma$ in terms of the Newton polygon of $\chi(x,y)$ (Section~\ref{NewtonPolygon}).  From here,  Theorem \ref{gammagenus} is an easy consequence of standard results about toric surfaces (Section~\ref{RelationGenus}).

The relation between $\gamma$ and the genus is our first hint of the real depth of Abel's memoir \cite{A1}.  We will glimpse more of the depth in Sections \ref{HoloDiff} when we discuss holomorphic differentials.  Although Abel never defines the genus in \cite{A1}, he keeps asking questions where the genus is the answer.  

\subsection{Abel's  Inequalities}
\label{Abelgammasec}
Requiring that $v$ be constant is equivalent to the equation
\[
0 = dv = \sum_{j=1}^\alpha \res_{\beta_j}\hskip-1pt \Big(\sum_{\ell=1}^n f(x,y^{(\ell)}) \hskip1pt\frac{\delta \theta(y^{(\ell)})}{\theta(y^{(\ell)})}\Big) -  \text{\large$\varPi$}\Big( \sum_{\ell=1}^n f(x,y^{(\ell)}) \hskip1pt\frac{\delta \theta(y^{(\ell)})}{\theta(y^{(\ell)})} \Big).
\]
The first sum is over the roots $\beta_j$ of $f_2(x) F_0(x)$, where $f(x,y) = \frac{f_1(x,y)}{f_2(x) \chi'(y)}$ and $r(x) = F_0(x) F(x,\ua)$ is the factorization of the resultant from \eqref{Efact}.  One way to guarantee the vanishing of the first sum  is to assume that $f_2(x)F_0(x)$ is constant.  Abel makes this assumption and then turns his attention to the equation
\begin{equation}
\label{Pizero}
0 =  \text{\large$\varPi$}\Big( \sum_{\ell=1}^n \frac{f_1(x,y^{(\ell)})}{\chi'(y^{(\ell)})} \hskip1pt\frac{\delta \theta(y^{(\ell)})}{\theta(y^{(\ell)})} \Big).
\end{equation}
Houzel  \cite[pp.\ 92--94]{Houzel} gives a nice description of Abel's analysis of \eqref{Pizero}.  Our approach will be slightly different.

Given a Puiseux series $R$ in $1/x$ (what the 19th century would call ``descending powers of $x$''), Abel \cite[p.\ 161]{A1} defines $hR$ to be the highest exponent (``haut exposant'') of $x$ that appears.  We denote this by $h(R)$, so that
\[
h(R) = h \text{ when } R = c x^h + \hskip.5pt \text{smaller powers of $x$ and $c \ne 0$}.
\]
Writing $R$ as a standard Puiseux series in $u = 1/x$ with increasing exponents, the usual valuation $\nu$ satisfies $\nu(R) = -h(R)$.  When $R$ is a polynomial in $x$, $h(R)$ is just its degree.

An easy calculation shows that 
\[
h\Big(\frac{\delta \theta(y^{(\ell)})}{\theta(y^{(\ell)})} \Big) \le 0, \quad \ell = 1,\dots,n.
\]
Since {\large$\varPi$} takes the coefficient of $\frac1x$, it follows easily that \eqref{Pizero} holds whenever 
\begin{equation}
\label{Abelminus1}
h\Big(\frac{f_1(x,y^{(\ell)})}{\chi'(y^{(\ell)})}\Big) < -1, \quad \ell = 1,\dots,n.
\end{equation}

%To analyze \eqref{Abelminus1}, we start with the simpler inequalities
Our study of \eqref{Abelminus1} begins with the simpler inequalities
\begin{equation}
\label{Abelminus1simple}
h\Big(\frac{x^a (y^{(\ell)})^m}{\chi'(y^{(\ell)})}\Big) < -1, \quad \ell = 1,\dots,n,
\end{equation}
where $a$ and $m$ are nonnegative integers, i.e., $(a,m) \in \Z^2_{\ge0}$.  Equivalently,
\[
a < h(\chi'(y^{(\ell)})) - m\hskip.5pt h(y^{(\ell)}) - 1, \quad \ell = 1,\dots,n.
\]
We will show in Theorem \ref{Abelgamma} below that Abel's formula for $\gamma$ in \eqref{Abel62} counts the number of pairs $(a,m)$ that satisfy \eqref{Abelminus1simple}.

Since $h(y^{(\ell)})$ appears frequently in what follows, we will write it more simply as $h_\ell = h(y^{(\ell)})$  (Abel uses $\varphi \ell$ instead of $h_\ell$ \cite[p.\ 175]{A1}).  Thus \eqref{Abelminus1simple} is equivalent to
\begin{equation}
\label{htmchi}
a < h(\chi'(y^{(\ell)})) - m\hskip.5pt h_\ell- 1, \quad \ell = 1,\dots,n.
\end{equation}

Following Abel \cite[p.\ 162]{A1}, we order the $y^{(\ell)}$ so that
\begin{equation}
\label{orderyell}
h_1 \ge h_2 \ge \cdots \ge h_n, \quad h_\ell = h(y^{(\ell)})
\end{equation}
and assume that the $y^{(\ell)}$ have distinct highest terms.  This implies that 
\begin{equation}
\label{hdiff}
h(y^{(\ell)}-y^{(k)}) = h(y^{(\ell)}) = h_\ell \ \text{whenever } \ell < k.
\end{equation}
Since $\chi(x,y) = \prod_{\ell=1}^n(y-y^{(\ell)})$ implies $\chi'(y^{(\ell)}) = \prod_{k \ne \ell} (y^{(\ell)}-y^{(k)})$, \eqref{hdiff} easily implies that \eqref{Abelminus1simple} and \eqref{htmchi} are equivalent to 
\begin{equation}
\label{htmineq}
a < h_1  + h_2  + \cdots + h_{\ell-1} + (n -\ell - m)h_\ell - 1, \quad \ell = 1,\dots,n.
\end{equation}
These inequalities have some surprises to offer, as we now explain.

 \subsection{The Newton Polygon and Abel's Number $\gamma$}
\label{NewtonPolygon}
To analyze \eqref{htmineq}, Abel writes $h_\ell = h(y^{(\ell)})$ as a fraction in lowest terms with positive denominator.  After some further work, he arrives at the complicated but explicit formula for $\gamma$ given in \eqref{Abel62}.   We will derive his formula in the proof of Theorem \ref{Abelgamma} given below.  Our approach will use a connection between Abel's $\gamma$ and the Newton polygon of $\chi(x,y)$ noticed by Baker \cite{Baker} in 1894.  

Recall that the \emph{Newton polygon} of $\chi(x,y)$ is the convex hull of the exponent vectors of the nonzero terms of $\chi(x,y)$.  We start with an example studied by Abel and Baker.

\begin{example}
\label{AbelBaker}
Section 8 of Abel's memoir \cite[pp.\ 181--185]{A1} is devoted to the case where
\[
\chi(x,y) = p_0 + p_1y + p_2y^2 + \cdots + p_{12}y^{12} + y^{13}
\]
and $p_0 = p_0(x),\dots,p_{12} = p_{12}(x)$ are polynomials of respective degrees 
\[
2,3,2,3,4,5,3,4,2,3,4,1,1.
\]
Following Baker \cite[Fig.\ 7 on p.\ 139]{Baker}, Figure \ref{NewtonPolygonFig} shows the Newton polygon of $\chi(x,y)$.
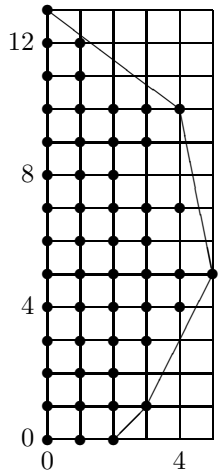
\begin{figure}[H]
\[
\begin{array}{c}
\setlength{\unitlength}{1.25pt}
\begin{picture}(70,135)
\multiput(10,10)(0,10){14}{\line(1,0){50}}
\multiput(10,10)(10,0){6}{\line(0,1){130}}
\multiput(10,10)(10,0){3}{\circle*{3}}
\multiput(10,20)(10,0){4}{\circle*{3}}
\multiput(10,30)(10,0){3}{\circle*{3}}
\multiput(10,40)(10,0){4}{\circle*{3}}
\multiput(10,50)(10,0){5}{\circle*{3}}
\multiput(10,60)(10,0){6}{\circle*{3}}
\multiput(10,70)(10,0){4}{\circle*{3}}
\multiput(10,80)(10,0){5}{\circle*{3}}
\multiput(10,90)(10,0){3}{\circle*{3}}
\multiput(10,100)(10,0){4}{\circle*{3}}
\multiput(10,110)(10,0){5}{\circle*{3}}
\multiput(10,120)(10,0){2}{\circle*{3}}
\multiput(10,130)(10,0){2}{\circle*{3}}
\put(10,140){\circle*{3}}
\put(10,140){\line(4,-3){40}}
\put(50,110){\line(1,-5){10}}
\put(40,20){\line(1,2){20}}
\put(30,10){\line(1,1){10}}
\put(8,1){$0$}
\put(48,1){$4$}
\put(2,8){$0$}
\put(2,48){$4$}
\put(2,88){$8$}
\put(-2,128){$12$}
\end{picture}
\end{array}
\]
\caption{Newton Polygon for Abel's Example}
\label{NewtonPolygonFig}
\end{figure}

Abel does not draw this picture, but he does compute the highest exponent $h_\ell = h(y^{(\ell)})$ of the solutions $y^{(1)},\dots,y^{(13)}$ of $\chi(x,y) = 0$.  His result \cite[p.\ 182]{A1} is
\[
h_1= h_2 = h_3 = \frac43\!  >  h_4 = \cdots = h_8  = \frac15 > h_9 =\cdots = h_{12} = \frac{-1}{\ 2}  > h_{13}=  -1 .
\]
Notice that these are the negative reciprocals of the slopes of the first quadrant edges in Figure \ref{NewtonPolygonFig}.
Abel uses the $h_\ell$ to compute the integers $n', m', \mu', n'', m'', \mu'', \dots$ appearing in his formula \eqref{Abel62} for $\gamma$, with the result that $\gamma = 38$ \cite[pp.\ 182--183]{A1}.

Methods for computing of Puiseux series in descending powers of $x$ have been known since Newton.  Unfortunately, most expositions of Puiseux series, including \cite{Edwards,Walker}, focus on series in ascending powers of $x-\beta$.  The only treatment for descending powers we know appears in the commentary to Minding's 1841 paper \cite{Minding}. This reference explains the link to the Newton polygon.

The above ordering of $h_1,\dots,h_{13}$ follows \eqref{orderyell}, which is needed for Abel's calculation of $\gamma$.  However, this ordering is also relevant to the Newton polygon in Figure \ref{NewtonPolygonFig}.  Ignoring the coordinate axes, the outer normals of the other edges of the Newton polygon are of the form $(1,h_\ell)$, which gives the outer normals
\[
\nu_1 = (1,\tfrac43),\ \nu_2 =  (1,\tfrac15),\ \nu_3 =  (1,\tfrac{-1}{\ 2}),\ \nu_4 =  (1,-1).
\]
The convexity in Figure \ref{NewtonPolygonFig} forces the  $y$-coordinates of the $\nu_i$ to decrease.

Lemma \ref{NewtonLem} below will prove that $(a,m)$ satisfies Abel's inequalities \eqref{htmineq} if and only if $(a + 1, m+1)$ is an interior lattice point of the Newton polygon of $\chi(x,y)$.  Proposition \ref{NewtonCor} and Remark \ref{Abelgammadef} will then show that Abel's initial definition of $\gamma$ is the number of interior lattice points, which for Figure \ref{NewtonPolygonFig} easily gives $\gamma = 38$.  \hfill$\Diamond$
\end{example}

In general, $\chi(x,y)$ is monic of degree $n$ in $y$ with Newton polygon $P$.  We will assume that $\chi(x,y)$ has a nonzero constant term, so that the line segment from the origin to $(0,n)$ is an edge of $P$, as in Figure \ref{NewtonPolygonFig}.  Aside from the coordinate axes, the remaining edges of $P$ are made up of $n$ line segments, where as we go from top to bottom, the endpoints of the line segments are
\[
Q_0 = (0,n),  Q_1 = (h_1,n-1), Q_2 = (h_1+h_2,n-2), \dots, Q_n = (h_1+ \cdots +h_n, 0),
\]
where $h_\ell = h(y^{(\ell)})$.  This follows from the commentary to \cite{Minding}.
The $\ell$th segment $Q_{\ell - 1}Q_\ell$ satisfies $Q_{\ell -1} - Q_\ell = (-h_\ell, 1)$ with outer normal $(1,h_\ell)$
relative to $P$ (so each edge of $P$ consists of one or more of these line segments). 
By \eqref{orderyell}, the outer normals $(1,h_1),\dots,(1,h_n)$ have decreasing $y$-coordinates as we go from top to bottom.  This is consistent with the convexity of $P$.  

In Figure \ref{NewtonPolygonFig}, the line segments $Q_0Q_1, Q_1Q_2, Q_2Q_3$ have outer normal $(1,\frac43)$ and form the top edge.  Similarly, the next five line segments have outer normal $(1,\frac15)$ and form the second edge, and so on.

The above description of $P$ implies that a point $Q = (u,v)$ in the plane lies in $P$ if and only if
\begin{equation}
\label{Pineq}
u \ge 0, \ v \ge 0, \text{ and } (Q - Q_{\ell-1}) \cdot (1,h_\ell) \le 0 \text{ for } \ell = 1,\dots,n.
\end{equation}
To explain the final inequalities, note that $Q$ lies on the line connecting $Q_{\ell -1}$ and $Q_\ell$ if and only 
$ (Q - Q_{\ell-1}) \cdot (1,h_\ell) = 0$.  Since $(1,h_\ell)$ is an outer normal, we must have $(Q - Q_{\ell-1}) \cdot (1,h_\ell)\le 0$ for points in $P$.

\begin{lemma}
\label{NewtonLem}
Let $P$ be the Newton polygon of $\chi(x,y)$, and assume that $\chi(x,y)$ has a nonzero constant term. Also, for the solutions $y^{(1)},\dots,y^{(n)}$ of $\chi(x,y) = 0$, assume that the highest terms of their Puiseux expansions in $1/x$ are distinct.  Then a lattice point $(a,m) \in \Z^2_{\ge0}$  satisfies the inequalities  \eqref{htmineq}, or equivalently
\eqref{Abelminus1simple},   if and only if $(a +1, m+1)$ is an interior lattice point of $P$.  
\end{lemma}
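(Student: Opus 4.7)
The plan is to translate the statement into a direct check using the explicit description of $P$ already supplied in \eqref{Pineq}: a point $(u,v)$ lies in $P$ if and only if $u \ge 0$, $v \ge 0$, and $(Q - Q_{\ell-1}) \cdot (1, h_\ell) \le 0$ for $\ell = 1, \dots, n$, where $Q_{\ell-1} = (h_1 + \cdots + h_{\ell-1},\, n - \ell + 1)$. An interior lattice point is one for which every one of these inequalities holds strictly, so my approach is simply to substitute $(u,v) = (a+1, m+1)$ into each inequality and expand.

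The two axis constraints $a+1 > 0$ and $m+1 > 0$ are automatic from the hypothesis $(a,m) \in \Z^2_{\ge 0}$, and so impose no new condition. For each $\ell \in \{1, \dots, n\}$, the strict form of the remaining inequality in \eqref{Pineq} becomes
\[
\bigl((a+1) - (h_1 + \cdots + h_{\ell-1})\bigr) + h_\ell\bigl((m+1) - (n - \ell + 1)\bigr) < 0,
\]
which, after collecting terms, is exactly
\[
a < h_1 + h_2 + \cdots + h_{\ell-1} + (n - \ell - m)\, h_\ell - 1.
\]
This matches \eqref{htmineq} term for term. Hence the system \eqref{htmineq} for $(a,m)$ is equivalent to $(a+1, m+1)$ lying strictly inside every defining half-plane of $P$, i.e., to being an interior lattice point.

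The only step that calls for any real care is matching conventions: one has to be sure that $(1, h_\ell)$ really is the outward normal to the segment $Q_{\ell-1} Q_\ell$ and that the two coordinate-axis boundary pieces of $P$ are accounted for. Both points are already spelled out in the paragraph preceding \eqref{Pineq} (the convexity/decreasing-slope argument, and the axis conditions $u \ge 0$, $v \ge 0$), so once those are accepted the lemma reduces to the one-line algebraic identification above and no further obstacle arises.
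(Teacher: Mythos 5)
Your proposal is correct and follows essentially the same route as the paper's proof: both substitute $(a+1,m+1)$ into the strict versions of the inequalities in \eqref{Pineq}, note that the first-quadrant conditions are automatically strict, and expand the edge inequalities with $Q_{\ell-1}=(h_1+\cdots+h_{\ell-1},\,n-\ell+1)$ to recover \eqref{htmineq} exactly. The only detail the paper states explicitly that you leave implicit is that the equivalence of \eqref{htmineq} with \eqref{Abelminus1simple} rests on the hypothesis that the highest terms of the $y^{(\ell)}$ are distinct, but that was established in the preceding discussion, so this is not a gap.
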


\begin{proof}
First, the hypothesis on the highest terms of the $y^{(\ell)}$ guarantees that \eqref{Abelminus1simple} and  \eqref{htmineq} are equivalent.  Then observe that $(a +1, m+1)$ lies in the interior of the first quadrant.  Thus it is an interior point of $P$ if and only if
\[
\big((a +1, m+1) - Q_{\ell-1}\big) \cdot (1,h_\ell) < 0, \quad \ell = 1,\dots, n.
\]
Using $Q_{\ell-1}  = (h_1+ \cdots + h_{\ell-1},n-\ell+1)$, these inequalities can be written as
\[
a +1 + (m+1)h_\ell - \big(h_1+ \cdots + h_{\ell-1} + (n-\ell+1)h_l\big) < 0, \quad \ell = 1,\dots, n,
\]
which are easily seen to be equivalent to \eqref{htmineq}.
\end{proof}

\begin{proposition}
\label{NewtonCor}
With the same hypotheses as Lemma \ref{NewtonLem}, consider the vector space $\Omega$ of differential forms $f(x,y)\hskip.5pt dx = \frac{f_1(x,y)}{\chi'(y)}\hskip.5pt dx$, where 
\[
f_1(x,y) = \sum c_{a,m} x^a y^m,\quad c_{a,m} \in \C
\]
and the sum is over all pairs $(a,m)$ for which $(a+1,m+1)$ is an interior lattice point of $P$.  Also, for $m = 0,\dots,n-2$, let $\tau_m$ be the largest integer such that $(\tau_m+1,m+1)$ is an interior lattice point of $P$, with $\tau_m = -1$ when $P$ has no such interior lattice point. Then{\rm:}
\begin{itemize}
\item[\rm(1)] $v = \sum_{i=1}^\mu \int\! f(x_i,y_i)\,dx_i$ is constant for every $f(x,y)\,dx$ in $\Omega$.
\item[\rm(2)] $\tau_m = \lceil h_1+\cdots + h_{n-m-1}\rceil - 2$.
\item[\rm(3)] $\Omega$ is a vector space of dimension
\begin{equation}
\label{mygammadef}
\dim(\Omega) = \#\{\text{\rm interior lattice points of } P\} = \sum_{m=0}^{n-2} (\tau_m + 1).
\end{equation}
\end{itemize}
\end{proposition}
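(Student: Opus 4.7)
My plan is to handle the three parts in order, since each rests on machinery already developed: part (1) follows from Theorem \ref{AThmdv} and Lemma \ref{NewtonLem}, part (2) reduces to a case analysis on the inequalities \eqref{htmineq}, and part (3) is a straightforward count plus a linear independence check.

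For part (1), since $f(x,y) = f_1(x,y)/\chi'(y)$ we have $f_2(x) = 1$, so Theorem \ref{AThmdv} gives $dv = \sum_{j} \res_{\beta_j}(S(x)) - \text{\large$\varPi$}(S(x))$ where the $\beta_j$ are the roots of $F_0(x)$.  Under the convention that $F_0(x)$ is constant (the generic case discussed at the start of Section \ref{Abelgammasec}), the residue sum is empty.  For the $\text{\large$\varPi$}$ term I would expand $f_1 = \sum c_{a,m} x^a y^m$ and apply Lemma \ref{NewtonLem} monomially: since each $(a+1, m+1)$ is an interior lattice point of $P$, the inequalities \eqref{Abelminus1simple} hold for every $(a,m)$ occurring.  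Because the highest-exponent function $h$ is sub-additive, the bound \eqref{Abelminus1} then holds for $f_1$ itself, forcing \eqref{Pizero}.  Hence $dv = 0$ and $v$ is constant.

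For part (2), the main obstacle is to verify that among the $n$ bounds
\[
a < h_1 + \cdots + h_{\ell-1} + (n-\ell-m)h_\ell - 1,\quad \ell = 1,\dots,n,
\]
the choice $\ell = n-m$ is globally binding.  Setting $\ell = n-m$ kills the $(n-\ell-m)h_\ell$ term and yields $a < h_1 + \cdots + h_{n-m-1} - 1$.  To see that no other $\ell$ gives a sharper bound I would split into two cases using the ordering \eqref{orderyell}.  For $\ell < n-m$ the sum $h_\ell + h_{\ell+1} + \cdots + h_{n-m-1}$ has $n-m-\ell$ terms, each $\le h_\ell$, giving $(n-\ell-m)h_\ell \ge h_\ell + h_{\ell+1} + \cdots + h_{n-m-1}$; for $\ell > n-m$ the dual estimate $h_{n-m} + \cdots + h_{\ell-1} \ge (\ell + m - n)h_\ell$ follows from $h_k \ge h_\ell$ whenever $k < \ell$.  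In both cases the $\ell$th bound is no tighter than the $(n-m)$th.  The largest integer strictly less than $h_1 + \cdots + h_{n-m-1} - 1$ is $\lceil h_1 + \cdots + h_{n-m-1}\rceil - 2$, which is the claimed formula for $\tau_m$.

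For part (3), I would count the interior lattice points of $P$ by horizontal slices $y = m+1$.  Since $P \subseteq \{0 \le y \le n\}$, only $0 \le m \le n-2$ contributes; on each such slice, the $x$-coordinates of interior lattice points of $P$ are $1, 2, \dots, \tau_m + 1$ by the definition of $\tau_m$, giving $\tau_m + 1$ points (or none when $\tau_m = -1$).  Summing over $m$ recovers \eqref{mygammadef}.  Linear independence of the forms $x^a y^m\, dx/\chi'(y)$ reduces to linear independence of $\{x^a y^m\}$ in $\K(x)[y]/\langle \chi\rangle$, which is immediate since $m \le n-2 < n = \deg_y \chi$.
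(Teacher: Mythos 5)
Your proposal is correct, and parts (1) and (3) run essentially parallel to the paper's proof (same reduction of (1) to \eqref{Abelminus1} and \eqref{Pizero} via Lemma \ref{NewtonLem} under Abel's assumption that $f_2(x)F_0(x)$ is constant, and the same slice-by-slice count in (3), with your explicit linear-independence remark being a small bonus over the paper's ``clearly''). Part (2) is where you genuinely diverge: the paper argues geometrically, observing that $Q_{n-m-1}=(h_1+\cdots+h_{n-m-1},\,m+1)$ lies on the right-hand boundary of $P$, so the right-most interior lattice point at height $m+1$ has first coordinate $\lceil h_1+\cdots+h_{n-m-1}\rceil-1$; you instead work directly with the inequalities \eqref{htmineq} and prove, using the ordering \eqref{orderyell}, that the constraint with $\ell=n-m$ is binding, reducing the whole system to $a<h_1+\cdots+h_{n-m-1}-1$. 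The two are equivalent in content (your binding-constraint computation is the algebraic shadow of $Q_{n-m-1}$ being a boundary point), but your route is self-contained given \eqref{htmineq} and monotonicity of the $h_\ell$, whereas the paper's leans on the polygon description already set up for Lemma \ref{NewtonLem} and is shorter once that picture is in place. One small point you gloss over: when no interior lattice point exists at height $m+1$, the convention sets $\tau_m=-1$, and matching this with $\lceil h_1+\cdots+h_{n-m-1}\rceil-2$ requires $0<h_1+\cdots+h_{n-m-1}\le 1$ (the paper notes this in a one-line case); your ``largest integer strictly less than'' computation silently assumes this case comes out right, so a sentence checking it would make part (2) airtight.
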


\begin{proof}
For (1), let $x^a y^m$ be a monomial appearing in $f_1(x,y)$.  Then $(a,m)$ satisfies \eqref{Abelminus1simple} by Lemma \ref{NewtonLem}, which easily implies that $f_1(x,y)$ satisfies \eqref{Abelminus1}.  By the discussion preceding \eqref{Abelminus1}, it follows that $f_ 1(x,y)$ satisfies \eqref{Pizero}.  As explained at the beginning of Section \ref{Abelgammasec}, we conclude that $v = \sum_{i=1}^\mu \int\! f(x_i,y_i)\,dx_i$ is constant.

For (2), the discussion leading up to \eqref{Pineq} shows that $Q_\ell = (h_1+\cdots+h_\ell, n-\ell)$ is on the boundary of $P$ for $\ell = 0,\dots,n$.  Thus $Q_{n-m-1} = (h_1+\cdots+h_{n-m-1}, m+1)$ is on the right-hand side of the boundary of $P$.  If $P$ has a right-most interior lattice point at height $m+1$, then it can be written $(\tau_m+1,m+1)$, where
\[
\tau_m+1 = \lceil h_1+\cdots + h_{n-m-1}\rceil - 1, \text{ so } \hskip1.5pt \tau_m = \lceil h_1+\cdots + h_{n-m-1}\rceil - 2.
\]
If $P$ has no interior lattice points at height $m+1$, then $0 < h_1+\cdots + h_{n-m-1} \le 1$.  In this case, the above formula gives $\tau_m = -1$, which is what we want.

Finally, for (3), $\dim(\Omega)$ is clearly the number of interior lattice points of $P$.  Every interior lattice point can be written $(a+1,m+1)$ for integers $a,m \ge 0$, and note that $m + 1 < n$ since $(0,n)$ is the only point of $P$ at height $n$.  Thus $0 \le m \le n-2$.  Then $0 \le a \le \tau_m$ by the definition of $\tau_m$, so that the total number of interior lattice points is $\sum_{m=0}^{n-2} (\tau_m + 1)$.  Note that this works when $\tau_m = -1$.
\end{proof}

\begin{remark}
\label{Abelgammadef}
The sum $\sum_{m=0}^{n-2} (\tau_m + 1)$ is closely related to the definition of $\gamma$ given in \cite{A1}.  To analyze 
\eqref{Abelminus1}, Abel writes $f_1(x,y)$ in the form
\[
f_1(x,y) = t_0 + t_1 y + \cdots + t_{n-1} y^{n-1}, 
\]
where $t_m = t_m(x)$ is a polynomial in $x$ of degree $ht_m$ \cite[p.\ 162]{A1}.  On p.\ 163,  he takes ``la plus grande valeur de $ht_m$'', which is his version of our $\tau_m$ from Proposition~\ref{NewtonCor}.  For this value of $ht_m$, he designates  by $\gamma$ the number
\[
ht_1+ \cdots + ht_{n-2} + n - 1 = \sum_{m=0}^{n-2} (ht_m+1)
\]
(see \cite[p.\ 167]{A1}).
This is consistent with the sum $\sum_{m=0}^{n-2} (\tau_m + 1)$ in \eqref{mygammadef}.  For Abel, $\gamma$ is the number of ``arbitrary constants'' in $f_1(x,y)$ and hence in $\frac{f_1(x,y)}{\chi'(y)}\hskip.5pt dx$.

The formula for $\tau_m$ given in Proposition \ref{NewtonCor} compares nicely with Abel's formula for the biggest value of $ht_m$ \cite[(53) on p.\ 163]{A1}, namely
\[
ht_m = hy' + hy'' + \cdots + hy^{(n-m-1)} - 2 + \varepsilon_{n-m-1},
\]
where $hy^{(\ell)} = h_\ell$ and $\varepsilon_{n-m-1}$ is ``a positive number less than unity which makes this equation possible''  (as noted by Houzel \cite[p.\ 92]{Houzel}, $\varepsilon_{n-m-1}$ can be zero). 
\end{remark}

We now turn our attention to Abel's formula \eqref{Abel62} for $\gamma$, where we use \eqref{mygammadef} from Proposition \ref{NewtonCor} as the definition of $\gamma$.   Our main tool will be Pick's formula, which states that
\begin{equation}
\label{Pick}
\#\{\text{\rm interior lattice points of } P\} = \mathrm{Area}(P) -\tfrac12 \#\{\text{lattice points of } \partial P\} + 1,
\end{equation}
where $\partial P$ denotes the boundary of $P$.
A proof of \eqref{Pick} using toric geometry is given in \cite[Example 9.4.4]{CLS}.  See  \cite{GS} for references to more elementary proofs.

As above, we assume that $\chi(x,y)$ has a nonzero constant term.  The first step in using \eqref{Pick} is to compute the area of $P$. Number the first quadrant edges of $P$ from $1$ to $\varepsilon$, where the first edge starts at $(0,n)$ and the $\varepsilon$th edge ends on the $x$-axis.   Write the outer normal of the $i$th edge as
\[
\nu_i = \Big(1,\frac{m_i}{\mu_i}\Big),
\]
where $m_i, \mu_i \in \Z$, $\mu_i > 0$, and $\gcd(m_i, \mu_i) = 1$.  Also let $n_i+1$ be the number of lattice points on the $i$th edge.  Suppose the top endpoint of the $i$th edge is the lattice point $(a,b)$.  The edge has slope $\frac{-\mu_i}{\ m_i}$ with $\gcd(m_i, \mu_i) = 1$, so the next lattice point on the edge must be $(a+m_i,b-\mu_i)$.  Since the edge has $n_i+1$ lattice points,  the bottom endpoint of the $i$th edge is $(a+n_im_i, b-n_i\mu_i)$.  Thus the $i$th edge is made up of $n_i\mu_i$ of the line segments $Q_{\ell-1}Q_\ell$ defined above.  Figure \ref{2slices} shows the first two edges of $P$ when $m_2 \ge 0$.

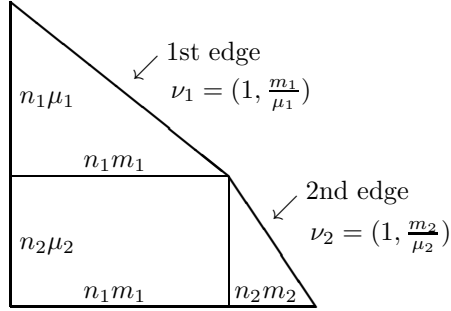
\begin{figure}[H]
\setlength{\unitlength}{1.1pt}
\[
\begin{array}{c}
\begin{picture}(155,105)
\thicklines
\put(0,0){\line(1,0){105}}
\put(0,0){\line(0,1){105}}
\put(0,105){\line(5,-4){75}}
\put(75,45){\line(2,-3){30}}
\thinlines
\put(0,45){\line(1,0){75}}
\put(75,0){\line(0,1){45}}
\put(25,3){$n_1m_1$}
\put(25,48){$n_1m_1$}
\put(77,3){$n_2m_2$}
\put(3,20){$n_2\mu_2$}
\put(3,70){$n_1\mu_1$}
\put(90,32){$\swarrow$}
\put(101.5,37){2nd edge}
\put(103,23){$\nu_2 = (1,\tfrac{m_2}{\mu_2})$}
\put(42,80){$\swarrow$}
\put(53.5,85){1st edge}
\put(55,72){$\nu_1 = (1,\tfrac{m_1}{\mu_1})$}
\end{picture}
\end{array}
\]
\caption{First Two Edges of the Newton Polygon}
\label{2slices}
\end{figure}

\begin{lemma}
\label{AreaP}
When the constant term of $\chi(x,y)$ is nonzero, the Newton polygon $P$ of $\chi(x,y)$  has area
\begin{align*}
\mathrm{Area}(P) =\ &n_1\mu_1 \cdot \frac{n_1 m_1}2 + n_2\mu_2\Big( n_1 m_1 +  \frac{n_2 m_2}2\Big) + 
n_3\mu_3\Big( n_1 m_1 + n_2 m_2 +  \frac{n_3 m_3}2\Big)\\ &+ \cdots  +
n_\varepsilon\mu_\varepsilon\Big( n_1 m_1 + \cdots + n_{\varepsilon-1} m_{\varepsilon - 1} +  \frac{n_\varepsilon m_\varepsilon}2\Big).
\end{align*}
\end{lemma}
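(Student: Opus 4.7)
The plan is to decompose $P$ into horizontal slabs, one per edge of the boundary outside the coordinate axes, and recognize each slab as a trapezoid.

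First I would label the vertices of $P$ along the boundary. Starting at $(0,n)$ and traveling down along the edges numbered $1,\dots,\varepsilon$, set $A_i = n_1 m_1 + \cdots + n_{i-1} m_{i-1}$ and $B_i = n_1 \mu_1 + \cdots + n_{i-1} \mu_{i-1}$, with $A_1 = B_1 = 0$. Since the $i$th edge has outer normal $(1, m_i/\mu_i)$ with $\gcd(m_i,\mu_i)=1$ and $\mu_i > 0$, its primitive edge vector (going downward) is $(m_i,-\mu_i)$; and since there are $n_i+1$ lattice points on it, the edge goes from $(A_i, n - B_i)$ to $(A_{i+1}, n - B_{i+1}) = (A_i + n_i m_i,\ n - B_i - n_i \mu_i)$. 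The fact that the $\varepsilon$th edge terminates on the $x$-axis then says $B_{\varepsilon+1} = n$, which is consistent with the picture.

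Next I would slice $P$ by the $\varepsilon - 1$ horizontal lines $y = n - B_i$, $i = 2,\dots,\varepsilon$, producing $\varepsilon$ trapezoidal strips. The $i$th strip is bounded by $y = n - B_i$ on top, $y = n - B_{i+1}$ on bottom, the segment of the $y$-axis on the left, and the $i$th edge of $P$ on the right. Because $P$ is convex and its right boundary in this horizontal range is exactly the $i$th edge, this strip is a genuine trapezoid (this remains true when $m_i < 0$, in which case it is slanted the other way, but the formula is unchanged). Its parallel sides have lengths $A_i$ and $A_{i+1} = A_i + n_i m_i$ and its height is $n_i \mu_i$, so its area is
\[
\frac{A_i + A_{i+1}}{2}\cdot n_i\mu_i \;=\; n_i \mu_i \left( A_i + \frac{n_i m_i}{2}\right) \;=\; n_i \mu_i \left( \sum_{j<i} n_j m_j + \frac{n_i m_i}{2}\right).
\]
Summing over $i = 1, \dots, \varepsilon$ gives precisely the stated formula for $\mathrm{Area}(P)$.

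The only real subtlety is checking that the slab decomposition covers $P$ exactly once and that the trapezoid formula is correct even when some $m_i$ are negative (as in Abel's example, where $m_3, m_4 < 0$). Since in that case the top edge of the trapezoid still has length $A_i$ and the bottom edge still has length $A_{i+1}$ (both nonnegative, because $A_i$ is the $x$-coordinate of a point of the convex polygon $P$ lying in the first quadrant), the standard trapezoid area formula applies without modification. The hypothesis that $\chi(x,y)$ has a nonzero constant term is used to ensure that the boundary of $P$ reaches both coordinate axes, so the slab decomposition starts at $y=n$ and ends at $y=0$ without missing any region.
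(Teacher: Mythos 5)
Your proof is correct and takes essentially the same route as the paper: slice $P$ horizontally at the endpoints of the first-quadrant edges into $\varepsilon$ strips and sum the trapezoid areas $n_i\mu_i\bigl(A_i + \tfrac{n_i m_i}{2}\bigr)$, the top strip being the degenerate (triangular) case $A_1=0$. Your explicit coordinates for the vertices and the remark about negative $m_i$ merely spell out what the paper computes ``similarly.''
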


\begin{proof}
Slice $P$ horizontally at the endpoints of the first quadrant edges.  This gives $\varepsilon$ slices, where the top slice is a triangle and the remaining slices are trapezoids (except possibly for the bottom slice, which might be a triangle).  The first two slices are shown in Figure \ref{2slices}.  The area of the triangular first slice is
\[
\frac12 \cdot n_1\mu_1 \cdot n_1 m_1 = n_1\mu_1 \cdot \frac{n_1 m_1}2,
\]
and for the trapezoidal second slice, the area is
\[
n_2\mu_2 \cdot \frac{ n_1 m_1 + ( n_1 m_1 +  n_2 m_2)}2 =  n_2\mu_2\Big( n_1 m_1 +  \frac{n_2 m_2}2\Big).
\]
The areas of the remaining slices are computed similarly, and the lemma follows.
\end{proof}

The next step is to compute the number of lattice points on the boundary of $P$.  The left edge of $P$ goes from the origin to $(0,n)$ and has $n+1$ lattice points, and the first quadrant edges have $n_1+1,n_2+1, \dots, n_\varepsilon + 1$ lattice points.  The bottom edge goes from the origin to $(n_1m_1 + \cdots + n_\varepsilon m_\varepsilon, 0)$ (this follows by extending Figure~\ref{2slices} to show all edges).  Hence the bottom edge has $n_1m_1 + \cdots + n_\varepsilon m_\varepsilon + 1$ lattice points.  (When $n_1m_1 + \cdots + n_\varepsilon m_\varepsilon = 0$, the bottom edge reduces to the origin, but $n_1m_1 + \cdots + n_\varepsilon m_\varepsilon + 1 = 1$ still counts the  number of lattice points.) Putting this all together and being careful not to double-count the vertices of $P$, we get
\[
n + n_1  + \cdots + n_\varepsilon + n_1m_1 + \cdots + n_\varepsilon m_\varepsilon 
\]
lattice points on the boundary of $P$.  Using $n = n_1 \mu_1 + \cdots + n_\varepsilon \mu_\varepsilon$ (the heights of the $\varepsilon$ slices add up to the total height $n$), we can rewrite this as 
\begin{equation}
\label{boundary}
n_1 \mu_1 + \cdots + n_\varepsilon \mu_\varepsilon + n_1(m_1+1) + \cdots + n_\varepsilon (m_\varepsilon +1).
\end{equation}
We are now ready to state and prove Abel's formula for $\gamma$ \cite[(62) on p.\ 168]{A1}.

\begin{theorem}
\label{Abelgamma}
When the constant term of $\chi(x,y)$ is nonzero, the number $\gamma$ of interior lattice points of the Newton polygon $P$ of $\chi(x,y)$ is given by
\begin{align*}
\gamma = \ &n_1\mu_1 \Big( \frac{n_1 m_1 -1}2\Big) + n_2\mu_2\Big( n_1 m_1 +  \frac{n_2 m_2-1}2\Big)\ + \ \\ 
&n_3\mu_3\Big( n_1 m_1 + n_2 m_2 +  \frac{n_3 m_3-1}2\Big) + \cdots  + \ \\
&n_\varepsilon\mu_\varepsilon\Big( n_1 m_1 + \cdots + n_{\varepsilon-1} m_{\varepsilon - 1} +  \frac{n_\varepsilon m_\varepsilon-1}2\Big)\\
&-\ \frac{n_1(m_1+1)}2 - \cdots - \frac{n_\varepsilon (m_\varepsilon +1)}2 + 1.
\end{align*}
\end{theorem}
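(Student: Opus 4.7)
The plan is to obtain the formula by a direct substitution into Pick's formula \eqref{Pick}, using Lemma \ref{AreaP} for the area of $P$ and the count \eqref{boundary} for the boundary lattice points. All the groundwork has already been done; what remains is essentially a bookkeeping check.

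More precisely, first I would write
\[
\gamma = \mathrm{Area}(P) - \tfrac12 \#\{\text{lattice points of } \partial P\} + 1,
\]
which is applicable since the hypothesis that $\chi(x,y)$ has a nonzero constant term means $P$ genuinely has an interior and a well-defined boundary as described. Next I would substitute the area formula from Lemma \ref{AreaP}, obtaining the sum of $n_i\mu_i\bigl(n_1m_1+\cdots+n_{i-1}m_{i-1}+\tfrac{n_im_i}{2}\bigr)$ over $i=1,\ldots,\varepsilon$. Then I would substitute \eqref{boundary} for the boundary count; since $-\tfrac12$ times that count equals
\[
-\sum_{i=1}^\varepsilon \frac{n_i\mu_i}{2} - \sum_{i=1}^\varepsilon \frac{n_i(m_i+1)}{2},
\]
combining with the area gives the claimed formula after absorbing the $-\tfrac{n_i\mu_i}{2}$ into the parenthesized expression: for each $i$, the term $n_i\mu_i\bigl(n_1m_1+\cdots+n_{i-1}m_{i-1}+\tfrac{n_im_i}{2}\bigr) - \tfrac{n_i\mu_i}{2}$ equals $n_i\mu_i\bigl(n_1m_1+\cdots+n_{i-1}m_{i-1}+\tfrac{n_im_i-1}{2}\bigr)$, which is precisely the $i$th summand appearing in the theorem.

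The remaining $-\tfrac{n_i(m_i+1)}{2}$ terms and the $+1$ match the last line of Abel's formula exactly, so the verification is complete. The only potential obstacle is notational rather than mathematical: making sure the indexing of edges, the convention that slices are numbered from top to bottom starting at $(0,n)$, and the assumption that the bottom edge of $P$ lies along the $x$-axis (guaranteed by $\chi$ having a nonzero constant term) all mesh correctly, and that no lattice point on $\partial P$ is double-counted. With the careful statement of Lemma \ref{AreaP} and \eqref{boundary} already in place, this verification is essentially a line-by-line algebraic check, so no genuine difficulty arises.
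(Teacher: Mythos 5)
Your proposal is correct and follows essentially the same route as the paper: substitute the area from Lemma \ref{AreaP} and the boundary count \eqref{boundary} into Pick's formula \eqref{Pick}, then absorb each $-\tfrac{n_i\mu_i}{2}$ into the corresponding area term to produce $\tfrac{n_im_i-1}{2}$. The algebra you spell out is exactly the ``easy algebra'' the paper leaves to the reader.
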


\begin{proof}
By Pick's formula \eqref{Pick}, the number of interior lattice points of $P$ is
\[
\mathrm{Area}(P) - \tfrac12 \#\{\text{boundary lattice points of } P\} + 1.
\]
We computed the area of $P$ in Lemma \ref{AreaP} and the number of boundary points in \eqref{boundary}.  Substituting these results into Pick's formula and doing some easy algebra gives the formula for $\gamma$ stated in the theorem.
\end{proof}

\begin{remark}
With some slight adjustments in notation (replace $\mu_1, \mu_2, \dots, \mu_\varepsilon$ with $\mu', \mu'', \dots,  \mu^{(\varepsilon)}$, and similarly for $n_i$ and $m_i$),  the formula  in Theorem \ref{Abelgamma} is identical to \eqref{Abel62}, which is Abel's formula (62) on \cite[p.\ 168]{A1}.    

Abel introduces the fractions $\frac{m_1}{\mu_1},\dots,\frac{m_\varepsilon}{\mu_\varepsilon}$ in equation (54) on \cite[p.\ 164]{A1} as the distinct values that occur among $h_1,\dots,h_n$, just as we did above.  However, his definition of $n_i$ differs from ours.  Given one $\ell$ with $h_\ell = h(y^{(\ell)}) = \frac{m_i}{\mu_i}$, the Puiseux algorithm creates $\mu_i$ values of $\ell$ for which $h(y^{(\ell)}) = \frac{m_i}{\mu_i}$ (these $\mu_i$ solutions have the same highest exponent but their highest terms differ by $\mu_i$th roots of unity).  It follows that the number of $\ell$'s for which $h_\ell = h(y^{(\ell)}) = \frac{m_i}{\mu_i}$ is a multiple of $\mu_i$.  This is Abel's $n_i$, which he defines in equation (55) on \cite[p.\ 164]{A1}.  The Puiseux algorithm shows that Abel's $n_i$ agrees with ours.  

Our proof of Abel's formula for $\gamma$ is very different from Abel's original argument \cite[pp.\ 164--168]{A1}, though his proof, summarized by Houzel in \cite[pp.\ 92--93]{Houzel}, is complete modulo some minor details. 
\end{remark}

\subsection{Relation to the Genus}
\label{RelationGenus}
It remains to discuss when Abel's $\gamma$  is the genus in the modern sense.  This is a complicated question, especially because the curve $\chi(x,y) = 0$ may be singular, and further problems can arise at infinity when working in the projective plane. Sylow [4, Vol. 2, pp. 296--300], Brill and Noether [7, pp. 215--222], and Houzel [20, pp. 92--96] explore some of these challenges, and Kleiman discusses Abel's $\gamma$ and the genus in \cite[pp.\ 408--409]{K1}. 

We will take a different approach which focuses on the Newton polygon $P$ of $\chi(x,y) = 0$.  In toric geometry, the polygon $P$ determines a projective toric surface $X_P$ that contains a projective curve defined by $\chi(x,y) = 0$, similar to how this equation defines a curve in  the projective plane. The advantage of $X_P$ is that the curve defined by $\chi(x,y) = 0$ in $X_P$ often avoids the problems at infinity mentioned in the previous paragraph.  We refer to \cite{CLS} for an introduction to toric geometry.

Here is our main result about $\gamma$ and the genus.

\begin{theorem}
\label{gammagenus}
Assume that the constant term of $\chi(x,y)$ is nonzero.
If $\chi(x,y)$ is sufficiently general within its Newton polygon $P$, then the genus of the Riemann surface $S$ associated to $\chi(x,y) = 0$ is the integer $\gamma$ in Abel's formula \eqref{Abel62}, and $S$ can be identified with the smooth projective curve in $X_P$ defined by $\chi(x,y) = 0$.
\end{theorem}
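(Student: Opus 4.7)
The plan is to realize the Riemann surface $S$ as a smooth projective curve $C_P$ sitting inside the toric surface $X_P$, and then apply the toric adjunction formula. The resulting genus computation reduces to Pick's formula \eqref{Pick}, whose right-hand side equals Abel's $\gamma$ by Proposition \ref{NewtonCor}(3) and Theorem \ref{Abelgamma}.

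First I would set up the geometry. The Newton polygon $P$ determines a normal projective toric surface $X_P$ containing $(\C^*)^2$ as an open dense subset, with complement a union of torus-invariant prime divisors $D_\rho$ indexed by the edges $\rho$ of $P$ (see \cite{CLS}). Since the constant term of $\chi(x,y)$ is nonzero and $\chi$ is monic of degree $n$ in $y$, the Newton polytope of $\chi$ is exactly $P$, so $\chi$ defines a global section of the nef line bundle $\mathcal{O}_{X_P}(D_P)$ attached to $P$, whose zero scheme is a projective curve $C_P \subset X_P$ agreeing with $\{\chi = 0\}$ on the torus. A toric Bertini-type argument (\cite[Ch.~10--11]{CLS}) shows that for $\chi$ sufficiently general within $P$, the curve $C_P$ is smooth, meets every boundary divisor $D_\rho$ transversely, and avoids the torus-fixed points of $X_P$ (where $X_P$ itself may be singular). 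Under these conditions $C_P$ is the unique smooth projective model of the function field of the affine curve $\chi(x,y) = 0$, hence is canonically identified with the Riemann surface $S$.

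Next I would compute $g(S) = g(C_P)$ by adjunction on $X_P$. Since $C_P$ lies in the divisor class of $D_P$,
\[
g(C_P) = 1 + \tfrac{1}{2}\bigl(K_{X_P} + C_P\bigr)\cdot C_P.
\]
The standard toric intersection formulas (see \cite[Prop.~10.5.6 and \S10.5]{CLS}) give $C_P^2 = 2\,\mathrm{Area}(P)$ and $-K_{X_P}\cdot C_P = \#(\partial P \cap \Z^2)$, so
\[
g(C_P) = \mathrm{Area}(P) - \tfrac{1}{2}\#(\partial P \cap \Z^2) + 1.
\]
By Pick's formula \eqref{Pick}, the right-hand side is $\#(\mathrm{int}(P) \cap \Z^2)$, which is $\gamma$ by Proposition \ref{NewtonCor}(3) and Theorem \ref{Abelgamma}.

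The main obstacle is the genericity step: verifying that $C_P$ is smooth, transverse to each $D_\rho$, and misses the torus-fixed points. This is essential because for arbitrary $\chi$ the projective closure (whether in $\PP^2$ or in $X_P$) can acquire singularities at infinity whose resolution drops the geometric genus strictly below $\gamma$; Abel's formula is combinatorial and cannot detect such degenerations. The toric Bertini theorem is precisely what encodes the ``sufficiently general within $P$'' hypothesis and pins Abel's count to the modern genus.
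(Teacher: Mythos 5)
Your proposal is correct and follows essentially the same route as the paper: a toric Bertini argument to realize $S$ as a smooth curve in $X_P$, then the adjunction formula on $X_P$ combined with Pick's formula \eqref{Pick} and Theorem \ref{Abelgamma}. The only difference is that you carry out the adjunction and intersection-number computation explicitly, whereas the paper delegates that step to \cite[Prop.\ 10.5.8]{CLS}.
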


\begin{proof}
The proof follows from results in \cite[\S10.5]{CLS}. As explained in the discussion preceding \cite[Proposition 10.5.8]{CLS}, standard Bertini Theorems imply that if $\chi(x,y)$ is sufficiently general within its Newton polygon $P$, then $\chi(x,y) = 0$ defines a smooth projective curve in $X_P$.  When this happens, the resulting smooth curve is the Riemann surface $S$ associated to $\chi(x,y) = 0$.

The same discussion in \cite{CLS} shows that when $\chi(x,y) = 0$ defines a smooth curve in $X_P$, its genus can be computed by the adjunction formula (see \cite[(10.5.1)]{CLS}).  Then the toric interpretation of the adjunction formula and Pick's formula \eqref{Pick} imply that the genus of $S$ is the number of interior lattice points of $P$ (see \cite[Prop.\ 10.5.8]{CLS} for the details).  By Theorem \ref{Abelgamma}, we conclude that the genus is given by Abel's formula for $\gamma$.
\end{proof}

\subsection{Holomorphic Differentials}
\label{HoloDiff}
Abel's $\gamma$ uses the inequalities \eqref{htmineq} that lead to the
vector space of differentials $\Omega$ in Proposition \ref{NewtonCor}.  On the other hand, the genus uses holomorphic differentials.  Theorem \ref{gammagenus} shows that $\gamma$ equals the genus in many cases but says nothing about how Abel's differentials relate to the holomorphic ones.  In fact, these differentials  are the same in the situation of the theorem. 

To see why, we use the classical theory of adjoint curves, as suggested by the reviewer and explained in \cite[pp.\ 407 and 410--412]{K1}. Recall that Abel assumes that $f_2(x)F_0(x)$ is constant, so that the differential form is
\begin{equation}
\label{AbelHoloform}
f(x,y)\,dx = \frac{f_1(x,y)}{\chi'(y)} \hskip.5pt dx.
\end{equation}
Proposition \ref{NewtonCor} can be interpreted as saying that \eqref{AbelHoloform} is one of Abel's differentials precisely when $xy f_1(x,y)$ has exponent vectors in the interior of the Newton polygon $P$ of $\chi(x,y)$.  When $\chi(x,y)$ is generic within $P$, Proposition I of Baker's paper \cite{Baker} then implies that Abel's differentials come from adjoint curves and hence are holomorphic by the theory of adjoint curves.
 
 Thus Abel's differentials are contained in the holomorphic ones.  These vector spaces have the same dimension by Theorem \ref{gammagenus}, so generically, Abel's differentials are precisely the holomorphic differentials on the Riemann surface $S$ of $\chi(x,y) = 0$.  

Once we connect holomorphic differentials to Abel's question about when $v = \sum_{i=1}^\mu \int\! f(x_i,y_i)\,dx_i$ is constant, other powerful ideas come into play, including linear equivalence of divisors, the Jacobian, and the Abel map (sometimes called the Abel-Jacobi map).  These topics are important but lie beyond the scope of these notes.  See  \cite[Sections 3 and 6]{K1} for further discussion.

We began this section with Abel's question about when his formula for $v$ reduces to a constant.  This led to some wonderful mathematics---far beyond anything Abel was thinking about in 1826---and gives another example of his amazing ability to ask the right question.   

\section{Other Versions of Abel's Theorem} 
\label{OtherAbelThms}
In this section we discuss the motivation for Abel's Paris memoir and say more about the other versions of Abel's Theorem that appear in this amazing paper.

\subsection{From Elliptic Integrals to Abel's Theorems}
Elliptic integrals have a long and rich history (see \cite{HouzelA} for a nice overview).  They enjoy many wonderful properties, including the classic addition theorem
\begin{equation}
\label{EulerAdd}
\int_0^x\!\! \frac1{\sqrt{1{-}t^4}}\,dt +\! \int_0^y \!\! \frac1{\sqrt{1{-}t^4}}\,dt  =\! \int_0^z \! \!\frac1{\sqrt{1{-}t^4}}\,dt, \ z = \frac{x\sqrt{1{-}y^4}+y\sqrt{1{-}x^4}}{1{+}x^2y^2}
\end{equation}
proved by Euler in 1753.  By Abel's time, it was known  that the sum of any number of elliptic integrals (which he called ``elliptic functions'') ``can be expressed by a single function of the same form, by adding a certain algebraic and logarithmic expression'' \cite[p.\ 444]{A3}.  When this happens, the variable in the ``single function" is an algebraic function of the variables in the elliptic summands, as in \eqref{EulerAdd}, where the variable $z$ of the ``single function'' on the right is related algebraically to the variables $x$ and $y$ of the integrals on the left.

%the variables $x$ and $y$ are related algebraically to the variable $z$ of the ``single function'' on the right-hand side of the equation.

In \eqref{EulerAdd}, also notice that the two integrals sum to a constant when we impose a single condition on $x$ and $y$ by requiring that $z$ be constant.
%when we impose a single condition on $x$ and $y$ by requiring that $z$ be constant, the two integrals sum to a constant. 
This is true for general elliptic integrals, provided we allow the sum to be an algebraic and logarithmic function.  In the introduction to his Paris memoir, Abel states this as follows:
\begin{quote}
we can express any sum of similar functions [elliptic integrals] by an algebraic and logarithmic function, provided that we establish a certain algebraic relationship between the variables of these functions. \cite[pp.\ 145--146]{A1}
\end{quote}
He goes on to explain his motivation for writing \cite{A1}:
\begin{quote}
This analogy between the properties of these functions [elliptic integrals] led the author to investigate whether or not it would be possible to find analogous properties of more general functions [the Abelian integrals \eqref{AbInt}] \cite[p.\ 146]{A1}
\end{quote}

Abel then states two versions of what we now call ``Abel's Theorem''.  For the first, he says: 
\begin{quote}
\raisebox{-4.5pt}{''}If one has several functions whose derivatives are roots of the same \raisebox{-4.5pt}{''}algebraic equation, all of whose coefficients are rational functions \raisebox{-4.5pt}{''}of the same variable, one can always express the sum of a number \raisebox{-4.5pt}{''}of similar functions by a \emph{logarithmic} and \emph{algebraic} function, pro- \raisebox{-4.5pt}{''}vided that a certain number of \emph{algebraic} relations are established \raisebox{-4.5pt}{''}between the variables of the functions in question.``{} \cite[p.~146]{A1}
\end{quote}
(The odd format using \raisebox{-4.5pt}{''} and `` is how Abel indicates a theorem in the introduction to \cite{A1}.)
The functions described at the beginning of this quote are Abelian integrals \eqref{AbInt}, so Abel is saying that the sum of any number of Abelian integrals is a logarithmic and algebraic function when a certain number of algebraic relations are satisfied.  He goes on to say:
\begin{quote}
The number of these relations does not depend on the number of summands, but only on the nature of the particular functions that one considers.  Thus, for example, for an elliptic integrand this number is 1; for an integrand that contains no irrationalities but a radical of the second degree, under which the variable has degree five or six, the number of necessary relations is 2, and so forth. 
\end{quote}
In the last sentence, Abel gives examples where the number of relations is easily seen to be the genus the curve in question.  This suggests that the genus may have a role to play in this version of Abel's Theorem, which Kleiman calls \emph{Abel's Relations Theorem} in \cite{K1}.  We will see below that the genus is indeed a key player.

Abel then states a second theorem:
\begin{quote}
\raisebox{-4.5pt}{''}One can always express the sum of a given number of functions, \raisebox{-4.5pt}{''}which are each multiplied by a rational number, and whose vari- \raisebox{-4.5pt}{''}ables are arbitrary, by a similar sum of a given number of func- \raisebox{-4.5pt}{''}tions, whose variables are algebraic functions of the variables of \raisebox{-4.5pt}{''}the given functions``{} \cite[p.~146]{A1}
\end{quote}
The ``given number" of functions in this quote is the number of relations in the previous theorem, which as we will see in  \eqref{mag} is the genus.  There is also a slight omission, for Abel forgot to say that the ``similar sum'' also involves a logarithmic and algebraic function.  This is included when he states the result later in the memoir \cite[(119) on p.\ 188]{A1}).  In \cite{K1}, Kleiman calls this \emph{Abel's Addition Theorem}. 

The key insight of  Abel's Addition Theorem is that a sum of Abelian integrals equals a logarithmic and algebraic function plus a given number, usually more than one, of Abelian integrals whose variables depend algebraically on the variables of the original integrals.  As just noted, the given number of integrals is the genus.

\subsection{Comments on Abel's Theorems}
The details of Abel's  Relations and Addition Theorems are described by Houzel \cite[Section 5]{Houzel}, and their implications for algebraic geometry in the 19th and 20th centuries are discussed by Kleiman \cite[Sections 4--7]{K1}.  Our goals here are more modest:\ we will (1) indicate how the sum 
\begin{equation}
\label{modestv}
v = \sum_{i=1}^\mu \int \! f(x_i,y_i)\,dx_i
\end{equation}
studied earlier in these notes leads to a special case of Abel's Relations Theorem and (2) say a few words about how the genus enters the picture.

In \eqref{modestv}, the $(x_i,y_i)$ are the solutions of $\chi(x,y) = \theta(x,y,\ua) = 0$ with $dx_i \ne 0$ and hence are algebraic functions of $\ua = a_1, a_2, \dots$.  Recall that the first result proved in \cite{A1} states that $v$ is a rational and logarithmic function of $\ua$. 

Following Abel \cite[p.\ 170]{A1}, let $\alpha$ denote the number of indeterminates $\ua = a_1,a_2,\dots,a_\alpha$ that appear in $\theta(x,y,\ua)$.  We will assume that $F_0(x) = 1$, which guarantees that $a_1,\dots,a_\alpha$ are algebraically independent.  Abel argues \cite[p.\ 170]{A1} that $x_1,\dots,x_\mu$ can be numbered so that $x_1,\dots,x_\alpha$ are algebraically independent.  This implies that $a_1,\dots,a_\alpha$, $x_{\alpha+1},\dots,x_\mu$, and $y_1,\dots,y_\mu$ are algebraic functions of $x_1,\dots,x_\alpha$.  Now write \eqref{modestv} in the form
\begin{equation}
\label{mualpha}
\sum_{i=1}^\alpha \int \! f(x_i,y_i)\,dx_i = v - \!\sum_{i=\alpha +1}^\mu \int \! f(x_i,y_i)\,dx_i.
\end{equation}
In this equation, $v$ is a rational and logarithmic function of $\ua$ and hence an algebraic and logarithmic function of $x_1,\dots,x_\alpha$.   There are $\mu-\alpha$ integrals on the right-hand side.  Abel assures us that this number is ``tr\'es-remarquable'' \cite[p.\ 172]{A1}.   

Abel's enthusiasm for $\mu-\alpha$ is due in part to its relation to the number $\gamma$ from \eqref{Abel62}. More precisely, when $F_0(x)$ is constant, he proves that
\[
\mu - \alpha \ge \gamma.
\]
He goes on to claim that equality can occur when $\theta(x,y,\ua)$ is chosen carefully (see \cite[pp.\ 176--177]{A1}).  
Several pages later, Abel \cite[(105) on p.\ 180]{A1} concludes that 
\begin{equation}
\label{mag}
\mu-\alpha = \gamma
\end{equation}
when all coefficients of the carefully chosen $\theta(x,y,\ua)$ are indeterminates (more on this below). By Theorem \ref{gammagenus}, $\mu-\alpha = \gamma$ is the genus in many cases.  

The full story is very complicated.  More details can be found in Sylow  \cite[Vol.\ 2, pp.\ 296--300]{A5}, Brill and Noether \cite[pp.\ 215--222]{BN}, and Houzel \cite[pp.\ 92--96]{Houzel}.  Kleiman's article \cite{K1} discusses these issues from the point of view of  both classical and modern algebraic geometry, while Edwards \cite[Chapter~9]{Edwards}, drawing on work of Dedekind and Weber \cite{DW}, takes a more constructive and algebraic point of view.  In particular, Essay 9.9 of \cite{Edwards} shows that $\mu-\alpha$ equals the genus for an explicitly described choice of $\theta(x,y,\ua)$.

For us, a link between $\mu - \alpha$ and the genus $g$ of the Riemann surface $S$ of $\chi(x,y) = 0$ can be described as follows (omitting many details).  Let $D$ be a divisor of degree $\mu \ge 2g-1$ on $S$.  As usual, $L(D)$ is the vector space of rational functions $\varphi(x,y)$ on $S$ such that $\mathrm{div}(\varphi(x,y))+D \ge 0$, and the corresponding complete linear system of divisors is the projective space $|D|$ of $L(D)$.  Set $\ell(D) = \dim L(D) = 1 + \alpha$, so that $\alpha = \dim |D|$.  Fix a basis $\varphi_0(x,y),\dots,\varphi_\alpha(x,y)$ of $L(D)$ and define
\[
\theta(x,y,\ua) = a_0\varphi_0(x,y)  + \cdots + a_\alpha \varphi_\alpha(x,y).
\]
For a generic choice of $\ua$, the solutions $(x_i,y_i)$ of $\chi(x,y) = \theta(x,y,\ua) = 0$ give an effective divisor of degree $\mu$ on $S$ in the complete linear system $|D|$ of dimension $\alpha$.   This is why the number of indeterminates in $\theta(x,y,\ua)$ is written $1+\alpha$, not $\alpha$.

By the Riemann-Roch Theorem (see, for example, \cite[\S8.6]{Fulton}), we have
\[
\deg(D) - g + 1 = \ell(D) - \ell(K-D),
\]
where $K$ is a canonical divisor on $S$.  But $\deg(D) = \mu$ and $\ell(D) = 1+ \alpha$, and $\mu \ge 2g-1$ guarantees that $\ell(K-D) = 0$.  Hence
\[
\mu - g + 1 = (1 + \alpha) - 0, \ \ \text{so that} \ \ \mu - \alpha = g.
\]
This argument gives one way of seeing why Abel needed to make a careful choice of $\theta(x,y,\ua)$.  It also justisfies his claim that $\mu-\alpha$ is ``very remarkable''. 

When $\mu-\alpha$ equals the genus, we get a version of Abel's Relations Theorem as follows.  In \eqref{mualpha}, impose $\mu-\alpha$ conditions by requiring that $x_{\alpha+1},\dots,x_\mu$ be constant.   Since $x_{\alpha+1},\dots,x_\mu$ are algebraic functions of  $x_1,\dots,x_\alpha$, this imposes $\mu-\alpha$ algebraic relations on $x_1,\dots,x_\alpha$, and when these conditions are satisfied, $dx_{\alpha+1} = \cdots = dx_\mu = 0$.  Then \eqref{mualpha} simplifies to
\[
\int \! f(x_1,y_1)\,dx_1 + \cdots + \int \! f(x_\alpha,y_\alpha)\,dx_\alpha = v,
\]
where $v$ is an algebraic and logarithmic function of $x_1,\dots,x_\alpha$.  The number of  conditions is $\mu-\alpha$, which is the genus in this situation.  Hence we have  a special case of Abel's Relations Theorem!

\appendix
\section{How Abel Proved His Formulas for \emph{dv} and \emph{v}}
\label{AbelPf}

The proofs of Abel's formulas for $dv$ and $v$ given in Sections \ref{Abeldv} and \ref{Abelv} differ from Abel's, mainly because he used partial fractions instead of residues.  To give the reader a sense of why partial fractions are relevant, Section~\ref{SecRational} will explain what Abel's formulas look like in the special case of rational functions.  Then Section~\ref{SecRoweAbel} will give his proofs of modern versions of the formulas for $dv$ and $v$, and Section~\ref{rewrite} will show how to transform Abel's original formula for $dv$ into the modern version.  Finally, Section~\ref{BooleRowe} will discuss how Boole and Rowe used residues to simplify Abel's proofs.

\subsection{The Case of Rational Functions}
\label{SecRational}

The integrals $\int \! f(x,y)\,dx$ in Abel's memoir can be complicated because $y$ is usually an algebraic function of $x$.  When $y$ is not present, we have the known case of an integral of a rational function.  Abel's formulas still apply and reveal an unexpected connection to partial fractions.  

We begin by applying  Abel's formula for $dv$ to a rational function.

\begin{theorem}
\label{Abel3A}
For a rational function $\varphi(x) \in \K(x)$ and a new variable $a$, we have
\begin{equation}
\label{Abel31A}
\varphi(a) =  \sum_{j=1}^\alpha \res_{x=\beta_j}\Big(\frac{\varphi(x)}{a-x}\Big) -\text{\large$\varPi$}\Big(\frac{\varphi(x)}{a-x}\Big),
\end{equation}
where $\beta_1,\dots,\beta_\alpha$ are the poles of $\varphi(x)$ in $\overline{\K}$.  
\end{theorem}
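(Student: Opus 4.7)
The plan is to derive the formula as a direct specialization of Theorem \ref{AThmdv} with $y$ playing no role. I would take $\chi(x,y) = y$, so that $n=1$, the unique root is $y^{(1)} = 0$, and $\chi'(y) = 1$; and take $\theta(x,y,a) = a - x$, using the single indeterminate $a$ in place of $\ua = a_1,a_2,\dots$. Writing $\varphi(x) = f_1(x)/f_2(x)$ in lowest terms with $f_1,f_2 \in \K[x]$, the rational function $f(x,y) = \varphi(x) = f_1(x)/(f_2(x)\chi'(y))$ has precisely the form required in \eqref{fxyform}.

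Next I would read off the ingredients of Theorem \ref{AThmdv}. The resultant is $r(x) = \theta(x,y^{(1)},a) = a-x$; since $a$ is transcendental over $\K$, this root is not algebraic, hence $F_0(x) = 1$ and $F(x,a) = a-x$. In particular $F'(x) = -1$ and hypothesis \eqref{Assume1} holds. The unique solution of $\chi(x,y) = \theta(x,y,a) = 0$ is $(x_1,y_1) = (a,0)$, giving $dx_1 = da$ and
\[
dv = f(x_1,y_1)\,dx_1 = \varphi(a)\,da.
\]
The sum $S(x)$ of \eqref{Sxdef} collapses to a single term,
\[
S(x) = f(x,0)\,\frac{\delta\theta(0)}{\theta(0)} = \frac{\varphi(x)}{a-x}\,da,
\]
and the roots of $f_2(x)F_0(x) = f_2(x)$ are exactly the poles $\beta_1,\dots,\beta_\alpha$ of $\varphi$. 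Theorem \ref{AThmdv} then gives
\[
\varphi(a)\,da = \sum_{j=1}^\alpha \res_{\beta_j}\!\Big(\tfrac{\varphi(x)}{a-x}\Big)\,da \; - \; \text{\large$\varPi$}\Big(\tfrac{\varphi(x)}{a-x}\Big)\,da,
\]
since the $da$ factor is a scalar with respect to both the residue operator (taken in $x$) and the operator $\text{\large$\varPi$}$. Dividing through by $da$ yields \eqref{Abel31A}.

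There is no real obstacle, since the verification reduces to checking that the minimal setup ($n=1$, one indeterminate, $\chi=y$) satisfies the hypotheses of Theorem \ref{AThmdv}; the only point requiring a moment's thought is that $a$ must be treated as transcendental over $\K$, which is why $F_0(x) = 1$ rather than $a-x$ being absorbed into $F_0$. As a self-contained sanity check, the formula also follows in one line from the Global Residue Theorem \eqref{GRT} applied to the differential form $\tfrac{\varphi(x)}{a-x}\,dx$, whose only poles are $\beta_1,\dots,\beta_\alpha$, $x=a$, and $\infty$: using \eqref{computeres1} the residue at $x=a$ is $-\varphi(a)$, and by \eqref{PiRes} the residue at $\infty$ equals $-\text{\large$\varPi$}(\varphi(x)/(a-x))$. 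This identifies the formula for what it is, namely a restatement of the classical partial fraction decomposition of a rational function of one variable, which is exactly the partial-fractions viewpoint Section \ref{SecRational} promises.
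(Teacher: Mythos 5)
Your proposal is correct and follows essentially the same route as the paper: both specialize Theorem \ref{AThmdv} to a degenerate one-root situation, obtain $S(x)=\varphi(x)\,da/(a-x)$ with $F_0(x)=1$ and the $\beta_j$ the roots of $f_2(x)$, and cancel $da$. The only difference is the cosmetic choice of data---you take $\chi(x,y)=y$, $\theta=a-x$ while the paper takes $\chi(x,y)=y-x$, $\theta=y-a$---which yields the identical $S(x)$ and conclusion.
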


\begin{proof}
We apply Theorem \ref{AThmdv} to $\chi(x,y) = y-x$ and $\theta(x,y,a) = y-a$, so $\ua = a$.  The root of $\chi(x,y)$ is $y^{(1)} = x$, and the solution of $\chi(x,y) = \theta(x,y,a) = 0$ is $(x_1,y_1) = (a,a)$.  Thus 
\[
dv = \varphi(x_1)\, dx_1 = \varphi(a) \,da.  
\]
We also have $F(x) = x-a$ and $F_0(x) = 1$.   Following Abel \cite[(201) on p.\ 203]{A1}, set 
\[
\varphi(x) = \frac{f(x)}{f_2(x)},
\] 
so that the poles of $\varphi(x)$ are the roots of $f_2(x)$.  Note that $\chi'(y) = 1$ and $\delta = \frac{d}{da}\,da$.  

The definition of $S(x)$ given in \eqref{Sxdef} reduces to
\begin{equation}
\label{Sxrat}
S(x) = \varphi(x) \hskip1pt\frac{ \delta\theta(x,y^{(1)})}{\theta(x,y^{(1)})} = \varphi(x) \hskip1pt \frac{ \frac{d}{da}(x-a)\, da}{x-a} = \varphi(x)\frac{-\hskip1pt da}{x-a} = \frac{\varphi(x) \hskip1pt da}{a-x}.
\end{equation}
Theorem \ref{AThmdv} implies that if $\beta_1,\dots,\beta_\alpha$ are the roots of $f_2(x)$, then 
\[
 \varphi(a)\,da  =  \sum_{j=1}^\alpha \res_{\beta_j}\hskip-1pt(S(x)) - \text{\large$\varPi$}(S(x)) = \sum_{j=1}^\alpha \res_{\beta_j}\hskip-1pt\Big( \frac{\varphi(x) \hskip1pt da}{a-x}\Big) - \text{\large$\varPi$}\Big( \frac{\varphi(x) \hskip1pt da}{a-x}\Big). 
\]
Canceling $da$, we get \eqref{Abel31A}. 
\end{proof}

Now comes a surprise:\ \eqref{Abel31A} is an elegant way of writing the partial fraction decomposition of $\varphi(a)$.  Here is the precise result.

\begin{proposition}
\label{AbelPF1}
Let
\[
\varphi(x) =  \sum_{j=1}^\alpha \sum_{k = 1}^{\nu_j} \frac{A_j^{(k)}}{(x-\beta_j)^k} + G(x) 
\]
be the partial fraction decomposition of $\varphi(x)$, where the $A_j^{(k)}$ are constants, $G(x)$ is a polynomial, and $\nu_j$ is the order of $\beta_j$ as a pole of $\varphi(x)$.  Then
\begin{align}
\res_{x=\beta_j}\Big(\frac{\varphi(x)}{a-x}\Big) &= \sum_{k = 1}^{\nu_j} \frac{A_j^{(k)}}{(a-\beta_j)^k} \label{Abel32}\\
\text{\large$\varPi$}\Big(\frac{\varphi(x)}{a-x}\Big) &= -G(a). \label{Abel33}
\end{align}
Thus \eqref{Abel31A} computes the partial fraction decomposition of $\varphi(a)$.
\end{proposition}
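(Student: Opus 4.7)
The plan is to verify \eqref{Abel32} and \eqref{Abel33} by substituting the given partial fraction decomposition of $\varphi(x)$ into $\varphi(x)/(a-x)$ and computing the residue and $\varPi$ term by term via elementary series expansions. Once both equalities are established, plugging them into \eqref{Abel31A} from Theorem~\ref{Abel3A} instantly recovers the partial fraction decomposition of $\varphi(a)$, proving the last sentence.

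For \eqref{Abel32}, first I would observe that among the summands of the partial fraction decomposition of $\varphi(x)$, only the terms $A_j^{(k)}/(x-\beta_j)^k$ with $1\le k\le \nu_j$ have a pole at $\beta_j$; the remaining partial fraction terms and $G(x)$ are holomorphic at $\beta_j$, and $1/(a-x)$ is also holomorphic there since $a$ is a fresh variable. So only the principal part at $\beta_j$ contributes. For each such term I would Taylor-expand
\[
\frac{1}{a-x} = \frac{1}{(a-\beta_j) - (x-\beta_j)} = \sum_{m=0}^{\infty} \frac{(x-\beta_j)^m}{(a-\beta_j)^{m+1}}
\]
and read off the coefficient of $(x-\beta_j)^{-1}$ in $A_j^{(k)}/\bigl((x-\beta_j)^k(a-x)\bigr)$ as $A_j^{(k)}/(a-\beta_j)^k$. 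Summing over $k$ yields \eqref{Abel32}.

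For \eqref{Abel33}, I would split the computation into the principal parts and the polynomial part $G(x)$. Each principal-part term $A_j^{(k)}/\bigl((x-\beta_j)^k(a-x)\bigr)$ decays like $O(1/x^{k+1})$ at infinity (with $k\ge1$), so its expansion in $1/x$ has no $1/x$ coefficient and contributes $0$ to $\varPi$. For the polynomial part, the key trick is polynomial division: write $G(x) = (x-a)Q(x) + G(a)$, so that
\[
\frac{G(x)}{a-x} = -Q(x) + \frac{G(a)}{a-x}.
\]
Since $-Q(x)$ is a polynomial in $x$, it contributes $0$ to $\varPi$. For the remaining piece, expand $1/(a-x) = -\sum_{m=0}^\infty a^m/x^{m+1}$ in descending powers of $x$; the coefficient of $1/x$ is $-1$, so $\varPi\bigl(G(a)/(a-x)\bigr) = -G(a)$. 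This proves \eqref{Abel33}.

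Combining \eqref{Abel32} and \eqref{Abel33} with \eqref{Abel31A} gives
\[
\varphi(a) \;=\; \sum_{j=1}^\alpha \sum_{k=1}^{\nu_j} \frac{A_j^{(k)}}{(a-\beta_j)^k} \;-\; \bigl(-G(a)\bigr) \;=\; \sum_{j=1}^\alpha \sum_{k=1}^{\nu_j} \frac{A_j^{(k)}}{(a-\beta_j)^k} + G(a),
\]
which is the partial fraction decomposition of $\varphi(a)$. There is no real obstacle here; this is entirely a direct verification, and the only mildly clever step is the polynomial identity $G(x) = (x-a)Q(x) + G(a)$, which transfers the computation of $\varPi$ onto the simple kernel $1/(a-x)$.
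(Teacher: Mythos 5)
Your proof is correct and takes essentially the same approach as the paper: both verify \eqref{Abel32} by expanding $1/(a-x)$ as a geometric series in $x-\beta_j$ and reading off the coefficient of $1/(x-\beta_j)$, and both verify \eqref{Abel33} from the expansion $1/(a-x) = -\sum_{m\ge0} a^m/x^{m+1}$ in descending powers of $x$. The only difference is cosmetic: the paper multiplies that series against the full expansion of $\varphi(x)$ at infinity and collects the $1/x$ coefficient directly, whereas you first discard the principal parts (which decay like $x^{-(k+1)}$ with $k\ge1$) and use the division $G(x)=(x-a)Q(x)+G(a)$ --- a slightly tidier bookkeeping of the same computation.
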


\begin{remark}
\label{RationalPFremark}
The formulas \eqref{Abel32} and \eqref{Abel33} linking residues and partial fractions are well known (see, for example,  \cite[1.2.4]{CD}).  What is new here is the recognition of their relevance to Abel's formulas in the special case considered in  Theorem \ref{Abel3A}.   
\end{remark}

\begin{proof}
We first compute $\res_{x =\beta_j}\big(\frac{\varphi(x)}{a-x}\big)$.  In the partial fraction decomposition of $\varphi(x)$, isolate the $j$th summand  and expand everything else in terms of $x-\beta_j$ to obtain
\begin{equation}
\label{varphiexpbj}
\varphi(x) = \sum_{k = 1}^{\nu_j} \frac{A_j^{(k)}}{(x-\beta_j)^k} + b_0 + b_1(x-\beta_j) + \cdots.
\end{equation}
We also have the expansion
\[
\frac1{a-x} = \frac1{(a-\beta_j)\big(1 - \frac{x-\beta_j}{a-\beta_j}\big)} = \sum_{n=0}^\infty \frac{(x-\beta_j)^n}{(a-\beta_j)^{n+1}}.
\]
Some easy algebra implies that $\frac{\varphi(x)}{a-x}$ has the expansion
\[
\frac{\varphi(x)}{a-x} =  \cdots + \frac1{x-\beta_j}\Big( \frac{A^{(1)}_j}{a-\beta_j} +  \frac{A^{(2)}_j}{(a-\beta_j)^2}  + \cdots  +  \frac{A^{(\nu_j)}_j}{(a-\beta_j)^{\nu_j}}  \Big) + \cdots.
\]
Since $\res_{x=\beta_j}\big(\frac{\varphi(x)}{a-x}\big)$ is the coefficient of $\frac1{x-\beta_j}$, \eqref{Abel32} follows immediately.

For $\text{\large$\varPi$}\big(\frac{\varphi(x)}{z-x}\big)$, expand $\varphi(x)$ in terms of descending powers of $\frac1x$ to obtain
\begin{equation}
\label{varphiexpinfty}
\varphi(x) = \underbrace{c_m x^m + c_{m-1} x^{m-1} + \cdots + c_0}_{\displaystyle G(x)} + \frac{c_{-1}}{x} +  \frac{c_{-2}}{x^2} + \cdots.
\end{equation}
Note also that
\[
\frac1{a-x} = \frac{-1}x \frac1{1-\frac{a}{x}} = - \sum_{n=0}^\infty \frac{a^n}{x^{n+1}}.
\]
Then multiplying out $\frac{\varphi(x)}{a-x}$ gives the expansion
\[
\frac{\varphi(x)}{a-x} = \cdots + \frac1x \big(- c_m a^{m} - \cdots - c_1 a - c_0\big) + \cdots.
\]
Since $\text{\large$\varPi$}\big(\frac{\varphi(x)}{a-x}\big)$ is the coefficient of $\frac1x$, \eqref{Abel33} follows easily.
\end{proof}

We can now compute $\int\!\varphi(a)\,da$ using Theorem \ref{justifyint}. 

\begin{theorem}
\label{AbelIntRatThm}
\begin{equation}
\label{Abel31B}
\int \! \varphi(a)\,da = \sum_{j=1}^\alpha \res_{x=\beta_j}(\varphi(x)\log_{\beta_j}(a-x) - \text{\large$\varPi$}(\varphi(x)\log_\infty(a-x)) + C,
\end{equation}
where $\log_{\beta_j}(a-x)$ and $\log_{\infty}(a-x)$ are from Definition \ref{logxi}.  
\end{theorem}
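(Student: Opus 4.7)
The plan is to apply Theorem~\ref{justifyint} to the specialization $\chi(x,y) = y - x$ and $\theta(x,y,a) = a - y$. This is a mild variant of the setup used for Theorem~\ref{Abel3A} (where $\theta = y - a$); flipping the sign is what makes the logs $\log_{\beta_j}(a-x)$ and $\log_\infty(a-x)$ appear on the right-hand side of \eqref{Abel31B} directly, with no constants to reconcile. With this choice, $\ua = a$, the only root of $\chi$ in $y$ is $y^{(1)} = x$, the unique solution of $\chi = \theta = 0$ is $(x_1, y_1) = (a, a)$, and the resultant is $r(x) = \theta(x, x, a) = a - x$. Factoring $r = F_0 F$ with $F_0(x) = 1$ and $F(x) = a - x$, assumption~\eqref{Assume1} holds since $F'(x) = -1 \ne 0$. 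Writing $\varphi(x) = f_1(x)/f_2(x)$ as in the proof of Theorem~\ref{Abel3A}, the representation $f(x,y) = f_1(x,y)/(f_2(x)\chi'(y))$ holds with $\chi'(y) = 1$, and the $\beta_1,\dots,\beta_\alpha$ of Theorem~\ref{AbelIntRatThm} are exactly the roots of $f_2(x)F_0(x) = f_2(x)$.

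Since there is only one root $y^{(1)} = x$, the Puiseux series $\widehat{S}_\beta$ and $\widehat{S}_\infty$ from Section~\ref{AvPf} collapse to a single summand each:
\[
\widehat{S}_{\beta_j}(x) = \varphi(x)\log_{\beta_j}(a - x), \qquad \widehat{S}_\infty(x) = \varphi(x)\log_\infty(a - x),
\]
with the logs given by Definition~\ref{logxi} applied to the polynomial $a - x$. Because $(x_1, y_1) = (a, a)$ and $dx_1 = da$, the sum $v = \sum_{i=1}^\mu \int\! f(x_i, y_i)\,dx_i$ reduces to $\int \varphi(a)\,da$. Substituting these identifications into Theorem~\ref{justifyint} delivers \eqref{Abel31B} at once.

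I do not expect a serious obstacle. The only genuine check is that the elementary Puiseux factorizations $a - x = (a - \beta_j)(1 - (x-\beta_j)/(a-\beta_j))$ near $x = \beta_j$ and $a - x = -x(1 - a/x)$ near $x = \infty$ produce exactly the leading coefficients and $W$-series prescribed by Definition~\ref{logxi}, so that the $\log_{\beta_j}(a - x)$ and $\log_\infty(a - x)$ appearing in the statement of Theorem~\ref{AbelIntRatThm} coincide with the Puiseux logs supplied by the general machinery. The point worth flagging is the choice $\theta = a - y$ in place of the more symmetric $\theta = y - a$: both give valid specializations of Theorem~\ref{justifyint}, but the former delivers the formula in its stated form without needing to reconcile sign conventions or absorb a residual $\log(-1)$ into the constant of integration~$C$.
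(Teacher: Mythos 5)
Your proposal is correct and takes essentially the same route as the paper: the paper also obtains \eqref{Abel31B} by specializing Theorem \ref{justifyint} to the rational-function setup, keeping $\theta = y-a$ from Theorem \ref{Abel3A} and using $S(x) = \varphi(x)\,\delta(a-x)/(a-x)$ to identify $\widehat{S}_{\beta_j}(x) = \varphi(x)\log_{\beta_j}(a-x)$ and $\widehat{S}_\infty(x) = \varphi(x)\log_\infty(a-x)$. Your choice $\theta = a-y$ is a harmless cosmetic variant that produces the same $\widehat{S}$'s directly, avoiding the $\log(-1)$ (a $\delta$-constant absorbed into $C$) that the sign convention would otherwise introduce.
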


\begin{proof}
By \eqref{Sxrat}, we obtain
\[
S(x) =  \frac{\varphi(x)  \hskip1pt da}{a-x}  =   \varphi(x) \hskip1pt \frac{ \delta(a-x)}{a-x}, 
\]
which implies 
\[
\widehat{S}_{\beta_j}(x) = \varphi(x) \log_{\beta_j}(a-x) \quad\text{and}\quad \widehat{S}_\infty(x) = \varphi(x) \log_\infty(a-x)
\]
by the formulas given in Section \ref{AvPf}.  Then \eqref{Abel31B} follows from Theorem \ref{justifyint}.
\end{proof}

\begin{remark}
\label{AbelRat}
Abel's version of  \eqref{Abel31B} is equation (201) on  \cite[p.\ 203]{A1}, which states
\begin{equation}
\label{Abel201}
\int\! \frac{fx_1\hskip.5pt.\hskip.5pt dx_1}{f_2 x_1} = C - \text{\large$\varPi$} \frac{fx}{f_2 x } \log(x-x_1) + \sum \nu \frac{d^{\nu-1}}{d\beta^{\nu-1}}\bigg(\frac{f\beta}{f_2^{(\nu)}\beta}\log(\beta-x_1)\bigg).
\end{equation}
Similar to  \eqref{Abel36} and \eqref{Abel37}, Abel's formula is not quite correct, as will be explained in Sections \ref{SecRoweAbel} and \ref{rewrite}.
After stating \eqref{Abel201}, Abel notes that his formula ``gives, as one can see, the integral of every rational differential.'' 
\end{remark}

Not surprisingly, \eqref{Abel31B} computes the integral of partial fraction decomposition of $\varphi(a)$.  
To see why, we begin with $\log_{\beta_j}(a-x)$.  Since $a-x = (a-\beta_j)\big(1 - \frac{x-\beta_j}{a-\beta_j}\big)$, Definition \ref{logxi} gives
\[
\log_{\beta_j}\hskip-1pt(a-x) = \log(a-\beta_j) - \sum_{k=1}^\infty \frac{(x-\beta_j)^k}{k(a-\beta_j)^k}.
\]
Combining this with the expansion of $\varphi(x)$ given in \eqref{varphiexpbj}, one computes without difficulty that
\[
\res_{x=\beta_j}(\varphi(x)\log_{\beta_j}\hskip-1pt(a-x)) = A^{(1)}_j \log(a-\beta_j) - \frac{A^{(2)}_j}{a-\beta_j}   - \cdots  -  \frac{A^{(\nu_j)}_j}{(\nu_j-1)(a-\beta_j)^{\nu_j-1}},
\]
which is easily seen to be an antiderivative of
\[
\sum_{k = 1}^{\nu_j} \frac{A_j^{(k)}}{(a-\beta_j)^k} = \frac{A^{(1)}_j }{a-\beta_j} + \frac{A^{(2)}_j}{(a-\beta_j)^2}   + \cdots  +  \frac{A^{(\nu_j)}_j}{(a-\beta_j)^{\nu_j}}.
\]
Similarly, $a-x = -x\big(1-\frac{a}{x}\big)$ and Definition \ref{logxi} imply that 
\[
\log_\infty(a-x) =  \log(-1) - \frac{a}{x} -  \frac12\Big(\frac{a}{x}\Big)^{\!2} - \cdots.
\]
Combining this with \eqref{varphiexpinfty} leads to
\[
\text{\large$\varPi$}(\varphi(x)\log_\infty(a-x)) = -\frac{c_m a^{m+1}}{m+1} - \cdots - \frac{c_1 a^2}{2} -c_0 a + c_{-1}\log(-1),
\]
which is clearly an antiderivative of $-G(a)$. 

\begin{remark}
We can summarize our discussion of
\[
\int \! \varphi(a)\,da = \sum_{j=1}^\alpha \res_{x=\beta_j}\big(\varphi(x)\log_{\beta_j}(a-x)\big) - \text{\large$\varPi$}\big(\varphi(x)\log_\infty(a-x)\big) + C
\]
by saying that Abel's formulas yield interesting mathematics even in the simplest cases.   When we speak of his formulas as \emph{algorithms embedded in formulas}, we see that in the special case of a rational function $\varphi(x)$, the algorithm is the standard partial fraction method for computing $\int\! \varphi(a)\,da$.
\end{remark}

\subsection{Abel's Proof for \emph{dv} and \emph{v}}
\label{SecRoweAbel}

To get a sense of what Abel did in \cite{A1}, consider Figure \ref{foureqn}, which shows equations (34), (35), (36), and (37) from \cite[pp.\ 158--159]{A1}.

\begin{figure}[H]
\[
\framebox{
\begin{picture}(325,187)
\put(0,170){(34)}
\put(30,170){$\displaystyle dv =  -\text{\large$\varPi$} \frac{R_1x}{\theta_1x . Fx} 
+ \sum{}\rule{0pt}{12pt}'\nu \frac{d^{\nu-1}}{dx^{\nu-1}} \bigg\{\frac{R_1x}{\theta_1^{(\nu)}x . Fx}\bigg\}$}
\put(70,148){$\displaystyle (x = \beta_1 \dots \beta_\alpha)$}
\put(0,124){(35)}
\put(30,124){$\displaystyle F_0(x) . (x - \beta_1)^{-k_1} (x-\beta_2)^{-k_2} \dots  (x-\beta_\alpha)^{-k_\alpha}$}
\put(78,107){$\displaystyle =(x - \beta_1)^{\mu_1-k_1} (x-\beta_2)^{\mu_2-k_2} \dots  (x-\beta_\alpha)^{\mu_\alpha-k_1\alpha} = F_2x{:}$}
\put(84,85){$\displaystyle R_1x = F_2x . Fx. \sum \frac{f_1(x,y)}{\chi'y} \frac{\delta \theta y}{\theta y}$}
\put(0,53){(36)}
\put(30,53){\small$\displaystyle dv = {-}\text{\large$\varPi$} \frac{F_2x}{\theta_1x} \sum \frac{f_1(x,y)}{\chi'y} \frac{\delta \theta y}{\theta y}
+ \sum{}\rule{0pt}{12pt}'\nu \frac{d^{\nu-1}}{dx^{\nu-1}} \bigg\{\frac{F_2x}{\theta_1^{(\nu)}x} \sum \frac{f_1(x,y)}{\chi'y} \frac{\delta \theta y}{\theta y}\bigg\}$}
\put(0,15){(37)}
\put(30,15){\small $\displaystyle v\hskip-.5pt = \hskip-.5pt C{-}\text{\large$\varPi$} \frac{F_2x}{\theta_1x} \sum \frac{f_1(x,y)}{\chi'y}  \log \theta y
{+} \sum{}\rule{0pt}{12pt}'\hskip-.5pt \nu \frac{d^{\nu-1}}{dx^{\nu-1}} \bigg\{\hskip-.5pt \frac{F_2x}{\theta_1^{(\nu)}x}\hskip-.5pt  \sum \hskip-.5pt \frac{f_1(x,y)}{\chi'y}  \log \theta y\hskip-.5pt \bigg\}$}
\end{picture}}
\]
\caption{Four Equations From Abel's Memoir}
\label{foureqn}
\end{figure}

We want to understand how Abel proved (36) and (37).   After stating (36), Abel says the following:
\begin{quote}
In this form the value $dv$ is immediately integrable, since $F_2x$, $\theta_1 x$, $f_1(x,y)$ and $\chi'y$ are all independent of of the quantities $a, a', a'' \dots$, to which the differentiation relates.  \cite[p.\ 158]{A1}
\end{quote}
This leads Abel to (37) since
\[
\delta \log \theta y = \frac{\delta \theta y}{\theta y}.
\]
Thus (37) is an immediate consequence of (36).   However, Abel also notes that substituting $R_1x$ from (35) into the expression for  $dv$ given in (34) immediately gives (36). So (36) follows from (34), i.e., (34) is the heart of the matter.

In this section, we will recast (34) in modern form as Theorem \ref{AThm} and prove the theorem using only arguments taken from \cite{A1}.  Then, in Section \ref{rewrite}, we will explain the notation in Figure \ref{foureqn} and show how 
 (34) and (36) transform into the modern versions proved in Theorems \ref{AThm} and \ref{AThmdv} respectively.

Before stating Theorem \ref{AThm}, recall from \eqref{fxyform} that $f(x,y)$ is written in the form
 \[
 f(x,y) = \frac{f_1(x,y)}{f_2(x) \chi'(y)}.
 \]
We also have the resultant $r(x)  = F_0(x) F(x)$ from \eqref{Efact}.  The roots of $f_2(x)F_0(x)$ are $\beta_1,\dots,\beta_\alpha$ with multiplicities $\nu_1,\dots,\nu_\alpha$.  We set $\theta_1(x) = f_2(x)F_0(x)$.  This agrees with a Abel's notation in a common special case.\footnote{\label{f6} More precisely, on p.\ 153 of \cite{A1}, Abel introduces exponents $k_1,\dots,k_\alpha$, and on p.\ 160, he notes that one can always assume $k_1=\cdots =k_\alpha=0$.  We will assume this, which implies that $\nu_i$ and $\theta_1(x)$ have the same meaning for both us and Abel.  See Section \ref{rewrite} for more details.} 

Our treatment of Abel's formula for $dv$ used the differential form
\[
S(x) = \sum_{\ell=1}^n f(x,y^{(\ell)}) \frac{\delta \theta(y^{(\ell)})}{\theta(y^{(\ell)})}.
\]
Using Lemma \ref{AbelLem} and $\theta_1(x) = f_2(x)F_0(x)$, we have
\begin{equation}
\label{Sxtheta1}
S(x) = \frac{R(x)}{f_2(x)F_0(x)F(x)} =  \frac{R(x)}{\theta_1(x)F(x)},
\end{equation}
which easily implies that 
\begin{equation}
\label{n81}
R(x) = F_0(x) F(x) \sum_{\ell=1}^n \frac{f_1(x,y^{(\ell)})}{\chi'(y^{(\ell)})} \frac{\delta \theta(y^{(\ell)})}{\theta(y^{(\ell)})}. 
\end{equation}
For Abel, this is \cite[(19) on p.\ 153]{A1} since $r(x) = F_0(x)F(x)$.  The proof of Lemma~\ref{AbelLem} given in Section~\ref{APf} is based on his arguments.

We are now ready to state and prove the modern version of (34).

\begin{theorem}
\label{AThm}
Assume \eqref{Assume1} and let $\beta_j$ and $\nu_j$ be as above.  Then
\[
\sum_{i=1}^\mu f(x_i,y_i)\, dx_i = \sum_{j=1}^\alpha \frac1{(\nu_j-1)!}  \bigg( \frac{d^{\nu_j-1}}{dx^{\nu_j-1}}\frac{(x- \beta_j)^{\nu_j} R(x)}{\theta_1(x)F(x)} \bigg)\Big|_{x=\beta_j} \!\! - \text{\large$\varPi$} \frac{R(x)}{\theta_1(x) F(x)}. 
\]
\end{theorem}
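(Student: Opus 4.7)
The plan is to deduce this from Theorem \ref{AThmdv} together with the identification of $S(x)$ given in Lemma \ref{AbelLem} and the elementary residue formula \eqref{computeres}. Theorem \ref{AThmdv} already provides
\[
\sum_{i=1}^\mu f(x_i,y_i)\,dx_i = \sum_{j=1}^\alpha \res_{\beta_j}(S(x)) - \text{\large$\varPi$}(S(x)),
\]
so the task reduces to rewriting each residue at $\beta_j$ in the form stated in the theorem and noting that the $\text{\large$\varPi$}$ term already matches, since by \eqref{Sxtheta1} we have $S(x) = R(x)/(\theta_1(x)F(x))$ with $\theta_1(x) = f_2(x)F_0(x)$.

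First I would substitute the expression $S(x) = R(x)/(\theta_1(x)F(x))$ into each summand $\res_{\beta_j}(S(x))$. To apply the residue formula \eqref{computeres} with exponent $\nu_j$, I need to verify that $\nu_j$ is at least the order of $\beta_j$ as a pole of $S(x)$. This is the one real checkpoint in the argument: the $\beta_j$ are roots of $\theta_1(x) = f_2(x)F_0(x)$, which lie in $\overline{\K}$, whereas the roots $x_1,\dots,x_\mu$ of $F(x)$ are transcendental over $\K$ (as already observed in the proof of Theorem \ref{AThmdv}). Hence $\theta_1(x)$ and $F(x)$ share no common root, so $F(\beta_j)\neq 0$, and therefore the pole order of $S(x)$ at $\beta_j$ is at most the multiplicity $\nu_j$ of $\beta_j$ in $\theta_1(x)$. (Cancellation from $R(x)$ may make the actual pole order strictly smaller, but \eqref{computeres} remains valid for any $\nu\ge$ the pole order.)

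Once this bound is in hand, \eqref{computeres} applied with $g(x) = S(x) = R(x)/(\theta_1(x)F(x))$ and exponent $\nu_j$ yields
\[
\res_{\beta_j}(S(x)) = \frac{1}{(\nu_j-1)!}\,\frac{d^{\nu_j-1}}{dx^{\nu_j-1}}\!\left(\frac{(x-\beta_j)^{\nu_j} R(x)}{\theta_1(x)F(x)}\right)\bigg|_{x=\beta_j}.
\]
Summing this over $j = 1,\dots,\alpha$ and combining with the unchanged $-\text{\large$\varPi$}(R(x)/(\theta_1(x)F(x)))$ term gives precisely the formula in the theorem statement. The main obstacle is entirely bookkeeping: confirming that the $\beta_j$-factors of $\theta_1(x)$ contribute exactly $\nu_j$ to the pole order bound and that no other contributions come from $F(x)$, both of which follow from the algebraic/transcendental dichotomy already exploited in Section \ref{APf}.
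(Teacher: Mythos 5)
Your argument is correct: Theorem \ref{AThmdv} gives $dv = \sum_{j=1}^\alpha \res_{\beta_j}(S(x)) - \text{\large$\varPi$}(S(x))$, Lemma \ref{AbelLem} identifies $S(x) = R(x)/(\theta_1(x)F(x))$, and since the $\beta_j$ are algebraic over $\K$ while the roots of $F(x)$ are transcendental, $F(\beta_j)\neq 0$, so $\nu_j$ bounds the pole order at $\beta_j$ and \eqref{computeres} converts each residue into the stated derivative expression (you rightly note that cancellation with $R(x)$ is harmless, which is exactly the point of Remark \ref{nui}). However, this is a genuinely different route from the paper's own proof of Theorem \ref{AThm}. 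The whole purpose of the appendix is to reconstruct \emph{Abel's} argument using only tools from the memoir: the paper starts from $f(x_i,y_i)\,dx_i = -R(x_i)/(\theta_1(x_i)F'(x_i))$, divides $R(x)$ by $\theta_1(x)$ to split off $R_2(x)$ and $R_3(x)$, and then invokes two partial-fraction lemmas — Lemma \ref{Abel1}, which recovers the $\text{\large$\varPi$}$ term from a series expansion of $1/F(\alpha)$, and Lemma \ref{Abel2}, a partial-fraction identity for $R_3(x)/\theta_1(x)$ proved with the generalized product rule — together with the classical decomposition $1/F(\beta) = \sum_i 1/((\beta-x_i)F'(x_i))$; the Global Residue Theorem is never used. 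Your derivation is shorter and is essentially the observation already made in Section \ref{AEx} when computing $\res_{\beta_j}(S(x))$ in examples; what it buys is economy, but what it gives up is the content the appendix exists to exhibit, namely that Abel obtained the formula by elementary partial-fraction manipulations without any residue machinery. As a self-contained verification of the statement, though, your proof is complete and sound.
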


\begin{remark}
\label{Thm81rmk}
Section \ref{rewrite} will explain how to transform Abel's formula (34) in Figure \ref{foureqn} into 
Theorem \ref{AThm}.
\end{remark}

Abel makes extensive use of partial fraction decompositions; residues play a very minor role.  
At the heart of Abel's proof are two preliminary results that are unnumbered equations in \cite{A1}.  To highlight their importance, we state them as lemmas, whose proofs will be deferred to the end of the section.  

\begin{lemma}
\label{Abel1}
Recall that $F(x)$ has roots $x_1,\dots,x_\mu$, all of multiplicity one by \eqref{Assume1}.  Then, for any polynomial $F_1(x)$,
\[
\sum_{i=1}^\mu \frac{F_1(x_i)}{F'(x_i)} = \text{\large$\varPi$}\Big(\frac{F_1(x)}{F(x)}\Big).
\]
\end{lemma}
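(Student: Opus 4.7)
The plan is to deduce this as a direct application of the Global Residue Theorem \eqref{GRT} applied to the rational differential form $\omega = \frac{F_1(x)}{F(x)}\,dx$, essentially the same strategy used in the proof of Corollary \ref{BooleCor}.

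First, I would identify the possible poles of $\omega$ on $\PP^1$. Since $F(x)$ has simple roots $x_1,\dots,x_\mu$ by assumption \eqref{Assume1}, and $F_1(x)$ is a polynomial, the only possible poles of $\omega$ are $x_1,\dots,x_\mu$ and $\infty$. At each $x_i$, formula \eqref{computeres1} gives
\[
\res_{x_i}(\omega) = \res_{x_i}\Big(\frac{F_1(x)}{F(x)}\Big) = \frac{F_1(x_i)}{F'(x_i)},
\]
and at infinity, \eqref{PiRes} gives $\res_\infty(\omega) = -\text{\large$\varPi$}\big(\frac{F_1(x)}{F(x)}\big)$.

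Applying \eqref{GRT} then yields
\[
0 = \sum_{i=1}^\mu \frac{F_1(x_i)}{F'(x_i)} - \text{\large$\varPi$}\Big(\frac{F_1(x)}{F(x)}\Big),
\]
which is the claimed identity. There is no real obstacle here; this is a one-line consequence of the tools already developed in Section \ref{APW}. The only subtlety worth flagging is that $F_1(x)$ is allowed to have arbitrary degree, so one must genuinely include the $\text{\large$\varPi$}$ term to account for the possible pole at $\infty$ (when $\deg F_1 \ge \deg F - 1$); the case $\deg F_1 \le \deg F - 2$ recovers the second identity of Corollary \ref{BooleCor}, where the $\text{\large$\varPi$}$ term vanishes.

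An alternative that avoids invoking the Global Residue Theorem would be to write the partial fraction decomposition $\frac{F_1(x)}{F(x)} = G(x) + \sum_{i=1}^\mu \frac{F_1(x_i)/F'(x_i)}{x-x_i}$ and extract the coefficient of $1/x$ in the expansion at infinity: the polynomial part $G(x)$ contributes nothing, and each simple fraction $\frac{A_i}{x-x_i}$ expands as $\frac{A_i}{x} + \frac{A_i x_i}{x^2} + \cdots$, contributing $A_i = F_1(x_i)/F'(x_i)$ to $\text{\large$\varPi$}$. This is closer in spirit to Abel's own methods, but the residue argument above is cleaner given the machinery already assembled.
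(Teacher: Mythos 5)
Your main argument is correct, and it is in fact exactly the remark with which the paper opens its own proof of Lemma \ref{Abel1}: the identity is an immediate consequence of \eqref{computeres1}, \eqref{PiRes}, and the Global Residue Theorem \eqref{GRT} applied to $\frac{F_1(x)}{F(x)}\,dx$, just as in Corollary \ref{BooleCor}. However, the proof the paper actually displays is deliberately different, because the point of Section \ref{SecRoweAbel} is to reproduce Abel's own residue-free reasoning: the paper starts from the classical partial fraction decomposition \eqref{n89}, $\frac1{F(\alpha)} = \sum_{i=1}^\mu \frac1{(\alpha-x_i)F'(x_i)}$, expands each $\frac1{\alpha-x_i}$ as a geometric series in $\frac1\alpha$, reads off that $\sum_{i=1}^\mu \frac{x_i^m}{F'(x_i)}$ is the coefficient of $\frac1{\alpha^{m+1}}$ in $\frac1{F(\alpha)}$, i.e.\ $\text{\large$\varPi$}\big(\frac{x^m}{F(x)}\big)$, and then concludes by linearity in $F_1$; this is also precisely where Abel introduces the operator {\large$\varPi$}. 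Your second, ``alternative'' sketch is very close to this in spirit, except that you decompose $\frac{F_1(x)}{F(x)}$ directly (polynomial part plus simple fractions) rather than decomposing $\frac1{F}$ and handling monomials $x^m$ separately; either variant is fine. So: the residue argument buys brevity and fits the modern machinery of Section \ref{APW}, while the paper's partial-fraction argument buys historical fidelity, showing the lemma can be (and was) proved with tools available to Abel. Your flag about the {\large$\varPi$} term being genuinely needed when $\deg F_1 \ge \deg F - 1$, and its vanishing recovering the second identity of Corollary \ref{BooleCor}, is accurate.
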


\begin{lemma}
\label{Abel2}
Suppose $\theta_1(x)$ has roots $\beta_1,\dots,\beta_\alpha$ with multiplicities $\nu_1,\dots,\nu_\alpha$.  If $R_3(x)$ is a polynomial with $\deg(R_3(x)) < \deg(\theta_1(x))$ and $\beta$ is a new variable, then
\[
\frac{R_3(x)}{\theta_1(x)} = \sum_{j=1}^\alpha \frac1{(\nu_j-1)!} \bigg(\frac{d^{\nu_j-1}}{d\beta^{\nu_j-1}} \frac{(\beta- \beta_j)^{\nu_j} R_3(\beta)}{(x-\beta)\theta_1(\beta)} \bigg)\Big|_{\beta=\beta_j}.
\]
\end{lemma}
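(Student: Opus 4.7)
The plan is to recognize the right-hand side as a sum of residues with respect to a dummy variable $\beta$ and then deduce the identity from the Global Residue Theorem \eqref{GRT}. Treating $x$ as a parameter, I introduce the rational differential form in $\beta$,
\[
\omega(\beta) \;=\; \frac{R_3(\beta)}{(x-\beta)\,\theta_1(\beta)}\,d\beta.
\]
Because $\beta_j$ is a root of $\theta_1(\beta)$ of multiplicity exactly $\nu_j$ (and $x\neq\beta_j$ generically), formula \eqref{computeres} with $\nu=\nu_j$ tells us that
\[
\res_{\beta=\beta_j}(\omega) \;=\; \frac{1}{(\nu_j-1)!}\left(\frac{d^{\nu_j-1}}{d\beta^{\nu_j-1}}\,\frac{(\beta-\beta_j)^{\nu_j}\,R_3(\beta)}{(x-\beta)\,\theta_1(\beta)}\right)_{\beta=\beta_j},
\]
which is precisely the $j$th summand in the statement. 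Hence the lemma will follow once I establish $\sum_{j=1}^\alpha \res_{\beta=\beta_j}(\omega) = R_3(x)/\theta_1(x)$.

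Next I account for the remaining poles of $\omega$ on $\PP^1_\beta$, namely $\beta=x$ and $\beta=\infty$. At $\beta=x$, which is a simple pole, \eqref{computeres1} (applied after writing $1/(x-\beta)=-1/(\beta-x)$) gives
\[
\res_{\beta=x}(\omega) \;=\; -\,\frac{R_3(x)}{\theta_1(x)}.
\]
At $\beta=\infty$, the degree hypothesis $\deg R_3 < \deg \theta_1$ combined with the extra factor $(x-\beta)$ in the denominator forces the expansion of $R_3(\beta)/\bigl((x-\beta)\theta_1(\beta)\bigr)$ in descending powers of $\beta$ to begin no higher than $\beta^{-2}$, so its coefficient of $\beta^{-1}$ vanishes; then \eqref{PiRes} yields $\res_\infty(\omega)=0$. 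The Global Residue Theorem \eqref{GRT} therefore gives
\[
0 \;=\; \sum_{j=1}^\alpha \res_{\beta=\beta_j}(\omega) \;+\; \res_{\beta=x}(\omega) \;+\; \res_\infty(\omega) \;=\; \sum_{j=1}^\alpha \res_{\beta=\beta_j}(\omega) \;-\; \frac{R_3(x)}{\theta_1(x)},
\]
which rearranges to the asserted identity.

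The only subtlety I foresee is that the Global Residue Theorem is being applied to a form in $\beta$ under the tacit assumption $x\notin\{\beta_1,\dots,\beta_\alpha\}$; but since both sides of the claimed identity are rational functions of $x$, equality on this Zariski-open set extends automatically to an equality of rational functions. Thus no genuine obstacle arises, and the lemma is essentially a repackaging of the classical partial fraction expansion of $R_3(x)/\theta_1(x)$ into residue form, in the same spirit as Proposition \ref{AbelPF1}.
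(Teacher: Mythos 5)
Your proof is correct, but it takes a different route from the one in the paper. You observe that each summand on the right is exactly $\res_{\beta=\beta_j}\bigl(\tfrac{R_3(\beta)}{(x-\beta)\theta_1(\beta)}\,d\beta\bigr)$ by \eqref{computeres} (valid even if $R_3(\beta_j)=0$, since \eqref{computeres} only needs $\nu_j$ at least the pole order, as in Remark \ref{nui}), and then you invoke the Global Residue Theorem \eqref{GRT} in the auxiliary variable $\beta$: the simple pole at $\beta=x$ contributes $-R_3(x)/\theta_1(x)$, the degree hypothesis kills the residue at $\infty$, and the identity follows, first for generic $x$ and then as rational functions. The paper instead reproduces Abel's own argument, which avoids residues entirely: it starts from the explicit partial fraction decomposition $\frac{R_3(x)}{\theta_1(x)}=\sum_j\sum_k \frac{A_j^{(k)}}{(x-\beta_j)^k}$ with the classical derivative formula for $A_j^{(k)}$, writes $\frac{1}{(x-\beta_j)^k}$ as a derivative of $q(\beta)=\frac{1}{x-\beta}$, and collapses the inner sum over $k$ into a single $(\nu_j-1)$st derivative of $p(\beta)q(\beta)$ via the generalized Leibniz rule. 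Your argument is shorter and fits the residue-theoretic framework of Sections \ref{Abeldv}--\ref{Abelv} (indeed the paper notes after its proof that the summands are residues, in the spirit of Proposition \ref{AbelPF1}); what it gives up is the point of Appendix \ref{AbelPf}, namely to show how Abel could prove the formula with only partial fractions and no residue calculus, and it requires the small extra step you flag of passing from generic complex $x$ to an identity of rational functions, which Abel's purely algebraic manipulation never needs.
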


Abel's versions of these lemmas appear on pages 155 and 157 of \cite{A1}.  His statement of Lemma \ref{Abel2} is marred by the same error noted in \eqref{Abel36}.   

Assuming Lemmas \ref{Abel1} and \ref{Abel2}  for now, here is Abel's proof of Theorem \ref{AThm}.
 
\begin{proof}[Proof of Theorem \ref{AThm}]
We begin with
\[
f(x_i,y_i)\,dx_i = -\res_{x_i}(S(x)) = -\res_{x_i}\Big( \frac{R(x)}{\theta_1(x) F(x) }\Big) = - \frac{R(x_i)}{\theta_1(x_i) F'(x_i) }.
\]
The first equality is Lemma \ref{fxiyiSxi}, the second uses \eqref{Sxtheta1}, and the third follows from \eqref{computeres1} since $\theta_1(x_i) = f_2(x_i)F_0(x_i) \ne 0$.  Thus
\begin{equation}
\label{n86}
\sum_{i=1}^\mu f(x_i,y_i)\,dx_i = -\sum_{i=1}^\mu \frac{R(x_i)}{\theta_1(x_i) F'(x_i) }.
\end{equation}
This is equation (22) of \cite[p.\ 153]{A1}.  If you take the proof of \eqref{n86} just given and strip away everything to do with residues, what remains is entirely due to Abel.

Now divide $R(x)$ by $\theta_1(x)$ to get polynomial differential forms $R_2(x)$ and $R_3(x)$ such that
\[
\frac{R(x)}{\theta_1(x)} = R_2(x) + \frac{R_3(x)}{\theta_1(x)}, \ \ \deg(R_3(x)) < \deg(\theta_1(x)). 
\]
Then \eqref{n86} becomes
\begin{equation}
\label{n87}
\sum_{i=1}^\mu f(x_i,y_i)\,dx_i = -\sum_{i=1}^\mu \frac{R_2(x_i)}{F'(x_i) }  -\sum_{i=1}^\mu \frac{R_3(x_i)}{\theta_1(x_i) F'(x_i) }.
\end{equation}
By Lemma \ref{Abel1}, the first sum on the right-hand side is
\begin{equation}
\label{n88}
\sum_{i=1}^\mu \frac{R_2(x_i)}{F'(x_i) } = \text{\large$\varPi$}\Big(\frac{R_2(x)}{F(x) }\Big) =  \text{\large$\varPi$}\Big(\frac{R_2(x)}{F(x)} + \frac{R_3(x)}{\theta_1(x) F(x) }\Big)= \text{\large$\varPi$}\Big(\frac{R(x)}{\theta_1(x) F(x)}\Big).
\end{equation}
The second equality follows from $\deg(R_3(x)) < \deg(\theta_1(x))$ and $0 < \deg(F(x))$.  

For the second sum on the right of \eqref{n87}, observe that by Lemma \ref{Abel2},
\begin{align*}
\frac{R_3(x_i)}{\theta_1(x_i) F'(x_i)}  &= \frac1{F'(x_i)}  \frac{R_3(x_i)}{\theta_1(x_i)}\\ &=  \frac1{F'(x_i)}  
\sum_{j=1}^\alpha \frac1{(\nu_j-1)!} \bigg(\frac{d^{\nu_j-1}}{d\beta^{\nu_j-1}} \frac{(\beta- \beta_j)^{\nu_j} R_3(\beta)}{(x_i-\beta)\theta_1(\beta)} \bigg)\Big|_{\beta=\beta_j} \\
&= \sum_{j=1}^\alpha \frac1{(\nu_j-1)!} \bigg(\frac{d^{\nu_j-1}}{d\beta^{\nu_j-1}} \frac{(\beta- \beta_j)^{\nu_j} R_3(\beta)}{\theta_1(\beta)} \cdot \frac1{ (x_i-\beta) F'(x_i)}    \bigg)\Big|_{\beta=\beta_j}.
\end{align*}
When we add this up for $i = 1,\dots,\mu$, the quantity inside the large parentheses is
\[
\frac{d^{\nu_j-1}}{d\beta^{\nu_j-1}} \frac{(\beta- \beta_j)^{\nu_j} R_3(\beta)}{\theta_1(\beta)} \sum_{i=1}^\mu \frac1{ (x_i-\beta) F'(x_i)}.
\]
However, we have a classic partial fraction decomposition known to Abel:
\begin{equation}
\label{n89}
\frac1{F(\beta)} = \sum_{i=1}^\mu \frac1{ (\beta - x_i) F'(x_i)} = - \sum_{i=1}^\mu \frac1{ (x_i-\beta) F'(x_i)}.
\end{equation}
This holds because $F(x)$ has simple roots by assumption \eqref{Assume1}.  It follows that
\begin{align*}
\sum_{i=1}^\mu \frac{R_3(x_i)}{\theta_1(x_i) F'(x_i) } &= -\sum_{j=1}^\alpha \frac1{(\nu_j-1)!} \bigg(\frac{d^{\nu_j-1}}{d\beta^{\nu_j-1}} \frac{(\beta- \beta_j)^{\nu_j} R_3(\beta)}{\theta_1(\beta) F(\beta)} \bigg)\Big|_{\beta=\beta_j}\\
&= -\sum_{j=1}^\alpha \frac1{(\nu_j-1)!} \bigg(\frac{d^{\nu_j-1}}{dx^{\nu_j-1}} \frac{(x- \beta_j)^{\nu_j} R_3(x)}{\theta_1(x) F(x)} \bigg)\Big|_{x=\beta_j}.
\end{align*}
The second line follows from the first by replacing the variable $\beta$ with the variable $x$.  However, in this formula, we can replace $R_3(x)$ with $R(x) = R_3(x) + \theta_1(x) R_2(x) $ because
\[
\bigg(\frac{d^{\nu_j-1}}{dx^{\nu_j-1}} \frac{(x- \beta_j)^{\nu_j}  \theta_1(x) R_2(x)}{\theta_1(x) F(x)} \bigg)\Big|_{x=\beta_j}
\! = \bigg(\frac{d^{\nu_j-1}}{dx^{\nu_j-1}} (x- \beta_j)^{\nu_j} \frac{R_2(x)}{F(x)} \bigg)\Big|_{x=\beta_j} \! = 0
\]
since $\beta_j$ is not a root of $F(x)$.  Thus
\begin{equation}
\label{n88a}
\sum_{i=1}^\mu \frac{R_3(x_i)}{\theta_1(x_i) F'(x_i) } = -\sum_{j=1}^\alpha \frac1{(\nu_j-1)!} \bigg(\frac{d^{\nu_j-1}}{dx^{\nu_j-1}} \frac{(x- \beta_j)^{\nu_j} R(x)}{\theta_1(x) F(x)} \bigg)\Big|_{x=\beta_j}.
\end{equation}

We are now done, since  
\begin{align*}
\sum_{i=1}^\mu f(x_i,y_i)\,dx_i &= -\sum_{i=1}^\mu \frac{R_2(x_i)}{F'(x_i) }  -\sum_{i=1}^\mu \frac{R_3(x_i)}{\theta_1(x_i) F'(x_i) }\\
&= - \text{\large$\varPi$}\Big(\frac{R(x)}{\theta_1(x) F(x)}\Big) + \sum_{j=1}^\alpha \frac1{(\nu_j{-}1)!}  \bigg( \frac{d^{\nu_j-1}}{dx^{\nu_j-1}}\frac{(x{-} \beta_j)^{\nu_j} R(x)}{\theta_1(x)F(x)} \bigg)\Big|_{x=\beta_j},
\end{align*}
where the first line is \eqref{n87} and the second line follows from  \eqref{n88} and \eqref{n88a}.
\end{proof}

It remains to give Abel's proofs of Lemmas \ref{Abel1} and \ref{Abel2}.   As promised, partial fractions feature prominently in both proofs.  

\begin{proof}[Proof of Lemmma \ref{Abel1}] For us, this is an easy consequence of \eqref{computeres1} and the Global Residue Theorem (apply \eqref{GRT} to $\frac{F_1(x)}{F(x)}\hskip1pt dx$).  Abel's proof is quite different.  He begins with the partial fraction decompostion \eqref{n89} where $\beta$ is replaced by $\alpha$ and then expands $\frac1{\alpha - x_i}$ as a power series in $\frac{1}{\alpha}$.  This gives
\[
\frac1{F(\alpha)} = \sum_{i=1}^\mu \frac1{ (\alpha - x_i) F'(x_i)} = \sum_{i=1}^\mu \!\bigg(\!\sum_{m=0}^\infty \frac{x_i^m}{\alpha^{m+1}}\!\bigg)\! \frac1{F'(x_i)} = \sum_{m=0}^\infty \!\bigg(\!\sum_{i=1}^\mu \frac{x_i^m}{F'(x_i)}\!\bigg)\! \frac1{\alpha^{m+1}}.
\]
Here is what Abel says next \cite[p.\ 155]{A1}:
\begin{quote}
\dots\ it follows that $\sum \frac{x^m}{F'(x)}$ [$= \sum_{i=1}^\mu \frac{x_i^m}{F'(x_i)}$] is equal to the coefficient of $\frac{1}{\alpha^{m+1}}$ in the development of the function $\frac1{F(\alpha)}$, or, what amounts to the same, to that of $\frac1\alpha$ in the development of $\frac{\alpha^m}{F(\alpha)}$.  By designating therefore by $\text{\large$\varPi$}F_1x$ the coefficient of $\frac1x$ in the development of any function $F_1x$ according to the descending powers of $x$, one will have
\[
\sum \frac{x^m}{F'(x)} = \text{\large$\varPi$}\frac{x^m}{F(x)}.
\]
\end{quote}
The last sentence is where Abel defines {\large$\varPi$}.  The $\frac1\alpha$ and $\frac1x$  in this quote are the only places where a residue (here a residue at $\infty$) appears explicitly in his Paris memoir.

The upshot is that Abel has proved that
\[
\sum_{i=1}^\mu \frac{x_i^m}{F'(x_i)} = \text{\large$\varPi$}\Big(\frac{x^m}{F(x)}\Big)
\]
for any integer $m \ge 0$.  From here, Lemma \ref{Abel1} follows by linearity.
\end{proof}

We now turn to Lemma \ref{Abel2}, which again uses partial fractions.

\begin{proof}[Proof of Lemma \ref{Abel2}] 
Since $\deg(R_3(x)) < \deg(\theta_1(x))$, we have a partial fraction expansion
\[
\frac{R_3(x)}{\theta_1(x)} = \sum_{j=1}^\alpha \sum_{k = 1}^{\nu_j} \frac{A_j^{(k)}}{(x-\beta_j)^k},
\]
where it is well known that
\[
A_j^{(k)} = \frac{1}{(\nu_j - k)!} \bigg(\frac{d^{\nu_j-k}p(x)}{dx^{\nu_j-k}}\bigg)\Big|_{x = \beta_j}, \ p(x) = (x - \beta_j)^{\nu_j} \frac{R_3(x)}{\theta_1(x)}
\]
(see, for example, \cite[(7-84) on p.\ 175]{V}).  Following Abel \cite[p.\ 156]{A1}, we switch from $x$ to $\beta$ and write
\begin{equation}
\label{Aerror}
A_j^{(k)} = \frac{1}{(\nu_j - k)!} \bigg(\frac{d^{\nu_j-k}p(\beta)}{d\beta^{\nu_j-k}}\bigg)\Big|_{\beta = \beta_j}, \ p(\beta) = (\beta - \beta_j)^{\nu_j} \frac{R_3(\beta)}{\theta_1(\beta)}.
\end{equation}
Abel then sets
\[
q(\beta) = \frac1{x-\beta}
\]
and observes that
\[
\frac1{(x-\beta_j)^k} = \frac1{(k-1)!} \bigg(\frac{d^{k-1}q(\beta)}{d\beta^{k-1}}\bigg)\Big|_{\beta = \beta_j}.
\]
Hence
\begin{align*}
\sum_{k = 1}^{\nu_j} \frac{A_j^{(k)}}{(x-\beta_j)^k} &= \bigg(\sum_{k = 1}^{\nu_j} \frac1{(\nu_j - k)!} \frac{d^{\nu_j-k}p(\beta)}{d\beta^{\nu_j-k}} \cdot \frac1{(k-1)!} \frac{d^{k-1}q(\beta)}{d\beta^{k-1}}\bigg)\Big|_{\beta = \beta_j}\\
&= \frac{1}{(\nu_j-1)!} \bigg(\sum_{k = 1}^{\nu_j} \binom{\nu_j-1}{k-1} \frac{d^{\nu_j-k}p(\beta)}{d\beta^{\nu_j-k}} \cdot  \frac{d^{k-1}q(\beta)}{d\beta^{k-1}}\bigg)\Big|_{\beta = \beta_j}.
\end{align*}
By the generalized product rule, the quantity inside the large parentheses is
\[
\frac{d^{\nu_j-1}}{d\beta^{\nu_j-1}} p(\beta)q(\beta) = \frac{d^{\nu_j-1}}{d\beta^{\nu_j-1}} \frac{(\beta- \beta_j)^{\nu_j} R_3(\beta)}{(x-\beta)\theta_1(\beta)}.
\]
Putting everything together, Abel obtains the desired formula
\[
\frac{R_3(x)}{\theta_1(x)} = \sum_{j=1}^\alpha \sum_{k = 1}^{\nu_j} \frac{A_j^{(k)}}{(x-\beta_j)^k} = \sum_{j=1}^\alpha  \frac{1}{(\nu_j-1)!} \bigg(\frac{d^{\nu_j-1}}{d\beta^{\nu_j-1}} \frac{(\beta- \beta_j)^{\nu_j} R_3(\beta)}{(x-\beta)\theta_1(\beta)} \bigg)\Big|_{\beta = \beta_j}.\qedhere
\]
\end{proof}

\begin{remark}
\label{AbelErrorRmk}
In the course of this proof, Abel introduces the error that reappears multiple times in \cite{A1}. Specifically, he gives the correct formula for $p = p(\beta)$ in \eqref{Aerror} but follows with the formula
\begin{equation}
\label{wrongp}
p = \frac{\Gamma(\nu+1) R_3\beta}{\theta_1^{(\nu)}\beta},
\end{equation}
where $\Gamma(\nu+1)$ is ``the product $1.2.3 \dots (\nu-1).\nu$'' (this is how Abel writes factorials in \cite{A1}), and $\theta_1^{(\nu)}(x)$ is ``the $\nu$\textsuperscript{th} derivative of the function $\theta_1 x$ with respect to $x$" (this is incorrect)  \cite[p.\ 156]{A1} .  We will say more about Abel's error in Section \ref{rewrite}.
\end{remark}

Lemma \ref{Abel2} is quite remarkable, especially when we recognize that 
\[
\frac{1}{(\nu_j-1)!} \bigg(\frac{d^{\nu_j-1}}{d\beta^{\nu_j-1}} \frac{(\beta- \beta_j)^{\nu_j} R_3(\beta)}{(x-\beta)\theta_1(\beta)} \bigg)\Big|_{\beta = \beta_j} \!\!= \res_{\beta =\beta_j}\Big(\frac{R_3(\beta)}{(x-\beta)\theta_1(\beta)}\Big)
\]
by \eqref{computeres}.  Hence the lemma shows that
\begin{align*}
\sum_{k = 1}^{\nu_j} \frac{A_j^{(k)}}{(x{-}\beta_j)^k} &= \frac{1}{(\nu_j{-}1)!} \bigg(\frac{d^{\nu_j-1}}{d\beta^{\nu_j-1}} \frac{(\beta{-} \beta_j)^{\nu_j} R_3(\beta)}{(x{-}\beta)\theta_1(\beta)} \bigg)\Big|_{\beta = \beta_j}\!\!
= \res_{\beta =\beta_j}\Big(\frac{R_3(\beta)}{(x{-}\beta)\theta_1(\beta)}\Big),
\end{align*}
which is a version of \eqref{Abel32} from Proposition \ref{AbelPF1}. In Section \ref{SecRational}, partial fractions arose naturally in a special case of Abel's formula.  Here, Abel's proof of Lemma~\ref{Abel2} shows that the connection with partial fractions goes much deeper.  They are an essential tool in his proof.

\begin{remark}
\label{nui}
The reader may note that in the above formulas, $\nu_j$ is the order of $\beta_j$ as a root of $\theta_1(\beta)$, which may be strictly greater than the order of $\beta_j$ as pole of $\frac{R_3(\beta)}{(x-\beta)\theta_1(\beta)}$ since we do not assume that $\theta_1(\beta)$ is relatively prime to $R_3(\beta)$.  Fortunately, this is harmless.  When $\nu_j$ is strictly greater than the order of $\beta_j$ as a pole, \eqref{computeres} and \eqref{Aerror} continue to be valid, so  everything still works. 
\end{remark}

This completes our discussion of Abel's proof.  His methods are elementary and use partial fractions instead of residues.  Nevertheless, his proof displays the wonderful ingenuity that is so typical of Abel.

\subsection{Abel's Original Formula for \emph{dv}}
\label{rewrite}
Recall Abel's equations (34), (35), and (36) from Figure \ref{foureqn}.
The first task of this section is to explain how Abel's formula  (36) becomes the modern version
\begin{equation}
\label{NA1}
dv = {-} \text{\large$\varPi$}\Big( \sum_{\ell=1}^n f(x,y^{(\ell)}) \hskip1pt\frac{\delta \theta(y^{(\ell)})}{\theta(y^{(\ell)})} \Big) +\sum_{j=1}^\alpha \res_{\beta_j}\hskip-1pt \Big(\sum_{\ell=1}^n f(x,y^{(\ell)}) \hskip1pt\frac{\delta \theta(y^{(\ell)})}{\theta(y^{(\ell)})}\Big) 
\end{equation}
from \eqref{ATModern} and \eqref{ATModernS}, which we proved in Theorem \ref{AThmdv}.  We also need to understand the error in (36) and how the modern version \eqref{NA1} corrects the error.  Our second task is to understand how (34) becomes the modern version proved in Theorem~\ref{AThm}.

In the previous section, we showed that (34) leads to (36) via (35).  The challenge is that  (34), (35), and (36) contain a \emph{lot} of notation that needs to be explained.  We have some work to do.

We already know $F(x)$, $f_1(x,y)$, $\chi'(y)$, $\theta(y)$, and $\delta\theta(y)$.  The next step is to untangle $R_1(x)$, $F_2(x)$, $\theta_1(x)$, $\mu_i$, and $k_i$. (We ignore $\theta_1^{(\nu)}(x)$ for now since this is where the error occurs.)  We turn to equation (23) on \cite[p.\ 153]{A1}, which says
\begin{equation}
\label{Abel23}
\begin{aligned}
F_0(x) &= (x-\beta_1)^{\mu_1}\cdots (x-\beta_\alpha)^{\mu_\alpha}\\
f_2(x) &= (x-\beta_1)^{m_1}\cdots (x-\beta_\alpha)^{m_\alpha} A\\
R(x) &= (x-\beta_1)^{k_1}\cdots (x-\beta_\alpha)^{k_\alpha} R_1(x).
\end{aligned} 
\end{equation}
Here, $F_0(x), f_2(x), R(x)$ have the same meaning as for us.  Also, $A$ is a constant and $\mu_i, m_i,k_i$ are nonnegative integers.  Then, on p.\ 154, Abel defines $\nu_i = \mu_i+ m_i -k_i$ and sets
\[
\theta_1(x) =  A(x-\beta_1)^{\nu_1}\cdots (x-\beta_\alpha)^{\nu_\alpha}.
\]
In \cite[Vol.\ 2, pp.\ 294--295]{A5}, Lie and Sylow make some useful remarks about \eqref{Abel23}. Most relevant for us is their pointer to \cite[p.\ 160]{A1}, where Abel discusses a situation where $k_1 = \cdots = k_\alpha =0$ and adds the parenthetical comment  ``(one can make the same supposition in all cases).''  This is true because the $\beta_j$ in \eqref{Abel23} are the roots of $f_2(x)F_0(x)$, yet the key player is $S(x) = \frac{R(x)}{f_2(x)F_0(x) F'(x)}$ from Lemma \ref{AbelLem}.  Abel uses the $k_i$ to cancel common factors of $R(x)$ and $f_2(x)F_0(x)$.  But cancellation is not necessary, which allows us to set $k_1 = \cdots = k_\alpha =0$.  As indicated in footnote~\ref{f6}, we will do so.  This has several nice consequences, including 
\begin{equation}
\label{NiceCons}
\begin{aligned}
R_1(x) &= R(x)\\
\theta_1(x) &= f_2(x) F_0(x) = (x-\beta_1)^{\nu_1}\cdots (x-\beta_\alpha)^{\nu_\alpha}.
\end{aligned}
\end{equation}
This explains why we could set $\theta_1(x) = f_2(x) F_0(x)$ in Section \ref{SecRoweAbel}.  Looking back at (35) in Figure~\ref{foureqn}, we see that $k_1 = \cdots = k_\alpha =0$ also implies that $F_2(x) = F_0(x)$.

Before we can transform Abel's original formula
\[
dv =  {-}\text{\large$\varPi$} \frac{F_2x}{\theta_1x} \sum \frac{f_1(x,y)}{\chi'y} \frac{\delta \theta y}{\theta y}
+ \sum{}\rule{0pt}{12pt}'\nu \frac{d^{\nu-1}}{dx^{\nu-1}} \bigg\{\frac{F_2x}{\theta_1^{(\nu)}x} \sum \frac{f_1(x,y)}{\chi'y} \frac{\delta \theta y}{\theta y}\bigg\},
\]
there are three further things to explain:

\smallskip

\noindent {\scshape First}:\ The summations in the formula can be explained as follows:
\begin{itemize}
\item The ordinary sum $\sum$ appears twice, and as we have seen, it is the sum for $\ell = 1,\dots,n$ over the roots $y^{(\ell)}$ of $\chi(x,y)$.
\item The sum $\sum{}\rule{0pt}{8pt}'$ is explained in (34) in Figure \ref{foureqn}:\ it is the sum for $j = 1,\dots,\alpha$ over the roots $\beta_j$ of $\theta_1(x) = f_2(x)F_0(x)$, and $\nu$ is really $\nu_j$, which by \eqref{NiceCons} is the multiplicity of $\beta_j$ as a root of $f_2(x)F_0(x)$.  
\end{itemize}

\smallskip

\noindent {\scshape Second}:\  The large $\big\{$ and $\big\}$ in the  formula are easy to explain: they indicate that the derivative is evaluated at $x = \beta_j$.

\smallskip

\noindent {\scshape Third}:\ Finally, we explain $\theta_1^{(\nu)}x$.  As noted in Remark \ref{AbelErrorRmk}, Abel's description of $\theta_1^{(\nu)}(x )$ as the $\nu$\textsuperscript{th} derivative of $\theta_1(x)$ is incorrect.  The correct description is easy to find, for as we saw in \eqref{Aerror} and \eqref{wrongp},
\[
p(\beta) =(\beta-\beta_j)^{\nu_j}  \frac{R_3(\beta)}{\theta_1(\beta)} \ \text{ and } \ p(\beta) = \frac{\nu_j! \hskip1pt R_3(\beta)}{\theta_1^{(\nu_j)}(\beta)}.
\]
Solving for $\theta_1^{(\nu_j)}(\beta)$ gives the correction suggested in \cite[Vol.\ 2, p.\ 295]{A5}, namely
\[
\theta_1^{(\nu_j)}(\beta) = \nu_j! \frac{\theta_1(\beta)}{(\beta- \beta_j)^{\nu_j}}, \ \text{i.e.,} \ \, \theta_1^{(\nu_j)}(x) = \nu_j! \frac{\theta_1(x)}{(x- \beta_j)^{\nu_j}} = \nu_j! \frac{f_2(x)F_0(x)}{(x- \beta_j)^{\nu_j}} 
\]

%Figure \ref{AbeltoModern} shows how to transform Abel's original formula for $dv$ into the modern version proved in Theorem \ref{AThmdv}:
We are have everything we need to transform Abel's original formula for $dv$ into the modern version proved in Theorem \ref{AThmdv}:
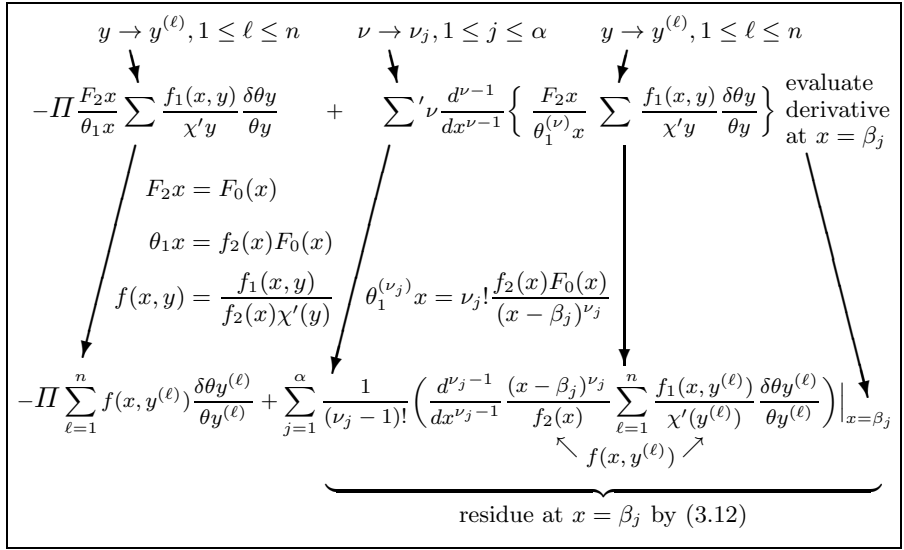
\begin{figure}[H]
\[
\framebox{
\begin{picture}(325,200)
\put(0,50){\footnotesize $\displaystyle \!\! {-}\text{\large$\varPi$}\sum_{\ell=1}^n f(x,y^{(\ell)}) \frac{\delta \theta y^{(\ell)}}{\theta y^{(\ell)}} + \sum_{j=1}^\alpha \frac1{(\nu_j-1)!} \bigg(\frac{d^{\nu_j-1}}{dx^{\nu_j-1}}\frac{(x-\beta_j)^{\nu_j}}{f_2(x)} \sum_{\ell=1}^n \frac{f_1(x,y^{(\ell)})}{\chi'(y^{(\ell)})} \frac{\delta \theta y^{(\ell)}}{\theta y^{(\ell)}}\bigg)\Big|_{x = \beta_j}$}
\put(0,160){\footnotesize \ \ $\displaystyle \!\!  {-}\text{\large$\varPi$} \frac{F_2x}{\theta_1x}  \sum \frac{f_1(x,y)}{\chi'y} \frac{\delta \theta y}{\theta y} \!\!\!\qquad\ +\qquad \!\!\!\sum\strut' \nu \frac{d^{\nu-1}}{dx^{\nu-1}} \bigg\{\ \frac{F_2x}{\theta_1^{(\nu)}x} \  \sum  \,\frac{f_1(x,y)}{\chi'y} \frac{\delta \theta y}{\theta y}\bigg\} $}
\put(28,190){\small $y \to y^{(\ell)}, 1 \le \ell \le n$}
\put(218,190){\small $y \to y^{(\ell)}, 1 \le \ell \le n$}
\put(126,190){\small $\nu \to \nu_j, 1 \le j \le \alpha$}
\put(45.7,130){\small $F_2x = F_0(x)$}
\put(47.2,110){\small $\theta_1x = f_2(x)F_0(x)$}
\put(34,89){\small $\displaystyle f(x,y) = \frac{f_1(x,y)}{f_2(x)\chi'(y)}$}
\put(129,89){\small $\displaystyle \theta_1^{(\nu_j)}x = \nu_j! \frac{f_2(x)F_0(x)}{(x-\beta_j)^{\nu_j}}$}
\put(288,170){\small evaluate}
\put(288,160){\small derivative}
\put(288,150){\small at $x = \beta_j$}
%\put(287,94){\small evaluate}
%\put(287,84){\small at $x = \beta_j$}
\put(201,29){\footnotesize $\text{\raisebox{4pt}{$\nwarrow$}} \ f(x,y^{(\ell)}) \  \hskip-.5pt \text{\raisebox{4pt}{$\nearrow$}}$}
\put(114,22){\small $\underbrace{\quad\hskip2.65in\quad}_{\text{\small residue at $x = \beta_j$ by \eqref{computeres}}}$}
\thicklines
\put(42,149){\vector(-1,-4){20}}
%\put(212,149){\vector(1,-4){20}}
\put(227,149){\vector(0,-1){83}}
\put(296,146){\vector(1,-4){23}}
%\put(276,109){\vector(1,-4){20}}
\put(138,149){\vector(-1,-4){21.5}}
%\put(44,145){\vector(0,-1){12}}
\put(40,185){\vector(1,-4){3}}
\put(138,185){\vector(1,-4){3}}
%\put(140,145){\vector(0,-1){12}}
%\put(214,145){\vector(0,-1){12}}
\put(228,185){\vector(-1,-4){3}}
\end{picture} }
\]
\caption{Transforming Abel's Formula for $dv$}
\label{AbeltoModern}
\end{figure}

In Figure \ref{AbeltoModern}, Abel's original formula (36) for $dv$ at the top.  Furthermore, when the expression at the bottom of the figure is expressed using residues, it becomes
\[
dv =  {-}\text{\large$\varPi$}\sum_{\ell=1}^n f(x,y^{(\ell)}) \frac{\delta \theta y^{(\ell)}}{\theta y^{(\ell)}} + 
\sum_{j=1}^\alpha \res_{\beta_j}\Big(\sum_{\ell=1}^n f(x,y^{(\ell)}) \frac{\delta \theta y^{(\ell)}}{\theta y^{(\ell)}} \Big),
\]
which is exactly the modern version \eqref{NA1} that we proved in Theorem~\ref{AThmdv}. We now understand Abel's original formula (36) for $dv$.  In a similar way, it is straightforward to transform (34) into the formula proved in Theorem \ref{AThm}.  We omit the details.

\subsection{Boole and Rowe} 
\label{BooleRowe}
Abel's Paris memoir contains a large number of powerful ideas that were studied after its publication in 1841.  In particular, the papers by Boole \cite{B} in 1857 and Rowe \cite{R} in 1881 considered Abel's formulas for $dv$ and $v$.  Their main contribution was to simplify his proofs by judicious use of residues.

The key feature of Boole's 1857 paper \cite{B} is the symbolic operator $\Theta$, which he introduces as follows:
\begin{quote}
As respects the methods and processes which will be employed in this paper, the only peculiarity to which it seems  necessary to direct attention, is the introduction of a symbol [the $\Theta$ just mentioned], differing in interpretation only by the addition of one element, from that which Cauchy has employed in his `Calculus of Residues'. \cite[pp.\ 745--746]{B}
\end{quote}
Boole's use of a symbol like $\Theta$ is not surprising since he was one of the creators of symbolic logic.  He was a firm believer in the power of symbolic operators.

The definition of $\Theta$ involves residues, including residues at $\infty$.  Section \ref{SecRoweAbel} gave Abel's definition of {\large$\varPi$} as the coefficient of $\frac1x$ is an expansion of descending powers of $x$.  In \cite{B}, Boole  uses $C_{\frac1x}$ in place of {\large$\varPi$}. This is  the ``addition of one element''  in the above quote.  Here is a modern version of Boole's definition of $\Theta$ \cite[p.\ 752]{B}.

\begin{definition}
\label{Thetadef}
Let $\varphi(x)$ and $f(x)$ be rational functions in $\LL(x)$, where $\LL$ is algebraically closed.  Then
\[
\Theta[\varphi(x)]f(x) \ = \!\!\!\sum_{\beta\in \LL\text{ pole of }\varphi(x)}\!\!\! \res_\beta(\varphi(x) f(x)) \ - \ C_{\frac1x} (\varphi(x)f(x)).
\]
\end{definition}

Boole then proved some basic properties of $\Theta$.  In particular, on \cite[p.\ 755]{B}, he showed that if 
 $\varphi(x)$ a rational function and $f(x)$ is a polynomial, then 
\begin{equation}
\label{BooleGRT}
\Theta[\varphi(x)]f(x) = 0.
\end{equation}
When applied to the constant polynomial $f(x)=1$, this implies the Global Residue Theorem for $\varphi(x)$.  Not surprisingly, Boole's proof used partial fractions, similar to the Serre's proof of \eqref{GRT} in \cite[Lemma 3 on p.\ 21]{Serre}.

Boole considered some special cases of Abel's formulas, but the general case had to wait for the 1881 paper \cite{R} of Rowe.  The symbol $\Theta$ enabled Rowe to restate Abel's formula \eqref{ATModern} more concisely.  Using $f(x,y) = \frac{f_1(x,y)}{f_2(x)\chi'(y)}$, note that
\begin{align*}
\frac{1}{f_2(x)F_0(x)} \cdot F_0(x) \sum_{\ell=1}^n \frac{f_1(x,y^{(\ell)})}{\chi'(y^{(\ell)})} \hskip1pt \frac{\delta \theta(y^{(\ell)})}{\theta(y^{(\ell)})} &=  \sum_{\ell=1}^n \frac{f_1(x,y^{(\ell)})}{f_2(x)\chi'(y^{(\ell)})} \hskip1pt \frac{\delta \theta(y^{(\ell)})}{\theta(y^{(\ell)})} \\
&= \sum_{\ell=1}^n f(x,y^{(\ell)}) \hskip1pt \frac{\delta \theta(y^{(\ell)})}{\theta(y^{(\ell)})}.
\end{align*}
Now recall the version of Abel's formula for $dv$ stated in \eqref{ATModern}:
\[
dv =  \sum_{j=1}^\alpha \res_{\beta_j}\hskip-1pt \Big(\sum_{\ell=1}^n f(x,y^{(\ell)}) \hskip1pt\frac{\delta \theta(y^{(\ell)})}{\theta(y^{(\ell)})}\Big) -  \text{\large$\varPi$}\Big( \sum_{\ell=1}^n f(x,y^{(\ell)}) \hskip1pt\frac{\delta \theta(y^{(\ell)})}{\theta(y^{(\ell)})} \Big),
\]
where the $\beta_j$ are the roots of $f_2(x) F_0(x)$, i.e,, the poles of $\frac{1}{f_2(x)F_0(x)}$.  By the definition of $\Theta$, it follows that \eqref{ATModern} is equivalent to the equation
\begin{equation}
\label{AT1}
dv =  \Theta\Big[\frac{1}{f_2(x)F_0(x)}\Big] \Big(\!F_0(x) \sum_{\ell=1}^n \frac{f_1(x,y^{(\ell)})}{\chi'(y^{(\ell)})} \hskip1pt \frac{\delta \theta(y^{(\ell)})}{\theta(y^{(\ell)})}\Big).
\end{equation}
Rowe assumes \eqref{Assume1} and gives a complete proof of \eqref{AT1} on p.\ 720 of \cite{B}.  His proof does not use the Global Residue Theorem, but it is definitely  in the background.  As noted in  Section \ref{Limits}, Forsyth \cite[pp.\ 579--586]{F} discusses Abel's formula for $v$.  His argument follows Rowe (including the use of $\Theta$) but takes place in a purely analytic context.

The papers of Boole and Rowe were the inspiration for these notes.  

\bibliographystyle{amsplain}

\end{document}